\pgfplotsset{compat=1.11}
\theoremstyle{plain} 
\newtheorem{thm}{Theorem}[section]
\newtheorem{prop}[thm]{Proposition}
\newtheorem{lem}[thm]{Lemma}
\theoremstyle{definition}
\newtheorem{defn}[thm]{Definition}
\theoremstyle{remark}
\newtheorem{rem}[thm]{Remark}
\numberwithin{equation}{section}
\numberwithin{figure}{section}
\newcommand{\DD}{{\rm D}}
\newcommand{\NN}{\mathbb{N}}
\newcommand{\PP}{\mathbb{P}}
\newcommand{\ZZ}{\mathbb{Z}}
\newcommand{\G}{{\overline{G}}}
\newcommand{\oH}{{\overline{\overline{G}}}}
\newcommand{\oA}{{{\overline{A}}}}
\newcommand{\oB}{{{\overline{B}}}}
\newcommand{\ord}{{\rm ord}}
\newcommand{\Gnu}{{\overline{\nu}}}
\newcommand{\oC}{\overline{C}}
\newcommand{\sI}{{\mathcal I}}
\newcommand{\pp} {{\mathfrak p}}
\newcommand{\sT} {{\mathscr T}}
\newcommand{\EE}{{\mathbb E}}
\definecolor {UMblue}  {RGB}{0, 39, 76}
\definecolor {UMmaize} {RGB}{255, 203, 5}
\definecolor {color_b}{RGB}{255,0,0}
\definecolor {color_c}{RGB}{20, 200, 30}
\definecolor {color_a}{RGB}{0,0,255}
\definecolor{lgreen} {RGB}{180,210,100}
\definecolor{dblue}  {RGB}{20,66,129}
\definecolor{ddblue} {RGB}{11,36,69}
\definecolor{lred}   {RGB}{220,0,0}
\definecolor{nred}   {RGB}{224,0,0}
\definecolor{norange}{RGB}{230,120,20}
\definecolor{nyellow}{RGB}{255,221,0}
\definecolor{ngreen} {RGB}{98,158,31}
\definecolor{dgreen} {RGB}{78,138,21}
\definecolor{nblue}  {RGB}{28,130,185}
\definecolor{jblue}  {RGB}{20,50,100}
\title[Products of extended binomial coefficients and their partial factorizations]{Products of extended binomial coefficients \\and their partial factorizations}
\author{Lara Du}
\address{Department of Mathematics, 
University of Michigan, Ann Arbor, MI 48109--1043, USA.
Current address: Draper Building, Berea College, 101 Chestnut Street, Berea, KY 40404, USA.}
\email{dul@berea.edu}
\author{Jeffrey Lagarias} 
\address{Department of Mathematics, University of Michigan, Ann Arbor, MI 48109--1043, USA.}
\email{lagarias@umich.edu}
\author {Wijit Yangjit}
\address{Department of Mathematics, University of Michigan, Ann Arbor, MI 48109--1043, USA.}
\email{yangjit@umich.edu}
\date{January 15, 2025, 11pt}
\begin{document}

\keywords{Binomial coefficients, Floor function, Sum of digits function, Running digit sum function}
\subjclass{11B65, 11B83, 11N37}

\begin{abstract}
This paper studies properties of the integer sequence $\overline{\overline{G}}_n=\prod_{k=0}^n\binom{n}{k}_{\mathbb{Z},\mathbb{N}}$ which is analogous to $\overline{G}_n=\prod_{k=0}^n\binom{n}{k}$, the product of the elements of the $n$-th row of Pascal's triangle. Here $\binom{n}{k}_{\mathbb{Z},\mathbb{N}}$ is an extended binomial coefficient, defined in the paper, constructed using an extended version of M. Bhargava's theory of generalized factorials. In 1996 M. Bhargava introduced a generalization of the factorial function, $n!_S=\prod_p\nu_n(S,p)$ in terms of their prime factorization, and defines associated binomial coefficients. The last two authors extended Bhargava's invariants further to define such invariants attached to each integer $b\ge2$. One obtains  extended factorials and extended binomial coefficients, and the maximal extension defines extended factorials $n!_{\mathbb{Z},\mathbb{N}}=\prod_{b\ge2}b^{\alpha_n(\mathbb{Z},b)}$ including all $b\ge2$, with associated extended binomial coefficients $\binom{n}{k}_{\mathbb{Z},\mathbb{N}}$, yielding $\overline{\overline{G}}_n$. We have $\overline{\overline{G}}_n=\prod_{b=2}^nb^{\overline{\nu}(n,b)}$ and the partial factorizations $\overline{\overline{G}}(n,x)=\prod_{b=2}^{\lfloor x\rfloor}b^{\overline{\nu}(n,b)}$. This paper shows $\log\overline{\overline{G}}(n,\alpha n)$ is well approximated by $f_{\overline{\overline{G}}}(\alpha)n^2\log n+g_{\overline{\overline{G}}}(\alpha)n^2$ as $n\to\infty$ for limit functions $f_{\overline{\overline{G}}}(\alpha)$ and $g_{\overline{\overline{G}}}(\alpha)$ defined for all $0\le\alpha\le1$. The remainder term has a power saving in $n$. The main results are deduced from study of functions $\overline{A}(n,x)$ and $\overline{B}(n,x)$ that encode statistics of the base $b$ radix expansions of the integer $n$ (and smaller integers), where the base $b$ ranges over all integers $2\le b\le x$. Unconditional estimates of $\overline{A}(n,x)$ and $\overline{B}(n,x)$ are derived.
\end{abstract}

\maketitle


%
%

\section{Introduction}

This paper studies 
extended factorials $n!_{\ZZ, \NN}$ and extended binomial coefficients $ \binom{n}{k}_{\ZZ, \NN}$,  
as defined below.
It studies the sequence given by the   product of extended binomial coefficients
\begin{equation}\label{eqn:extend-binom-prod}
\oH_n := \prod_{k=0}^n \binom{n}{k}_{\ZZ, \NN}.
\end{equation}
The extended binomial coefficients $\binom{n}{k}_{\ZZ, \NN}$
are a special case of  generalized  binomial coefficients
constructed using  an extension of Bhargava's theory of generalized factorials and
generalized binomial coefficients,  introduced  
by the last two authors in  \cite[Section 7]{LY:24a}.
(We  review Bhargava's theory and its extension in 
 Section \ref{subsubsec:212}.) 
We define
\begin{equation}
\binom{n}{k}_{\ZZ, \NN} := \frac{ n!_{\ZZ, \NN}}{k!_{\ZZ, \NN} (n-k)!_{\ZZ, \NN}},
\end{equation}
with the right side consisting of {\em extended factorials} $n!_{\ZZ, \NN}$.
The extended factorials are  in turn defined by  
\begin{equation}
n!_{\ZZ, \NN} :=\prod_{k=1}^n  [k]_{\ZZ, \NN},
\end{equation}
with the right side being a product of  {\em extended  integers} $[k]_{\ZZ, \NN}$.
Finally 
 the {\em extended integers} $[k]_{\ZZ, \NN}$ may be  defined  by  the formula
\begin{equation}
[n]_{\ZZ, \NN} := \prod_{b \mid n, b\ge2} b^{ \ord_b(n)},
\end{equation}
where $\ord_b(n)$ is the maximal $k\in\mathbb{N}$ such that $b^k$ divides $n$ (\cite[Theorem 7.3]{LY:24a}).
These generalized integers  $[n]_{\ZZ,\NN}$ 
have  an internal structure driven by the prime factorization of $n$,
and have large oscillations in their sizes for consecutive $n$.

According to  \cite[Theorem 7.6]{LY:24a}, the integer
$\oH_n$ is given explicitly by  a factorization formula:
\begin{equation}\label{eqn:oHn0} 
\oH_n = \prod_{b=2}^n b^{\Gnu(n,b)},
\end{equation} 
where  $\Gnu(n,b)$ are integers definable  in terms of the  base $b$ radix expansions
of  integers up to $n$. Specifically,  
\begin{equation}\label{eqn:gnu-b}
\Gnu(n,b):=\frac{2}{b-1}S_b(n)-\frac{n-1}{b-1}d_b(n),
\end{equation}
  where $d_b(n)$ is the sum of the base $b$ digits of $n$ and $S_b(n):=\sum_{j=1}^{n-1}d_b(j)$.
  The two terms on the right side of \eqref{eqn:gnu-b} are not necessarily integers, they
  are rational numbers with denominator dividing $b-1$, but their sum is an integer  and
   $\Gnu(n, b)=0$ for $b>n$; see Theorem \ref{thm:nub} below.

  This paper studies the asymptotic behavior  of the extended binomial products $\oH_n$ and  of its partial factorizations
  \begin{equation}\label{eqn:Hnx-def} 
\oH(n,x)  := \prod_{b=2}^{\lfloor x\rfloor} b^{\Gnu(n,b)},
\end{equation} 
where $1 \le x \le n$.
It extracts   a scaling limit as  $n \to \infty$
and $x= \alpha n$, where $0 < \alpha \le 1$ is fixed.

%

\subsection{Main results: Asymptotics of extended binomial products $\oH_n$} \label{sec:11}

The initial result  determines the growth rate  of the  full sum.

%
%

\begin{thm}\label{thm:oHn}
Let $\oH_n$ be given by 
\eqref{eqn:oHn0}.
Then for all integers $n\ge2$,
\begin{equation}\label{eqn:oH-aysmp1}
\log\oH_n=\frac{1}{2}n^2\log n+\left(\frac{1}{2}\gamma-\frac{3}{4}\right)n^2+O\left(n^{3/2}\log n\right),
\end{equation}
where $\gamma$ is Euler's constant.
\end{thm}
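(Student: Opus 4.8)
The plan is to pass from the sum over bases $b$ to an integral, using the explicit formula \eqref{eqn:gnu-b} for $\Gnu(n,b)$. Write
\[
\log\oH_n=\sum_{b=2}^n\Gnu(n,b)\log b=\sum_{b=2}^n\left(\frac{2}{b-1}S_b(n)-\frac{n-1}{b-1}d_b(n)\right)\log b.
\]
The first step is to find, for each fixed $b$, the leading-order behavior of $S_b(n)$ and $d_b(n)$. Since $d_b(n)=O(\log n)$ uniformly, the term $\frac{n-1}{b-1}d_b(n)\log b$ contributes $O(n\log n\sum_b\frac{\log b}{b-1})=O(n(\log n)^3)$ in total, which is absorbed in the error. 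For $S_b(n)=\sum_{j=1}^{n-1}d_b(j)$, the classical estimate (Delange-type) gives $S_b(n)=\frac{b-1}{2}\,n\log_b n+O(n)$, so that $\frac{2}{b-1}S_b(n)=n\log_b n+O\!\big(\frac{n}{b-1}\big)$; more precisely one wants the shape $S_b(n)=\frac{b-1}{2\log b}\,n\log n+c_b\,n+O(b\log n)$ with an explicit bounded fluctuating constant $c_b$, but for the leading two terms in \eqref{eqn:oH-aysmp1} it suffices to control $S_b(n)$ to relative accuracy, summed against $\log b$.

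The main term then comes from $\sum_{b=2}^n\big(\tfrac{2}{b-1}S_b(n)\big)\log b\approx\sum_{b=2}^n n\log n=n^2\log n$ at crudest order; to get the constant $\tfrac12$ and the secondary term one must keep the next order. The cleanest route is to compare the sum over $b$ with $\int_2^n$, writing each $S_b(n)$ via the exact identity $S_b(n)=\sum_{k\ge 1}\big(\lfloor n/b^k\rfloor\cdot\frac{b-1}{2}b^{k-1}+\text{correction}\big)$ coming from counting digit occurrences, and then summing geometric-type series in $b$. I expect the key intermediate claim to be an asymptotic of the form $\sum_{b=2}^x\Gnu(n,b)\log b$ for $x=n$ equal to $\frac12 n^2\log n+(\frac12\gamma-\frac34)n^2+O(n^{3/2}\log n)$, where the appearance of Euler's constant $\gamma$ is the telltale sign that a partial sum $\sum_{b\le n}\frac{1}{b}$-type tail (giving $\log n+\gamma$) is being integrated against something of size $\sim n^2$; concretely $\sum_{b\le n}\frac{\log b}{b}$, $\sum_{b\le n}\frac{1}{b}$, and the second-order Euler–Maclaurin terms combine to produce the $\frac12\gamma-\frac34$.

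The power-saving error term $O(n^{3/2}\log n)$ is the delicate part, and I expect it to be the main obstacle. It cannot come merely from trivial bounds on the digit sums; rather it must exploit cancellation in the fluctuating parts of $S_b(n)$ and $d_b(n)$ as $b$ varies, or equivalently good control on the functions $\oA(n,x)=\sum_{2\le b\le x}d_b(n)$ and $\oB(n,x)=\sum_{2\le b\le x}S_b(n)$ advertised in the abstract. So the actual structure of the proof should be: (i) reduce $\log\oH(n,x)$ to a weighted combination of $\oA(n,x)$, $\oB(n,x)$, and related sums by Abel summation against the weight $\log b$; (ii) insert the unconditional estimates for $\oA$ and $\oB$ (proved elsewhere in the paper) which carry the power-saving remainder, coming ultimately from bounding sums like $\sum_{b\le x}\{n/b\}$ and $\sum_{b\le x}\lfloor n/b\rfloor$ with error $O(\sqrt n)$ via the Dirichlet hyperbola method; (iii) specialize to $x=n$, evaluate the resulting elementary main-term integrals/sums, and identify the constant as $\frac12\gamma-\frac34$. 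Steps (i) and (iii) are bookkeeping; step (ii) — ensuring that the $b$-sum of radix fluctuations genuinely saves a power of $n$, rather than losing a logarithm per base and accumulating — is where the real work lies, and it is exactly the content of the $\oA,\oB$ estimates the paper sets up for this purpose.
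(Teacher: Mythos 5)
Your first concrete claim, that $d_b(n)=O(\log n)$ uniformly in $b$ so that $\sum_{b=2}^n\frac{n-1}{b-1}d_b(n)\log b=O(n(\log n)^3)$, is false and breaks the argument. The bound on the digit sum is $d_b(n)\le(b-1)\frac{\log(n+1)}{\log b}$ (Lemma~\ref{lem:dbn-Sbn-bound}), which is $O(\log n)$ only for bounded $b$; for $\sqrt{n}<b\le n$ the integer $n$ has exactly two base-$b$ digits and $d_b(n)=n-(b-1)\lfloor n/b\rfloor$, which is typically of order $b$ (e.g.\ $d_b(n)\approx n-b$ for $b$ slightly above $n/2$). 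In fact $\oB(n)=\sum_{b=2}^n\frac{n-1}{b-1}d_b(n)\log b$ is of the same order as the main term: Theorem~\ref{thm:oBn} proves $\oB(n)=(1-\gamma)n^2\log n+(\gamma+\gamma_1-1)n^2+O(n^{3/2}\log n)$. If the $d_b$-contribution really were $O(n(\log n)^3)$ the leading coefficient of $\log\oH_n$ would have to come entirely from the $S_b$-sum $\oA(n)\sim(\tfrac{3}{2}-\gamma)n^2\log n$, giving $\tfrac{3}{2}-\gamma\approx 0.923$ rather than $\tfrac12$ — so the error is not merely one of bookkeeping, it changes the answer.

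A second, more minor problem: the functions you invoke at the end are not the ones the paper actually uses. You write $\oA(n,x)=\sum_{2\le b\le x}d_b(n)$ and $\oB(n,x)=\sum_{2\le b\le x}S_b(n)$, but the paper defines $\oA(n,x)=\sum_{2\le b\le x}\frac{2}{b-1}S_b(n)\log b$ and $\oB(n,x)=\sum_{2\le b\le x}\frac{n-1}{b-1}d_b(n)\log b$; both the weights $\frac{\log b}{b-1}$ and the roles of $S_b$ versus $d_b$ are swapped. Your high-level step (i)--(iii) skeleton (reduce $\log\oH_n$ to $\oA(n)-\oB(n)$, estimate each, combine) does match the paper's outline, but you have the wrong picture of where the difficulty lies: $\oB(n)$ is handled by splitting $b\le\sqrt n$ off as a trivial $O(n^{3/2}\log n)$ tail and exploiting the two-digit structure for $b>\sqrt n$; and $\oA(n)$ cannot be gotten from $\oB(n)$ together with a Stirling-type formula (none exists for $\oH_n$), so the paper instead develops a recursion $\oA_{11}(n)=\sum_{j=2}^n\frac{2}{j-1}\oB(j)$ that feeds the $\oB$-asymptotics back into itself. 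Neither of these mechanisms appears in your sketch, and your appeal to ``cancellation in the fluctuating parts'' via the hyperbola method is not how the power saving actually arises — it comes from the cutoff at $b=\sqrt n$ and from integer-averaging over all bases, not from oscillation.
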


Theorem \ref{thm:oHn} has two main terms in the asymptotics followed by  a power-saving remainder term.
We note the appearance of Euler's constant in the second main term.
   The proof of Theorem \ref{thm:oHn}  is discussed
in Section \ref{sec:15a} and in Section \ref{sec:2aa}. 
It  obtains asymptotics  using the formula \eqref{eqn:gnu-b} by  directly estimating  various digit sums,  
as the  base $b$ is varied.

Theorem \ref{thm:oHn} is  used as  an initial condition  in obtaining growth estimates for partial factorizations $\oH(n,x)$
for general $1 \le x \le n$, which we discuss next.

%
%

\subsection{Main results: Asymptotics of partial factorizations $\oH(n,x)$}\label{sec:11b}
The main result of the paper  
determines the size of the partial factorization function $\oH(n,x)=\prod_{b=2}^{\lfloor x\rfloor}b^{\Gnu(n,b)}$ in the range $1\le x\le n$.
This ``partial factorization" is partial in two senses: it is partial in not being a factorization into primes, and it is partial in using
only the terms $b^{\Gnu(n,b)}$ for $b \le x$  in the full product. 
We establish  the following limiting behavior as $n\to\infty$ taking $x= x(n):=\alpha n$.

%
%

\begin{thm}\label{thm:oHnx-main}
Let $\oH(n,x)=\prod_{b=2}^{\lfloor x\rfloor}b^{\Gnu(n,b)}$. Then for all integers $n \ge 2$ and real $\alpha\in\left[\frac{1}{\sqrt{n}},1\right]$,
\begin{equation}\label{eqn:Gnx-main}
\log\oH(n,\alpha n)=f_\oH(\alpha)\,n^2\log n+g_\oH(\alpha)\,n^2+O\left(n^{3/2}\log n\right),
\end{equation}
in which:

  \emph{(a)} $f_{\oH} (\alpha)$  is a  function with $f_{\oH}(0)=0$ and defined for all $\alpha>0$ by
\begin{equation}\label{eqn:oHnx-parametrized-1}
f_{\oH}(\alpha)=\frac{1}{2}
+\frac{1}{2}\alpha^2\left\lfloor\frac{1}{\alpha}\right\rfloor^2+\frac{1}{2}\alpha^2\left\lfloor\frac{1}{\alpha}\right\rfloor-\alpha\left\lfloor\frac{1}{\alpha}\right\rfloor;
\end{equation}

\emph{(b)} $g_{\oH} (\alpha)$  is a  function with  $g_{\oH}(0)=0$ and  defined for all $\alpha>0$ by
\begin{eqnarray}
g_{\oH}(\alpha) &=&  \bigg( \frac{1}{2} \gamma - \frac{3}{4} \bigg) -\frac{1}{2}\bigg( H_{\lfloor\frac{1}{\alpha}\rfloor}- \log \frac{1}{\alpha} \bigg) 
 + \bigg(\log \frac{1}{\alpha} \bigg)\bigg( -\frac{1}{2} - \frac{1}{2} \alpha^2 \bigg\lfloor \frac{1}{\alpha} \bigg\rfloor \bigg\lfloor \frac{1}{\alpha} +1\bigg\rfloor + \alpha \bigg\lfloor \frac{1}{\alpha} \bigg\rfloor \bigg)  
 \nonumber \\
&&- \frac{1}{4} \alpha^2 \bigg\lfloor \frac{1}{\alpha} \bigg\rfloor \bigg\lfloor \frac{1}{\alpha} +1\bigg\rfloor + \alpha \bigg\lfloor \frac{1}{\alpha} \bigg\rfloor.\label{eqn:oHnx-parametrized-2}
\end{eqnarray}
Moreover, for all integers $n \ge2$ and real $\alpha \in\left[ \frac{1}{n},\frac{1}{\sqrt{n}}\right]$, 
\begin{equation}\label{eqn:oHna-bound2} 
\log\oH(n, \alpha n) = O \left( n^{3/2} \log n\right) .
\end{equation} 
\end{thm}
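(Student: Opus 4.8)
The plan is to deduce the estimate from the full‑sum asymptotics of Theorem~\ref{thm:oHn} by a tail computation that becomes elementary once one notices that in the relevant range every base is ``large.'' For $\alpha\in\bigl[\tfrac1{\sqrt n},1\bigr]$ one writes
\[
\log\oH(n,\alpha n)=\log\oH_n-\sum_{\lfloor\alpha n\rfloor<b\le n}\Gnu(n,b)\log b,
\]
and observes that every integer $b$ in the tail satisfies $b>\alpha n\ge\sqrt n$, hence $n<b^2$ and $n$ has at most two base‑$b$ digits. Writing $n=qb+r$ with $q=\lfloor n/b\rfloor$ and $0\le r<b$, the same two‑digit manipulation of \eqref{eqn:gnu-b} that underlies Theorem~\ref{thm:nub} gives the closed form $\Gnu(n,b)=q\bigl((q+1)b-1-n\bigr)$, which is manifestly a nonnegative integer here. (Equivalently one can carry the whole argument through the auxiliary functions $\oA(n,x)$ and $\oB(n,x)$ and their unconditional estimates, since $\log\oH(n,x)$ is a fixed linear combination of these; I describe the direct route.)

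I would then evaluate the tail by splitting the range $\lfloor\alpha n\rfloor<b\le n$ into level sets of $q=\lfloor n/b\rfloor$: the full intervals $b\in\bigl(\tfrac n{k+1},\tfrac nk\bigr]$ for $k=1,\dots,K-1$ and the truncated interval $b\in(\alpha n,\tfrac nK]$ for $k=K:=\lfloor1/\alpha\rfloor$. On a level set, $\Gnu(n,b)\log b=\bigl(k(k+1)b-k(n+1)\bigr)\log b$ is smooth in $b$, so Euler--Maclaurin (equivalently partial summation) replaces the sum by the integral $\int\bigl((k+1)t-n-1\bigr)\log t\,dt$ over the interval, up to an error $O(n\log n)$ per level set; since there are at most $K\le\sqrt n$ level sets, the total error incurred is $O(n^{3/2}\log n)$, matching the claimed remainder. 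The integrals are elementary, producing contributions of orders $n^2\log n$ and $n^2$. Summing the $n^2\log n$‑contributions telescopes over $k$ (via $\sum_{k<K}\frac1{k(k+1)}=1-\frac1K$), and after combining with the $\tfrac12 n^2\log n$ of Theorem~\ref{thm:oHn} one obtains precisely $f_{\oH}(\alpha)$ as in \eqref{eqn:oHnx-parametrized-1}; summing the $n^2$‑contributions, combining with the constant $\tfrac12\gamma-\tfrac34$ of Theorem~\ref{thm:oHn} and using $\sum_{k\le K}\frac1k=H_K$, one obtains $g_{\oH}(\alpha)$ as in \eqref{eqn:oHnx-parametrized-2}.

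For the complementary range $\alpha\in\bigl[\tfrac1n,\tfrac1{\sqrt n}\bigr]$ there is no main term: here $\alpha n\le\sqrt n$, so $\log\oH(n,\alpha n)=\sum_{2\le b\le\lfloor\alpha n\rfloor}\Gnu(n,b)\log b$ is a sum of at most $\sqrt n$ terms. By Theorem~\ref{thm:nub} each $\Gnu(n,b)\ge0$, so $0\le\Gnu(n,b)\log b\le\frac{2S_b(n)}{b-1}\log b$; and since $S_b(n)\le(n-1)(b-1)\bigl(\tfrac{\log n}{\log b}+1\bigr)$, each term is $\le2(n-1)(\log n+\log b)=O(n\log n)$ when $b\le\sqrt n$. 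Summing over $b$ gives \eqref{eqn:oHna-bound2}. Finally $f_{\oH}(0)=g_{\oH}(0)=0$ since $\oH(n,0)$ is the empty product.

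The crux is the level‑set evaluation: one must keep the error uniform in $\alpha$ across all of $\bigl[\tfrac1{\sqrt n},1\bigr]$, in particular for $\alpha$ near $1/\sqrt n$ where there are $\sim\sqrt n$ level sets each carrying an $O(n\log n)$ Euler--Maclaurin error, and one must patiently reconcile the raw output of the summation (with its truncated final interval and its boundary terms) with the compact closed forms \eqref{eqn:oHnx-parametrized-1}--\eqref{eqn:oHnx-parametrized-2}; the appearance of $H_{\lfloor1/\alpha\rfloor}$ in $g_{\oH}$ is the one genuinely delicate bookkeeping point, and it is reassuring that the resulting formulas are internally consistent at the endpoints $\alpha=1$ (reducing to Theorem~\ref{thm:oHn}) and $\alpha=1/\sqrt n$ (where $f_{\oH}(\alpha)n^2\log n=O(n^{3/2}\log n)$, matching the low‑range bound).
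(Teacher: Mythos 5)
Your proposal is correct in outline but takes a genuinely different route from the paper. The paper proves this theorem by first decomposing $\log\oH(n,x)=\oA(n,x)-\oB(n,x)$, proving asymptotics for $\oA(n,x)$ and $\oB(n,x)$ separately (Theorems \ref{thm:oAnx} and \ref{thm:oBnx}, themselves relying on the complement-sum recursions of Lemma \ref{lem:oA-diff-formula} and Lemma \ref{lem:51a}, and the Euler--Maclaurin evaluation Lemma \ref{lem:integral-calculation-EM}), and then subtracting to get Theorem \ref{thm:oHnx}. You instead work directly with the valuation: you observe that for $b>\sqrt n$ one has the closed form $\Gnu(n,b)=\lfloor n/b\rfloor\bigl((\lfloor n/b\rfloor+1)b-1-n\bigr)$ (this can be checked against \eqref{eqn:Gnunb-formula}: only $i=1$ contributes, and the double sum collapses to $q(b-1-r)$ where $n=qb+r$), write $\log\oH(n,\alpha n)=\log\oH_n-\sum_{\lfloor\alpha n\rfloor<b\le n}\Gnu(n,b)\log b$, decompose into level sets of $q=\lfloor n/b\rfloor$ exactly as in Lemma \ref{lem:diff1}/Lemma \ref{prop:32n}, replace sums by integrals with uniform $O(n\log n)$ error per level set, and collect the $K=\lfloor1/\alpha\rfloor\le\sqrt n$ level sets. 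Your approach is more economical for this one theorem: it avoids having to carry the separate $H$- and $J$-heavy expressions for $f_{\oA},g_{\oA},f_{\oB},g_{\oB}$ and cancel them afterwards, and it exhibits $\Gnu(n,b)\log b$ as a directly nonnegative $O(n\log n)$ quantity from the start. What it does not give, and what the paper explicitly wants, are Theorems \ref{thm:oBnx-cor} and \ref{thm:oAnx-cor} as standalone results; it also still relies on Theorem \ref{thm:oHn} as the anchor, and that theorem is proved in the paper precisely via $\oA(n)$ and $\oB(n)$, so the $\oA/\oB$ machinery is not entirely avoided. Your low-range bound for $\alpha\in[\tfrac1n,\tfrac1{\sqrt n}]$ is the same trivial term-by-term estimate the paper uses in Theorems \ref{thm:oAnx}/\ref{thm:oBnx}. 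The one thing your write-up elides is the actual closed-form evaluation of the level-set integrals and their reassembly into \eqref{eqn:oHnx-parametrized-1}--\eqref{eqn:oHnx-parametrized-2} (the appearance of $H_{\lfloor1/\alpha\rfloor}-\log\tfrac1\alpha$ and the constant $\tfrac12\gamma-\tfrac34$); this is routine but nontrivial bookkeeping, and you correctly flag it as the crux, so I regard the argument as sound modulo that computation.
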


The  functions $f_{\oH}(\alpha)$ and $g_{\oH}(\alpha)$ are continuous functions of $\alpha$ on $[0,1]$
 which are analytic on $I_j= \left[\frac{1}{j+1}, \frac{1}{j}\right]$ for each
 $j \ge1$, and have breakpoints at $\alpha= \frac{1}{j}$ which are ``special values." 
 The asymptotic estimate \eqref{eqn:Gnx-main} implies that $f_{\oH}(\alpha)$ is obtained as  a  limit scaling function
$f_\oH(\alpha) :=\lim_{n\to\infty}\frac{1}{n^2\log n}\log\oH(n,\alpha n).$
 The function $f_{\oH}(\alpha)$ is given in Figure \ref{fig:AB1}.

\begin{figure}[h] 
\begin{center}
\includegraphics[scale=0.40]{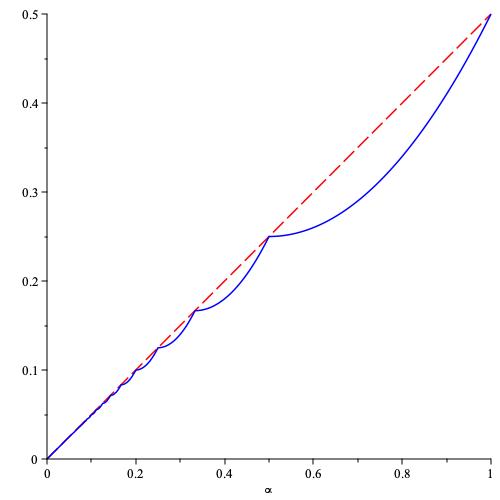}
\end{center}
\caption{Graph of the limit scaling function $f_{\oH}(\alpha)$,  $0 \le \alpha \le 1.$ 
Here $f_{\oH}(0)=0$ and $f_{\oH}(1)= \frac{1}{2}.$ The dashed line is $\tilde{f}(\alpha)= \frac{1}{2} \alpha$.}
\label{fig:AB1}
\end{figure} 

\newpage
 
The   function $f_{\oH}(\alpha)$ is identical with  a limit scaling  function $f_{G}(\alpha)$ 
obtained from the product of the usual  binomial coefficients,
 discussed in Section \ref{sec:201} below (cf.  \cite[Theorem 1]{DL:22}).
Since $f_{\oH}(\alpha)= f_{G}(\alpha)$,  the following  properties  
 are consequences of   \cite[Lemma 4.2]{DL:22}.
\begin{enumerate}
\item[(i)]The function $f_\oH(\alpha)$ is continuous on $[0,1]$. It is real-analytic on $(0,1)$ minus the points $\alpha=\frac{1}{2},\frac{1}{3},\frac{1}{4},\dots$. It has $f_{\oH}(0)=0$.
\item [(ii)]
The function $f_{\oH}(\alpha)$  is positive and strictly increasing on $(0,1]$. It has $f_{\oH}(1)= \frac{1}{2}.$
\item[(iii)]
There is a smooth  interpolating function
 $\tilde{f}(\alpha) =\frac{1}{2} \alpha$ on $[0,1]$  that satisfies
\begin{equation}\label{eqn:foH-bound}
f_{\oH} ( \alpha) \le \tilde{f}(\alpha)=  \frac{1}{2}\alpha  \quad \mbox{for all} \quad 0 \le \alpha \le 1.
\end{equation} 
The interpolation property is that $\tilde{f}(\alpha)=f_{\oH}(\alpha)$ at the
points $\alpha= 1,\frac{1}{2},\frac{1}{3},\dots$, and $\alpha=0$.  Equality holds nowhere else on $[0, 1]$. 
\item[(iv)] The function $f_{\oH}(\alpha)$ is   piecewise quadratic; i.e., for each integer $j \ge1$, it is  given by 
\begin{equation}
f_{\oH}(\alpha) = \frac{1}{2} - j\alpha + \frac{j(j+1)}{2}  \alpha^2 \quad \mbox{for all} \quad \frac{1}{j+1} \le \alpha \le \frac{1}{j}.
\end{equation} 
\end{enumerate} 
An attractive  alternate form for  $f_{\oH}(\alpha)$ is 
\begin{equation}\label{eqn:alt-foH}
f_{\oH}(\alpha)=\frac{1}{2}\alpha^2\left(\left\lfloor\frac{1}{\alpha}\right\rfloor+\left\{\frac{1}{\alpha}\right\}^2\right),
\end{equation}
where $\{x\}=x-\lfloor x\rfloor$ is the fractional part function. 

The  secondary  limit function $g_{\oH}(\alpha)$ is  
 pictured in Figure \ref{fig:AB2}.

\begin{figure}[h] 
\begin{center}
\includegraphics[scale=0.40]{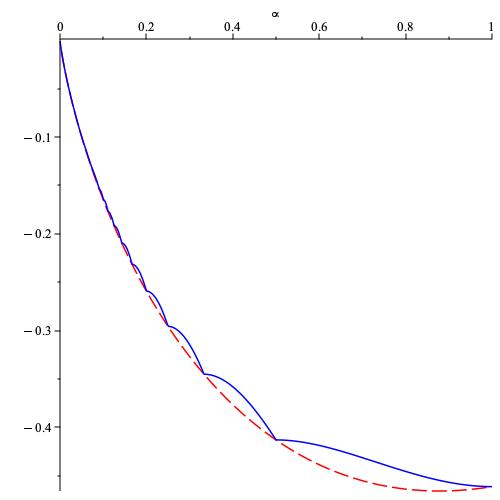}
\end{center}
\caption{Graph of the limit scaling function $g_{\oH}(\alpha)$, $0 \le \alpha \le 1$.
Here $g_{\oH}(0)=0$ and $g_{\oH}(1) = \frac{1}{2} \gamma - \frac{3}{4}  \approx - 0.46139$. 
The dashed curve is $\tilde{g}(\alpha)$.}
\label{fig:AB2}
\end{figure} 

Some properties of $g_{\oH}(\alpha)$ are: 
\begin{enumerate}
\item[(i)] The function $g_{\oH}(\alpha)$ is continuous on $[0,1].$ It is real-analytic on $(0,1)$ minus the points $\alpha= \frac{1}{2},\frac{1}{3},\frac{1}{4},\dots$. It has $g_{\oH}(0)=0$.
\item[(ii)] The function $g_{\oH}(\alpha)$ is  negative and strictly decreasing on $(0,1]$. 
It has $g_{\oH}(1) = \frac{1}{2} \gamma - \frac{3}{4}  \approx -0.46139$.
\item[(iii)] Let $\psi(x) = \frac{\Gamma'}{\Gamma}(x)$ be the digamma function. 
There is a smooth  interpolating function $\tilde{g}(\alpha)$
given by 
\begin{equation}
\tilde{g}(\alpha) = - \frac{1}{2} \psi\left(\frac{1}{\alpha}+1\right) + \frac{1-\alpha}{2}\log \frac{1}{\alpha} - \frac{\alpha}{4}\quad\text{for all}\quad0 < \alpha \le 1,
\end{equation}
which extends to a continuous function at  $\alpha=0$, with $\tilde{g}(0)=0$. It satisfies
\begin{equation}
g_{\oH}(\alpha) \ge \tilde{g}(\alpha)   \quad \mbox{for all} \quad 0 \le \alpha \le 1.
\end{equation}
The interpolation property  is that  $\tilde{g}(\alpha)=g_{\oH}(\alpha)$ at the
points $\alpha= 1,\frac{1}{2},\frac{1}{3},\dots$, and $\alpha=0$. Equality holds nowhere else on $[0, 1]$.
\item[(iv)] The function $\tilde{g}(\alpha)$ is strictly convex on $[0,1]$.
 The function $g_{\oH}(\alpha)$ is strictly concave on $ \left[\frac{1}{j+1}, \frac{1}{j} \right]$ for each integer $j \ge 2$.
(It is not concave on the interval $\left[ \frac{1}{2}, 1\right]$,  where it has a single inflection point.)
\end{enumerate}
These properties of $g_{\oH}(\alpha)$ are established in Lemma \ref{lem:93}.

The proof of Theorem \ref{thm:oHnx-main} is discussed in Section \ref{sec:15a} and Section \ref{sec:2aa} and
deduced from auxiliary results  in Sections \ref{sec:asymp-oBnx} and \ref{sec:asymp-oAnx}. 
Theorem \ref{thm:oHnx-main} is  a direct consequence of   Theorem \ref{thm:oHnx},
 for  $\oH(n,x)$, by the change of variable $\alpha= \frac{x}{n}$.

%
%

\subsection{Binomial products $\G_n$ and  partial factorizations $G(n,x)$}\label{sec:201}

To put the main results in perspective,    
we  review earlier  work on products of   binomial coefficients,
stating  results in parallel with Theorems \ref{thm:oHn} and \ref{thm:oHnx-main}.

The sequence $\G_n$ of the product of the binomial coefficients in the $n$-th row of Pascal's triangle,
\begin{equation}\label{eqn:defGn}
\G_n:=\prod_{k=0}^n\binom{n}{k}=\frac{(n!)^{n+1}}{\prod_{k=0}^n(k!)^2},
\end{equation}
 was studied in \cite{LagM:2016}.
 Here  $\G_n$ arises as  the reciprocal of the product of  nonzero unreduced Farey fractions of order $n$,
i.e., the reciprocal of the product of all  fractions $0 < \frac{k}{\ell} \le 1$, not necessarily in lowest terms,  having $1 \le k \le \ell \le n$.
Thus  for $n=6$, both $\frac{1}{3}$ and $\frac{2}{6}$ are included in the product. 

We  write   the  prime factorization of $\G_n$ as
\begin{equation}\label{eqn:productformula}
\G_n=\prod_{p}p^{\nu_p(\G_n)},
\end{equation}
where $\nu_p(a)$ denotes the additive $p$-adic valuation of $a$.
Theorem 5.1 of \cite{LagM:2016} shows that the 
quantities $\nu_p(\G_n)$ are given by a formula $ \Gnu(n,p)$ defined purely in terms of the base-$p$ radix
expansions of all the integers up to $n$, so
\begin{equation}\label{eqn:productformula2}
\G_n=\prod_{p}p^{\Gnu(n,p)},
\end{equation}
where $\Gnu(n,p)$ is given by  the base $p$ digit sum formula \eqref{eqn:gnu-b} restricted to $b=p$, a prime.
Such formulas for individual binomial coefficients are treated in \cite{Gra97}.

 One may easily derive, using Stirling's formula, the estimate
 \begin{equation}\label{eqn:logG-asymp}
\log\G_n=\frac{1}{2}n^2 +O \left( n \log n\right).
\end{equation}
 Moreover here is a complete asymptotic expansion (\cite[Theorem A.2]{LagM:2016}).
 %
 %
 \begin{thm}\label{thm:13} 
 There  is a full asymptotic expansion of $\log \G_n$, for each $M \ge 0$, 
 \begin{equation}\label{eqn:logG-asymp2}
\log\G_n=\frac{1}{2}n^2-\frac{1}{2}n\log n+\left(1- \frac{1}{2} \log(2\pi)\right) n - \frac{1}{3} \log n + \sum_{k=0}^M \frac{g_k}{n^k} + O_M \left( \frac{1}{n^{M+1}} \right).
\end{equation}
Here $g_0= -\frac{1}{2} \log (2\pi)- \frac{1}{12} + 2 \log A$, where $A$ is the Glaisher--Kinkelin constant, 
 and for all $k \ge 1$ the coefficients $g_k$ are rational numbers.
 \end{thm}
 This asymptotic expansion is based on  asymptotics of  the Barnes $G$-function. An asymptotic for    $\G_n$ itself was recently  given by Kellner \cite[Theorem 10]{Kellner:24}. 
 
In 2021 the first two authors \cite{DL:22} studied partial factorizations $G(n,x)$ of the product $\G_n$,
defined by 
\begin{equation}\label{eqn:Gnx-def}
G(n,x) = \prod_{p \le x} p^{\Gnu(n,p)}.
\end{equation}

Theorem 1 of  \cite{DL:22} stated: 
%
%
%
\begin{thm}\label{thm:Gnx-main} 
Let $G(n, x) = \prod_{p \le x} p^{\Gnu(n,p)}$. Then for  all integers $n \ge 4$ and  all $0< \alpha \le 1$,  
\begin{equation}\label{eqn:Gnx-main2}  
\log G(n,\alpha n)=f_G(\alpha)n^2+R_G(n,\alpha n), 
\end{equation}
where $f_G(\alpha)$ is a  function given for all $\alpha>0$ by
 \begin{equation}\label{eqn:Gnx-parametrized2}
f_G(\alpha)=\frac{1}{2}+\frac{1}{2}\alpha^2\left\lfloor\frac{1}{\alpha}\right\rfloor^2+\frac{1}{2}\alpha^2\left\lfloor\frac{1}{\alpha}\right\rfloor-\alpha\left\lfloor\frac{1}{\alpha}\right\rfloor,
\end{equation}
with $f_G(0)=0$ and $R_G(n, \alpha n)$ is a remainder term.

\emph{(1)} Unconditionally, there is a positive constant $c$ such that for all integers $n \ge 4$ and all  $0 < \alpha \le 1$, the remainder term satisfies
\begin{equation}\label{eqn:remU}
R_G(n,\alpha n)=O\left(\frac{1}{\alpha}n^2\exp(-c\sqrt{\log n})\right).
\end{equation}
The implied constant in the $O$-notation does not depend on $\alpha$.

\emph{(2)} Conditionally on the Riemann hypothesis, for  all integers $n \ge 4$ and  all $0 < \alpha \le 1$, the remainder term satisfies
\begin{equation}\label{eqn:remRH}
R_G(n,\alpha n)=O\left(\frac{1}{\alpha}n^{7/4}(\log n)^2\right).
\end{equation}
The  implied constant in the $O$-notation does not depend on $\alpha$.
\end{thm}

The limit scaling function 
$f_G(\alpha)= \lim_{n \to \infty} \frac{1}{n^2} \log G(n, \alpha n)$
is identical to $f_{\oH}(\alpha)$  pictured  in Figure \ref{fig:AB1}.

   The location of the zeros of the Riemann zeta function seem to control the remainder term $R_{G}(n, \alpha n)$.
 In the converse direction, the  paper \cite[Section 5]{DL:22} 
 observed that if one knew a good estimate at the  single point $\alpha= 1/2$,
 for the scaled asymptotics of the logarithm of the partial factorization of the central binomial coefficient 
 ${{2n}\choose {n}}$, taking only prime factors  $p \le n$, taking  the form
  $\log \prod_{p\le n}p^{\nu_p\left({2n\choose n}\right)} =  (2\log2-1)n +O\left( n^{1- \delta}\right)$   (i.e., taking $\alpha= \frac{1}{2}$)  
 for some $\delta >0$, then one could derive
  a zero-free region of the Riemann zeta function of the form $\Re(s)> 1- h(\delta)$
  for some $h(\delta)>0$.
  One may ask if a similar converse implication might  hold for binomial products at  $\alpha= \frac{1}{2}$;
  i.e., whether a bound 
  $\log G\left(n, \frac{1}{2}n\right)= \frac{1}{4}n^2 + O \left( n^{2- \delta}\right)$
 would imply  a nontrivial zero free region of similar shape.  (Here $f_{G}(1/2)= 1/4$.)

%
%

\subsection{Comparison}\label{sec:14a}

The study of various types of generalized binomial coefficients accentuates  
the unusually good  properties of the usual binomial coefficients, 
viewed inside a larger framework. One  fruitful generalization is to  
the Gaussian polynomials ${n\brack m}_q=\frac{(q)_n}{(q)_m (q)_{n-m}}$, with $(q)_k= \prod_{i=1}^k(1-q^i)$
and $0 \le m \le n$,  also called $q$-binomial coefficients,
are a $q$-generalization which specialize to binomial coefficients as $q \to 1$.
They are  constructed by an additive recursion,
treated in \cite[Section 3.3]{Andrews:98}, and in Berndt \cite[Section 4]{Berndt:10}, who 
relates them to hypergeometric series.

The  extended binomial coefficients $\binom{n}{k}_{\ZZ, \NN}$ 
treated here have a purely multiplicative definition.
This paper determines the large scale behavior of these coefficients, given by
the overall growth rate of $\log \oH_n$ and the 
 partial factorizations $\log \oH(n,x)$, with $x = \alpha n$
as $n \to \infty$. It shows smooth behavior described by limit scaling functions,  with a power-savings remainder term.
These properties of the limit scaling functions are 
``emergent properties" on a large scale, in the sense that the 
individual extended integers $[n]_{\ZZ, \NN}$, from which 
extended binomial coefficients are built, show large oscillations in size, 
as $n$ varies.

 The nice  ``local"  and additive properties of the usual binomial coefficients fail to hold for extended binomial coefficients.
\begin{enumerate}
\item[(i)] 
On the level of a single row, the extended binomial coefficients lack the montonicity and log-concavity properties, 
of the usual binomial coefficients. The coefficients on the $n$-th row 
 are  sometimes not  unimodal. 
 \item[(ii)]
 The  sum of  extended binomial coefficients in the $n$-th row of the 
extended Pascal's triangle is not an increasing function of $n$.
\item[(iii)]
There is no known additive analogue of the two term additive recursion $\binom{n}{k} = \binom{n-1}{k} + \binom{n-1}{k-1}$
in Pascal's triangle.
\end{enumerate}

On the positive side,
 the asymptotics of $\log \oH(n,\alpha n)$ given in Theorem \ref{thm:oHnx-main} has, 
after  the two main terms,    an unconditional  remainder term exhibiting a  power-savings $O(n^{3/2 + \varepsilon})$
in the full region $\alpha \in \left[\frac{1}{n}, 1\right]$, $n\ge2$.
The summation over all bases $b \ge 2$ has smoothed the remainder term,
avoiding any appeal to the Riemann hypothesis. 

One may hope to use $\oH(n,x)$ (with two variables $n$ and  $x$),
perhaps adding additional statistics that allow only $b$'s in arithmetic progressions,
 to permit  more direct recovery of information about the binomial products $G(n,x)$.
 It is also an  interesting question to account for the appearance of the secondary limit scaling function $g_{\oH}(\alpha)$
 in the asymptotics of $\log \oH(n,\alpha n)$ from the viewpoint of prime number theory.
These topics  are  briefly addressed in Section \ref{sec:8}.

%
%

\subsection{Proofs}\label{sec:15a}

The proofs of Theorems \ref{thm:oHn} and \ref{thm:oHnx-main}  use 
the radix expansion formula \eqref{eqn:gnu-b}.
These proofs use  the regularity and structure  to the variation of the  base $b$ radix expansions of {\em fixed}  $n$, { when 
 the base $b \ge 2$ is varied.} The digit sum variables for fixed $n$  but varying bases  $b$  are significantly correlated, 
 running over $b \le n$, and are perfectly correlated for $b \ge n$.
 The limit scaling functions  quantify the  nature of the cross-correlation in different ranges of the
two variables.

The proof approach to Theorem \ref{thm:oHnx-main} parallels that  for estimating partial factorizations
 of  products of binomial coefficients in \cite{DL:22}.
There is  however an important difference between  radix expansions 
appearing in  the binomial coefficient case and those appearing in the
extended binomial coefficient case.  It is that  for prime $p$ and integers $n_1, n_2$ one has 
\begin{equation}\label{nu-equality}
\nu_p(n_1) + \nu_p(n_2) = \nu_p(n_1n_2),
\end{equation}
while for composite $b$ one has only
\begin{equation}\label{eq:nu-inequality}
\nu_b(n_1) + \nu_b(n_2) \le \nu_b(n_1n_2).
\end{equation}
For $\oH(n,x)$  the extra averaging over all integers $b\in(1,x]$ 
leads to  the unconditional power savings $O\left( n^{3/2} \log n \right)$ in the
remainder term.  The  form  of the secondary term $g_{\oH}(\alpha)$ 
in the asymptotics  seems  in part  related to   the inequality 
\eqref {eq:nu-inequality} for composite $b$.

  In Section \ref{sec:2aa}   we outline the proof approach to
proving the main theorems,  which has four auxiliary theorems. 
The analysis splits the logarithm into separate  contributions of the two
 radix expansion terms on the right side of  \eqref{eqn:gnu-b}.
  It determines  asymptotics of the
 two terms separately; they give rise to several new scaling
 functions, given in Section \ref{sec:2aa}.

%
%

\subsection{Contents}\label{sec:26aa}

\begin{enumerate}
\item
Section \ref{sec:2a} reviews prior work on generalized factorials
and on digit expansions. 
\item
Section \ref{sec:2aa} outlines
 the proof method for the main result. 
It  is based on subsidiary estimates  for functions
$\oA(n,x)$ and $\oB(n,x)$  obtained using the
additive digit sum factorization. As  initial conditions
one needs estimates for 
$\oA(n) = \oA(n,n)$ and $\oB(n)= \oB(n,n)$.
\item
Section \ref{sec:2} collects facts about digit sums and
gives estimates for  auxiliary 
sums needed in later estimates. In particular it treats the bivariate sums
\begin{equation}
\oC(n,x):=\sum_{1 \le b\le x}\left\lfloor\frac{n}{b}\right\rfloor\log b.
\end{equation}
\item
Section \ref{sec:oB} estimates $\oB(n)$, proving  Theorem \ref{thm:oBn}.
\item
Section \ref{sec:AH} estimates $\oA(n)$, proving Theorem \ref{thm:oAn}.
The main  Theorem \ref{thm:oHn} for $\oH_n$ follows combining $\oA(n)$
and $\oB(n)$. 
\item
Section \ref{sec:asymp-oBnx} estimates $\oB(n,x)$, proving  Theorem \ref{thm:oBnx}.
 Theorem \ref{thm:oBnx-cor} for $\oB(n,\alpha n)$ immediately follows. 
\item
Section \ref{sec:asymp-oAnx} estimates $\oA(n,x)$, proving Theorem \ref{thm:oAnx}. 
 Theorem \ref{thm:oAnx-cor} for $\oA(n,\alpha n)$ immediately  follows. 
\item
Section \ref{sec:oHnx} estimates $\oH(n,x)$, proving  Theorem \ref{thm:oHnx}.
The main Theorem \ref{thm:oHnx-main} for $\oH(n, \alpha n)$ follows. 
\item
Section \ref{sec:8} presents concluding remarks. These include 
a discussion of  extensions of the construction of ``multiple integral" integer sequences
$\G_n^{(-j)}$ for $j \ge 1$.
\end{enumerate}

%
%

\section{Prior work}\label{sec:2a}

%
%

\subsection{Generalized factorials and generalized binomial coefficients}\label{sec:10}

\subsubsection{ Bhargava's  generalized factorials}\label{subsubsec:211}

In the late 1990's M. Bhargava \cite{Bhar:97a} introduced a notion of generalized factorials and generalized
binomial coefficients associated to arbitrary subsets $S$ of a Dedekind domain $\DD$,
based on a notion of {\em $\pp$-orderings} of subsets $S \subseteq \DD$,
which are  defined for all nonzero prime ideals $\pp$ of the Dedekind domain. 
 The generalized factorials were defined  as integral ideals 
 in terms of associated {\em $\pp$-invariants}, $\nu_n(S, \pp) := \pp^{\alpha_n(S, \pp)}$  as
$$
n!_{S} = \prod_{\pp} \pp^{\alpha_n(S, \pp)}.
$$
Bhargava's definition of generalized factorials gives them in terms of their prime  factorization.
For the general definition of the $\pp$-invariants $\nu_n(S, \pp)$ 
we  refer the reader to \cite{Bhar:97a} or \cite{Bhar:00}. (The notation $\alpha_n(S, \pp)$ for
the exponent is introduced in \cite{LY:24a}, and in  \cite{LY:24b} for the general case.)

 Bhargava related these  invariants to rings of integer-valued  polynomials, and 
 obtained many applications (\cite{Bhar:09}). For connections with integer-valued polynomials,
 see \cite{CC: 96} and \cite{CC:16}.
 The generalized binomial coefficients were defined (as fractional ideals of $\DD$) by
 $ {{n}\choose{k}}_S = \frac{n!_{S}}{k!_{S} (n-k)!_{S}},$
and  they are provably integral ideals.

Bhargava treated  the special case  $\DD=\ZZ$ in \cite{Bhar:00}.
For the ring  $\ZZ$ 
the  prime ideals in the definition 
can be identified with the nonnegative primes, and the generalized factorials and  binomial coefficients
are then nonnegative integers.
Bhargava notes that the choice  of set $S=\ZZ$ recovers the usual factorials and the usual binomial coefficients,
see \cite[Definition 7]{Bhar:00}. 

\subsubsection{ Bhargava's factorials, extended}\label{subsubsec:212}
In 2024 the  last two authors (in \cite{LY:24a})  extended  Bhargava's notion of $p$-orderings on $\ZZ$
  to define $b$-orderings  for all ideals $(b)$ of $\ZZ$, with associated
invariants $\alpha_n(S, b)$.
 In this setting one can define, for all $n \ge 0$   generalized factorials
$$
n!_{S, \sT} := \prod_{b \in \sT} b^{\alpha_n(S, b)}
$$
depending on two parameters $S$ and $\sT$, 
in which  $S$ is a nonempty subset of $\ZZ$,
and $\sT\subseteq \NN$ is an arbitrary nonempty set of ideals of $\ZZ$. 
On identifying the monoid   of  ideals $\sI(\ZZ)$ of $\ZZ$ under ideal multiplication
 with the monoid  $\NN$ of nonnegative
integers under multiplication, these factorials are
 provably nonnegative integers. 

One  can define generalized
binomial coefficients 
$$
\binom{n}{k}_{S, \sT} := \frac{   n!_{S, \sT} } {k!_{S, \sT} (n-k)!_{S, \sT} };
$$
they are also nonnegative  integers, see \cite[Lemma 6.2]{LY:24a}.

 Bhargava's factorials correspond to 
the special choice $\sT= \PP$,
the set of all nonzero primes. The simplest generalization of 
the ordinary factorial is  produced by the full 
set $S =\ZZ$, and the maximal choice $\sT=\NN$.
In  this case the  contribution of  the zero ideal $0$ and the unit ideal $(1)$ to the resulting  extended factorial
are always  factors of $1$  (whenever $\sT$ is an infinite set), so they may be disregarded,
and one restricts the product to the set of $b \ge 2$. 

The extreme case $S=\ZZ$ and $\sT= \NN$ 
giving the particular extended  factorials
$ n!_{\ZZ, \NN}$ and extended  binomial coefficients $\binom{n}{k}_{\ZZ,\NN}$
was studied in Section 7 of  \cite{LY:24a}.

The generalized factorials $ n!_{\ZZ, \NN}$  fit in the general framework  of Knuth and Wilf \cite{KW89} treating generalized factorials and binomial coefficients as products of generalized integers (denoted $C_n$ in their paper). 
The sequence  of generalized integers $[n]_{\ZZ,\NN}$  
is  however not a regularly divisible sequence, a main focus of  \cite{KW89}.

We  note that our  terminology  ``extended binomial coefficients"   $\binom{n}{k}_{\ZZ, \NN}$ differs from 
the ``extended binomial coefficients",  denoted $\binom{n}{k}^{(q)}$,  defined  by  
 $
 \sum_{k=0}^{\infty} \binom{n}{k}^{(q)} x^k = (1+x^2 + \cdots + x^q)^n,
 $
  in the combinatorial literature,  cf. Comtet \cite[p. 77]{Comtet:74} and  \cite{Neuschel:14}.

%
%

\subsection{Radix  expansions to base $b$} \label{sec:231}

Work of the second author and Mehta \cite{LagM:2016} in 2016,
 studied radix expansion statistics   
 which hold the integer $n$ fixed, 
while varying across different radix bases up to $n$,
in connection with products of binomial coefficients,
 e.g., statistics  of the type of $A(n)$ and  $B(n)$.
 (Motivation for this work  originally came from study  of   
  analogous statistics for products of Farey fractions, given in \cite{LagM:2017}.) 
The paper \cite{DL:22} of the first two authors then studied the statistics $A(n,x)$ and $B(n,x)$
for partial factorizations of products of binomial coefficients. 
The motivation of \cite{DL:22} was study of prime number distribution from  a novel direction.

There  has been a great deal of study of the radix statistics $d_b(n)$ and $S_b(n)$
for a fixed base $b \ge 2$ and letting $n$ vary.
Work on $d_b(n)$ has mainly been probabilistic, for random integers
in an initial interval $[1,n]$, which is surveyed by  Chen et al \cite{CHZ:2014}.
One has for all $n \ge 1$,
\begin{equation} 
\EE[ d_b(k): 0 \le k \le n-1] \le \frac{b-1}{2} \log_b n,
\end{equation} 
 a result which is close to sharp when $n = b^k$ for some $k \ge 1$.
 We have 
 $d_b(n) \le (b-1) \log_b (n+1)$;
 see Lemma \ref{lem:dbn-Sbn-bound}. 
 It implies 
 $$ \oB(n)\le\sum_{b=2}^n\frac{n-1}{b-1}\left(\frac{(b-1)\log(n+1)}{\log b}\right)\log b=(n-1)^2\log(n+1).$$

Work on the smoothed function  $S_b(n)$ 
studying the asymptotics
as $n \to \infty$ started with 
Bush \cite{Bush:40} in 1940, followed by Bellman and  Shapiro \cite{BelS:48}, 
and Mirsky \cite{Mir:49}, who in 1949 showed for fixed $b \ge 2$, the asymptotic formula 
$$
S_b(n) = \frac{b-1}{2} n \log_b(n) +O(n).
$$ 
In 1952 Drazin and Griffith \cite{DG52} 
deduced an inequality implying 
\begin{equation}\label{eqn:S-ineq} 
S_b(n) \le \frac{b-1}{2} n \log_b n,
\end{equation}
for all $b \ge 2$ and $n \ge 1$; see Lemma \ref{lem:dbn-Sbn-bound}.
This inequality is sharp: equality holds at $n=b^k$ for all $k \ge 1$;
see \cite[Theorem 5.8]{LagM:2016}. 
Using Drazin and Griffith's  inequality \eqref{eqn:S-ineq} for $S_b(n)$ we have 
 \begin{equation} \label{eqn:A-inequality} 
 \oA(n) \le  \sum_{b=2}^n \frac{2}{b-1} \left(\frac{(b-1)n\log n}{2\log b}\right) \log b 
 =  n(n-1) \log n.
 \end{equation}
 
A formula of Trollope \cite{Tro:68} in 1968 gave an exact formula for $S_b(n)$ for base $b= 2$. 
Notable work of  Delange \cite{Del:1975} obtained exact formulas for $S_b(n)$ for all $b \ge 2$,
which exhibited an oscillating term in the asymptotics. 
We mention later work of  Flajolet et al \cite{FGKPT94} and Grabner and Hwang \cite{GH05}. 
Recently Drmota and Grabner \cite{DrmGra10}  surveyed this topic.

There are other local inequalities satisfied by
 the functions $S_b(n)$. 
In 2011 Allaart \cite[Equation (4)]{Allaart:11}
showed
an approximate convexity inequality for binary expansions: for all $0 \le \ell \le m$, 
\begin{equation}
S_{2}(m+ \ell) + S_{2}(m-\ell) - 2 S_{2}(m) \le \ell.
\end{equation} 
Allaart \cite[Theorem 3]{Allaart:17}
proved a generalization to any base $b$:
for all $0\le k \le m$,
\begin{equation}
S_{b}(m+ k) + S_{b}(m-k) - 2 S_b(m)  \le \left\lfloor \frac{b+1}{2} \right\rfloor k.
\end{equation} 
Allaart \cite[Theorem 1]{Allaart:17}
proved a superadditivity inequality valid for base $b$ expansions: for all $m,n \ge 1$,  
\begin{equation}
S_b(m+n) \ge S_b(m) + S_b(n) + \min(m,n).
\end{equation}

%
%

\section{Proof method for  the main result: Auxiliary theorems}\label{sec:2aa}

We outline the proof method for Theorem \ref{thm:oHnx-main}, which 
relies on analysis of  digit expansions.  There are four auxiliary theorems used
to obtain the result.

%
%

\subsection{Results: Asymptotics of $\oA(n)$ and $\oB(n)$}\label{sec:13}

Set 
\begin{equation}\label{eqn:oA-function}
\oA(n,x)=\sum_{2\le b\le x}\frac{2}{b-1}S_b(n)\log b
\end{equation}
and
\begin{equation}\label{eqn:oB-function}
\oB(n,x)=\sum_{2\le b\le x}\frac{n-1}{b-1}d_b(n)\log b.
\end{equation}
The proof is based on  the identity
\begin{equation}\label{eqn:oHABx}
\log \oH(n,x)  = \oA(n, x) - \oB(n, x).
\end{equation}
The identity is obtained by taking logarithms of both sides of the product formula \eqref{eqn:Hnx-def}
for $\oH(n,x)$ and substituting the formula \eqref{eqn:gnu-b}  
for each $\Gnu(n,b)$. 

The  resulting functions $\oA(n, x)$ and $\oB(n, x)$ are arithmetical sums
that combine 
behavior of the base $b$ digits of the integer $n$, viewing $n$ as fixed, and {\em varying the radix base  $b$}.
These functions  are weighted averages of statistics 
of the radix expansions of $n$ for varying bases $2\le b\le x$. 
The interesting range of $x$  is  $1 \le x \le n $  because $\oH(n,x)$ ``freeze" at $x=n$: 
$\oH(n,x)=\oH(n,n)$ for all $x \ge n$.

The main part of the paper determines the asymptotics of two nonnegative arithmetic functions $\oA(n,x)$ 
and $\oB(n,x)$, from which we  obtain asymptotics for $\log \oH(n,x)$ via \eqref{eqn:oHABx}.
 Most previous work on individual radix statistics have studied holding the radix base $b$ fixed and varying $n$, 
reviewed in  Section \ref{sec:231}.
But here  the radix sums $\oA(n,x)$ and $\oB(n,x)$ hold $n$ fixed and vary the radix base $b$.

The  proofs  first obtain   estimates for the  special case $x=n$, setting 
\begin{equation}\label{eqn:oA-function-0}
\oA(n):=\oA(n,n)=\sum_{b=2}^n\frac{2}{b-1}S_b(n)\log b,
\end{equation}
\begin{equation}\label{eqn:oB-function-0}
\oB(n):=\oB(n,n)=\sum_{b=2}^n\frac{n-1}{b-1}d_b(n)\log b.
\end{equation}
We determine  asymptotics separately for  the two functions $\oA(n)$ and $\oB(n)$  as $n \to \infty$, giving
a main term and  a bound on the remainder term. The analysis first estimates the fluctuating term $\oB(n)$
depending on $d_b(n)$, and then estimates $\oA(n)$ using the estimates for  $\oB(j)$ for $ 1 \le j \le n$.

%
%

\begin{thm}\label{thm:oBn}
Let $\oB(n)$ be given by \eqref{eqn:oB-function-0}.
Then for all integers $n\ge2$,
\begin{equation}\label{eqn:oB-asymp1}
\oB(n)=(1-\gamma)n^2\log n+\left(\gamma+\gamma_1-1\right)n^2+O\left(n^{3/2}\log n\right),
\end{equation}
where $\gamma$ is Euler's constant
and $\gamma_1$ is the first Stieltjes constant.
\end{thm}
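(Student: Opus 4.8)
The plan is to first strip off the factor $n-1$, writing $\oB(n)=(n-1)\,T(n)$ where
\[
T(n):=\sum_{b=2}^{n}\frac{d_b(n)}{b-1}\log b ,
\]
and to reduce the evaluation of $T(n)$ to that of a single fractional-part sum. The key elementary identity is
\[
\frac{d_b(n)}{b-1}=\sum_{j\ge 1}\left\{\frac{n}{b^{\,j}}\right\},
\]
a restatement of Legendre's formula $d_b(n)=n-(b-1)\sum_{j\ge1}\lfloor n/b^{\,j}\rfloor$; for $\sqrt n<b\le n$ it just says $\frac{d_b(n)}{b-1}=\frac{n}{b(b-1)}+\{n/b\}$, since then $n$ has only two base-$b$ digits. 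The terms with $j\ge 2$ contribute only $O(\sqrt n\log n)$ to $T(n)$, because $\{n/b^{\,j}\}$ vanishes unless $b\le n^{1/j}$ while $\sum_{b\le y}\log b\le y\log y$; likewise the terms with $b\le\sqrt n$ contribute $O(\sqrt n\log n)$, since there $\tfrac{d_b(n)}{b-1}\le \log n/\log b$. Hence
\[
T(n)=\sum_{\sqrt n<b\le n}\left\{\frac{n}{b}\right\}\log b+O\!\left(\sqrt n\,\log n\right).
\]

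Next I would write $\{n/b\}=\tfrac nb-\lfloor n/b\rfloor$ and handle the two resulting sums separately. For the first, partial summation with the Stieltjes-constant asymptotic $\sum_{b\le x}\tfrac{\log b}{b}=\tfrac12(\log x)^2+\gamma_1+O\!\left(\tfrac{\log x}{x}\right)$ gives
\[
n\sum_{\sqrt n<b\le n}\frac{\log b}{b}=\tfrac38\,n(\log n)^2+O\!\left(\sqrt n\,\log n\right),
\]
the constant $\gamma_1$ cancelling between the endpoints $b=n$ and $b=\sqrt n$ and leaving $\left(\tfrac12-\tfrac18\right)(\log n)^2$. For the second I would interchange the order of summation:
\[
\sum_{\sqrt n<b\le n}\left\lfloor\frac nb\right\rfloor\log b=\sum_{m}\ \sum_{\sqrt n<b\le n/m}\log b=\sum_{m}\left(\log\lfloor n/m\rfloor!-\log\lfloor\sqrt n\rfloor!\right),
\]
where $m$ runs over the $\sqrt n+O(1)$ integers with $n/m>\sqrt n$. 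Applying Stirling's formula in the form $\log\lfloor y\rfloor!=y\log y-y+O(\log y)$, the estimate $\sum_{m\le M}\tfrac1m=\log M+\gamma+O(1/M)$, and once more the $\gamma_1$-asymptotic for $\sum_{m\le M}\tfrac{\log m}{m}$, one obtains after bookkeeping
\[
\sum_{\sqrt n<b\le n}\left\lfloor\frac nb\right\rfloor\log b=\tfrac38\,n(\log n)^2+(\gamma-1)\,n\log n+(1-\gamma-\gamma_1)\,n+O\!\left(\sqrt n\,\log n\right).
\]

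Subtracting, the $\tfrac38\,n(\log n)^2$ terms cancel exactly, leaving $T(n)=(1-\gamma)\,n\log n+(\gamma+\gamma_1-1)\,n+O(\sqrt n\log n)$; multiplying through by $n-1$ yields the claimed estimate \eqref{eqn:oB-asymp1}. The hard part is precisely this cancellation: every estimate above must be carried to accuracy $O(\sqrt n\log n)$, so that the $n(\log n)^2$ growth is pinned down on the nose and the subleading $n\log n$ and $n$ coefficients are computed exactly — this is where $\gamma$ and $\gamma_1$ enter, and why the answer is not a simple rational multiple of $n^2\log n$. In particular one cannot replace $\lfloor n/m\rfloor$ by $n/m$ inside $\log\lfloor n/m\rfloor!$ for free; the correction is $\{n/m\}\log(n/m)+O(1)$, and one must check that $\sum_{m\le\sqrt n}\{n/m\}\log(n/m)=O(\sqrt n\log n)$ — which holds since $\sum_{m\le\sqrt n}\log(n/m)=O(\sqrt n\log n)$ — as well as that the truncation of the $m$-range and the boundary terms near $b=\sqrt n$ are harmless at the same level of precision.
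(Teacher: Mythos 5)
Your proposal is correct and follows essentially the same path as the paper's proof: reduce to the range $b>\sqrt n$ using the two--digit structure (so that, in your notation, $\frac{d_b(n)}{b-1}=\{n/b\}+\frac{n}{b(b-1)}$, which is the same as the paper's $d_b(n)=n-\lfloor n/b\rfloor(b-1)$), estimate $n\sum_{\sqrt n<b\le n}\frac{\log b}{b}$ via $J(n)-J(\sqrt n)$, and evaluate $\sum_{\sqrt n<b\le n}\lfloor n/b\rfloor\log b$, which is precisely the paper's $\oC(n,n)-\oC(n,\sqrt n)$, by a Dirichlet--hyperbola interchange together with Stirling's formula and the $H$-- and $J$--asymptotics. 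The only cosmetic difference is that the paper routes the last evaluation through the general functional equation for $\oC(n,x)$ (Lemma~\ref{lem:oCfe} and Proposition~\ref{prop:23n}), which it reuses later, whereas you do the interchange directly for the specific range $b>\sqrt n$; both are the same hyperbola symmetry.

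One small slip in your justification: it is not true that $\{n/b^j\}$ ``vanishes unless $b\le n^{1/j}$'' --- what vanishes is $\lfloor n/b^j\rfloor$, while for $b>n^{1/j}$ one has $\{n/b^j\}=n/b^j>0$. The bound $O(\sqrt n\log n)$ for the $j\ge 2$ contribution nevertheless holds, but by a different reason: for $b>\sqrt n$ the tail sums to the geometric series $\sum_{j\ge2}n/b^j=\frac{n}{b(b-1)}$, and $n\sum_{b>\sqrt n}\frac{\log b}{b(b-1)}\ll n\cdot\frac{\log n}{\sqrt n}=\sqrt n\log n$; while for $b\le\sqrt n$ the whole contribution is covered by the bound $\frac{d_b(n)}{b-1}\log b\le\log(n+1)$. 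You should replace the ``vanishes'' argument by this correct one.
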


Two constants  appear in this formula, which are
the Stieltjes constants $\gamma_0=\gamma$  (also known as the Euler--Mascheroni constant)
and $\gamma_1\approx  -0.07282$.
The {\em Stieltjes constants} $\gamma_n$  appear in the Laurent expansion of the Riemann zeta function at $s=1$, as
\begin{equation} 
\zeta (s) = \frac{1}{s-1} + \sum_{n=0}^{\infty} \frac{ (-1)^n }{n!} \gamma_n (s-1)^n. 
\end{equation}
Here $\gamma_0= \gamma \approx 0.57722$ is Euler's constant, and more generally
\begin{equation} \label{eqn:stieltjes}
\gamma_m:=\lim_{n\to\infty}\bigg(\sum_{k=1}^n\frac{(\log k)^m}{k}-\frac{(\log n)^{m+1}}{m+1}\bigg).
\end{equation}

To establish Theorem \ref{thm:oBn} we first show
the main contribution in the sum  $\oB(n)$ comes from  those  values  $b$
having  $b > \sqrt{n}$, whose key  property  is that 
their base $b$  radix expansions have {\em exactly two digits}. 
Proceeding in a similar fashion to \cite{DL:22},
summing over all two-digit patterns,   we obtain a formula of shape  
$\oB(n) = (1-\gamma)n^2 \log n +Cn^2+ O( n^{2- \delta})$ with an
unconditional  power-saving remainder term
possible because the sums involved are over all integers $2\le b \le n$
rather than over all primes $p \le n$.

We also  deduce a corresponding result for $\oA(n)$.

%
%

\begin{thm}\label{thm:oAn}
Let $\oA(n)$ be given by \eqref{eqn:oA-function-0}.
Then for all integers $n\ge2$,
\begin{equation}\label{eqn:oA-aysmp1}
\oA(n)=\left(\frac{3}{2}-\gamma\right)n^2\log n+\left(\frac{3}{2}\gamma+\gamma_1-\frac{7}{4}\right)n^2+O\left(n^{3/2}\log n\right),
\end{equation}
where $\gamma$ is Euler's constant
and $\gamma_1$ is the first Stieltjes constant. 
\end{thm}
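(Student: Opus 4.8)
\emph{Proof plan.} The plan is to deduce Theorem~\ref{thm:oAn} from Theorem~\ref{thm:oBn} by means of a first-difference (telescoping) identity that expresses $\oA(n)$ in terms of the earlier values $\oB(k)$ with $k<n$; this is the route indicated in the passage preceding the statement. The key elementary facts are that $S_b(n)-S_b(n-1)=d_b(n-1)$ for every base $b$, and that each integer $j$ with $1\le j\le n-1$ has a single base-$n$ digit, so that $S_n(n)=\sum_{j=1}^{n-1}j=\binom{n}{2}$. Splitting off the $b=n$ term from the sum \eqref{eqn:oA-function-0} defining $\oA(n)$, that term contributes $\frac{2}{n-1}\binom{n}{2}\log n=n\log n$, while the terms with $2\le b\le n-1$ telescope against $\oA(n-1)$, giving for $n\ge 3$
\[
\oA(n)-\oA(n-1)=n\log n+\sum_{b=2}^{n-1}\frac{2}{b-1}\,d_b(n-1)\log b=n\log n+\frac{2\,\oB(n-1)}{n-2},
\]
where the last equality uses that \eqref{eqn:oB-function-0} can be written as $\oB(m)=(m-1)\sum_{b=2}^{m}\frac{d_b(m)}{b-1}\log b$. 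Summing from the base case $\oA(2)=2\log2$ and reindexing yields the closed form
\[
\oA(n)=\sum_{m=1}^{n}m\log m+2\sum_{k=2}^{n-1}\frac{\oB(k)}{k-1}.
\]

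Next I would substitute Theorem~\ref{thm:oBn}, namely $\oB(k)=(1-\gamma)k^2\log k+(\gamma+\gamma_1-1)k^2+O(k^{3/2}\log k)$, into the second sum. Since $\frac{k^2}{k-1}=k+O(1)$, each summand becomes $\frac{\oB(k)}{k-1}=(1-\gamma)k\log k+(\gamma+\gamma_1-1)k+O(k^{1/2}\log k)$, the $O(\log k)$ discrepancy from replacing $\frac{k^2}{k-1}$ by $k$ being absorbed into the larger error. Applying the elementary asymptotics $\sum_{k\le N}k\log k=\tfrac12N^2\log N-\tfrac14N^2+O(N\log N)$ and $\sum_{k\le N}k=\tfrac12N^2+O(N)$ with $N=n-1$, and then replacing $(n-1)^2\log(n-1)$ and $(n-1)^2$ by $n^2\log n+O(n\log n)$ and $n^2+O(n)$, one gets
\[
2\sum_{k=2}^{n-1}\frac{\oB(k)}{k-1}=(1-\gamma)n^2\log n-\tfrac12(1-\gamma)n^2+(\gamma+\gamma_1-1)n^2+O(n^{3/2}\log n),
\]
the accumulated error being $O(n^{3/2}\log n)$ because $\sum_{k\le n}k^{1/2}\log k=O(n^{3/2}\log n)$. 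Combining this with $\sum_{m\le n}m\log m=\tfrac12n^2\log n-\tfrac14n^2+O(n\log n)$ and collecting coefficients gives $\big(\tfrac12+1-\gamma\big)n^2\log n=\big(\tfrac32-\gamma\big)n^2\log n$ for the leading term and $-\tfrac14-\tfrac12(1-\gamma)+(\gamma+\gamma_1-1)=\tfrac32\gamma+\gamma_1-\tfrac74$ for the $n^2$ coefficient, which is precisely \eqref{eqn:oA-aysmp1}.

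The argument has no conceptual obstacle: all the analytic difficulty — the power-saving control of the fluctuating digit sums — has already been handled in the proof of Theorem~\ref{thm:oBn}. The only thing requiring care is the bookkeeping of error terms: checking that the $O(k^{3/2}\log k)$ errors from Theorem~\ref{thm:oBn}, once divided by $k-1$ and summed over $k\le n$, contribute only $O(n^{3/2}\log n)$; that the Euler--Maclaurin remainders in $\sum m\log m$ and $\sum k$, together with the passage from $n-1$ to $n$ in the main terms, are all $O(n\log n)$; and that dividing by $k-1$ causes no difficulty at the bottom of the range (where one simply starts the sum at $k=2$). As a byproduct, subtracting the asymptotics of Theorem~\ref{thm:oBn} and using \eqref{eqn:oHABx} at $x=n$ recovers Theorem~\ref{thm:oHn}.
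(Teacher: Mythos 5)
Your proof is correct, and it takes a genuinely different route from the paper's. The paper (Section 4) first approximates $\oA(n)$ by $\oA_1(n)=\sum_{b=2}^n\sum_{j=2}^n\frac{2\log b}{b}d_b(j)$ (Lemma \ref{lem:31a}, introducing an $O(n(\log n)^2)$ error from replacing $\frac1{b-1}$ by $\frac1b$), then interchanges the double sum and splits on $b\le j$ versus $b>j$ to obtain the decomposition $\oA_1(n)=\oA_{11}(n)+\oA_{12}(n)-\oA_R(n)$ of Lemma \ref{lem:32a}, with $\oA_{11}(n)=\sum_{j=2}^n\frac{2}{j-1}\oB(j)$ and $\oA_{12}(n)=\sum_{j=2}^n\sum_{b=j+1}^n\frac{2j\log b}{b}$, each of which is then estimated separately (Lemmas \ref{lem:oA11}, \ref{lem:oA12}). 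You instead telescope the exact first difference $\oA(n)-\oA(n-1)=n\log n+\frac{2\oB(n-1)}{n-2}$, which follows immediately from $S_b(n)-S_b(n-1)=d_b(n-1)$ and $S_n(n)=\binom{n}{2}$, arriving at the \emph{exact} closed form
\[
\oA(n)=\sum_{m=1}^n m\log m+2\sum_{k=2}^{n-1}\frac{\oB(k)}{k-1},
\]
with no approximation errors introduced along the way. The two routes end up in essentially the same place — a weighted partial sum of $\oB(k)$ plus a term of size $\frac12 n^2\log n-\frac14 n^2$ (your $\sum m\log m$ versus the paper's $\oA_{12}(n)$, which differ only by $O(n(\log n)^2)$; likewise your $\oB$-sum and the paper's $\oA_{11}(n)$ differ only by the single summand $\frac{2\oB(n)}{n-1}=O(n\log n)$) — but yours is tighter and more economical: the identity is exact, there is no $\oA_R$-type remainder to bound, and the only analytic input needed is Theorem~\ref{thm:oBn} together with the elementary partial-sum estimates for $\sum k\log k$ and $\sum k$. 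Your bookkeeping of the $O(k^{1/2}\log k)$ errors summing to $O(n^{3/2}\log n)$, and the replacement of $(n-1)^2\log(n-1)$ by $n^2\log n+O(n\log n)$, are both correct, and the constant arithmetic checks out: $\tfrac12+(1-\gamma)=\tfrac32-\gamma$ and $-\tfrac14-\tfrac12(1-\gamma)+(\gamma+\gamma_1-1)=\tfrac32\gamma+\gamma_1-\tfrac74$.
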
 

In \cite{DL:22} the analogue of  Theorem \ref{thm:oAn} could be
obtained immediately from Theorem \ref{thm:oBn} 
using knowledge of $\log \G_n$ available using Stirling's formula.
Such information is not available for $\log \oH_n$ and an entirely new method is 
used in Section \ref{sec:AH} to obtain the result via Theorem \ref{thm:oBn}.

Theorem \ref{thm:oHn} follows directly from Theorems \ref{thm:oBn} and \ref{thm:oAn},
via 
\begin{equation}\label{eqn:oHAB} 
\log \oH_n = \oA(n) - \oB(n),
\end{equation}
which is a special case of  \eqref{eqn:oHABx}, taking $x=n$.

The constants appearing in the main term of the asymptotics 
of  $\oA(n)$ and $\oB(n)$
give quantitative information on 
cross-correlations between the statistics $d_b(n)$ and $S_b(n)$  
of the base $b$ digits of $n$ (and smaller integers) as the base $b$ varies while $n$ is held fixed. 
The occurrence of Euler's constant in the main term of these asymptotic estimates 
indicates subtle arithmetic behavior in these sums, even though the defining sums are taken  over all $b$ and not over primes $p$; cf. the survey \cite{Lag:13}.

%
%

\subsection{Results: Asymptotics of $\oA(n,x)$ and $\oB(n,x)$}\label{sec:14} 

We first  determine  asymptotics for  $\oB(n, \alpha n)$ for $0 < \alpha \le 1$,
by bootstrapping  the result for $\oB(n)=\oB(n,n)$ decreasing $x$ from $x=n$.
In what follows $H_m =\sum_{j=1}^m \frac{1}{j}$
and $J_m=\sum_{j=1}^m \frac{\log j}{j}$.

%
%

\begin{thm}\label{thm:oBnx-cor}
Let $\oB(n,x)=\sum_{b=2}^{\lfloor x\rfloor}\frac{n-1}{b-1}d_b(n)\log b$. Then for all integers 
$n \ge 2$ and  real 
$\alpha\in \left[ \frac{1}{\sqrt{n}} , 1\right]$,
\begin{equation}\label{eqn:Bnx-main}  
\oB(n, \alpha n) = f_{\oB}(\alpha)  n^2 \log n + g_{\oB}(\alpha)n^2 + O \left( n^{3/2} \log n  \right),
\end{equation}
in which:
\begin{enumerate}
\item[\emph{(a)}] $f_{\oB}(\alpha)$ is a function with $f_{\oB}(0)=0$ and defined for all $\alpha>0$ by 
 \begin{equation}\label{eqn:oBnx-parametrized1} 
f_{\oB}(\alpha) =  (1- \gamma)+ \left( H_{\lfloor \frac{1}{\alpha}\rfloor}- \log \frac{1}{\alpha} \right)  - \alpha \left\lfloor \frac{1}{\alpha}\right\rfloor;
\end{equation}
\item[\emph{(b)}] $g_{\oB}(\alpha)$ is a  function with  $g_{\oB}(0)=0$ and defined for all $\alpha>0$ by
\begin{eqnarray}
g_{\oB}(\alpha) &=&\left(\gamma+\gamma_1-1 \right)- 
 \left( H_{\lfloor \frac{1}{\alpha}  \rfloor}  - \log \frac{1}{\alpha} \right) - \left( J_{\lfloor\frac{1}{\alpha}\rfloor} - 
 \frac{1}{2} \left(\log \frac{1}{\alpha}\right)^2 \right)  \nonumber\\
 &&+ \left(\log \frac{1}{\alpha}\right) \left(-1+\alpha \left\lfloor \frac{1}{\alpha} \right\rfloor  \right)   + \alpha\left\lfloor \frac{1}{\alpha} \right\rfloor.\label{eqn:oBnx-parametrized2} 
\end{eqnarray} 
\end{enumerate} 
Moreover, for all integers $n \ge2$ and real $\alpha \in\left[ \frac{1}{n},\frac{1}{\sqrt{n}}\right]$,
\begin{equation}\label{eqn:oBna-bound2} 
\oB(n, \alpha n) = O \left( n^{3/2} \log n \right) .
\end{equation} 
\end{thm}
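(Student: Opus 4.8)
The plan is to bootstrap from the case $x=n$ supplied by Theorem \ref{thm:oBn}, estimating the tail
\[
\oB(n) - \oB(n,\alpha n) = \sum_{\alpha n < b \le n}\frac{n-1}{b-1}\,d_b(n)\,\log b .
\]
The key structural point is that for $\alpha\in[\tfrac1{\sqrt n},1]$ every base $b$ appearing here has $b > \alpha n \ge \sqrt n$, hence $b^2 > n$, so $n$ has at most two base-$b$ digits and
\[
d_b(n) = \Big\lfloor\tfrac nb\Big\rfloor + \Big(n - b\big\lfloor\tfrac nb\big\rfloor\Big) = n - (b-1)\Big\lfloor\tfrac nb\Big\rfloor .
\]
Substituting this identity collapses the tail into a ``smooth'' and a ``floor'' sum,
\[
\oB(n) - \oB(n,\alpha n) = (n-1)n\!\!\sum_{\alpha n < b \le n}\!\frac{\log b}{b-1}
\;-\;(n-1)\!\!\sum_{\alpha n < b \le n}\!\Big\lfloor\tfrac nb\Big\rfloor\log b ,
\]
and it is the floor sum that manufactures the piecewise structure, i.e.\ the quantities $\lfloor\tfrac1\alpha\rfloor$, $H_{\lfloor 1/\alpha\rfloor}$, $J_{\lfloor 1/\alpha\rfloor}$ occurring in $f_{\oB}$ and $g_{\oB}$.

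For the smooth sum, write $\tfrac1{b-1} = \tfrac1b + \tfrac1{b(b-1)}$ and use the partial-sum asymptotics $\sum_{b\le m}\tfrac{\log b}{b} = \tfrac12(\log m)^2 + \gamma_1 + O\!\big(\tfrac{\log m}{m}\big)$ (essentially \eqref{eqn:stieltjes}) together with $\log(\alpha n) = \log n - \log\tfrac1\alpha$; this gives $\sum_{\alpha n < b \le n}\tfrac{\log b}{b-1} = (\log n)\log\tfrac1\alpha - \tfrac12\big(\log\tfrac1\alpha\big)^2 + O\!\big(\tfrac{\log n}{\alpha n}\big)$, and multiplying by $(n-1)n = n^2 + O(n)$ contributes $n^2(\log n)\log\tfrac1\alpha - \tfrac12 n^2\big(\log\tfrac1\alpha\big)^2$ with total error $O(n^{3/2}\log n)$ on the range $\alpha\ge\tfrac1{\sqrt n}$.

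For the floor sum I would interchange summation using $\lfloor n/b\rfloor = \sum_{m\ge1}[\,b\le n/m\,]$, obtaining $\sum_{m=1}^{\lfloor 1/\alpha\rfloor}\sum_{\alpha n < b\le n/m}\log b$, then apply Stirling in the form $\sum_{b\le y}\log b = y\log y - y + O(\log y)$ to each inner sum and reassemble over $m$: the $\tfrac nm\log n$ terms build $n(\log n)H_{\lfloor 1/\alpha\rfloor}$, the $\tfrac nm\log m$ and $\tfrac nm$ terms build $-nJ_{\lfloor 1/\alpha\rfloor} - nH_{\lfloor 1/\alpha\rfloor}$, and the $\alpha n$ terms build $-\lfloor\tfrac1\alpha\rfloor\alpha n\log n + \lfloor\tfrac1\alpha\rfloor\alpha n\log\tfrac1\alpha + \lfloor\tfrac1\alpha\rfloor\alpha n$, the accumulated errors being $O\!\big(\lfloor\tfrac1\alpha\rfloor\log n\big) = O(n^{1/2}\log n)$. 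Since the whole floor sum is $O(n\log n/\alpha) = O(n^{3/2}\log n)$, the replacement of $(n-1)$ by $n$ is harmless. Collecting the coefficients of $n^2\log n$ and of $n^2$ in $\oB(n) - [\text{smooth}] + [\text{floor}]$ and performing an elementary rearrangement of the $H$, $J$, $\log\tfrac1\alpha$ and floor terms yields precisely \eqref{eqn:oBnx-parametrized1} and \eqref{eqn:oBnx-parametrized2}; specializing to $\alpha=1$ recovers Theorem \ref{thm:oBn}, a useful consistency check.

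Finally, for $\alpha\in[\tfrac1n,\tfrac1{\sqrt n}]$ the bound \eqref{eqn:oBna-bound2} is immediate: for $2\le b\le n$ one has $d_b(n)\le(b-1)(\lfloor\log_b n\rfloor+1)$, so $\tfrac{n-1}{b-1}d_b(n)\log b\le(n-1)(\log n+\log b) = O(n\log n)$, and there are at most $\alpha n\le\sqrt n$ terms. I expect the main obstacle in a complete write-up to be the bookkeeping of the floor sum: tracking the exact $m$-range (and verifying the degenerate $m=\tfrac1\alpha$ term, when $\tfrac1\alpha\in\ZZ$, is genuinely $O(\log n)$), checking that each approximation used ($\tfrac1{b-1}\approx\tfrac1b$, $\lfloor\alpha n\rfloor\approx\alpha n$, $(n-1)\approx n$, and dropping individual boundary terms of size $O(n\log n)$ near $\alpha=\tfrac1k$, where continuity of $f_{\oB},g_{\oB}$ covers the slack) stays within $O(n^{3/2}\log n)$ in the regime $\alpha\ge\tfrac1{\sqrt n}$ where these errors sit close to the threshold, and matching the resulting expression term by term with the stated formulas.
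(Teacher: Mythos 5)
Your approach mirrors the paper's: both use the two-digit identity $d_b(n) = n-(b-1)\lfloor n/b\rfloor$ for $b>\sqrt n$ to split the tail $\oB(n)-\oB(n,\alpha n)$ into a smooth sum $n(n-1)\sum_{\alpha n<b\le n}\frac{\log b}{b-1}$ (the paper's $\oB_{11}^c$) and a floor sum $(n-1)\sum_{\alpha n<b\le n}\lfloor n/b\rfloor\log b$ (the paper's $(n-1)(\oC(n,n)-\oC(n,\alpha n))$), and both evaluate these with $J$-sum asymptotics, Stirling, and harmonic-number estimates. The one small difference is purely mechanical: the paper routes the floor sum through a pre-established functional equation for $\oC$ (Lemma~\ref{lem:oCfe} and Proposition~\ref{prop:23n}), while you interchange via $\lfloor n/b\rfloor=\sum_{m}[b\le n/m]$ and apply Stirling to each inner block, but this is the same hyperbola-type argument and yields the same main and error terms.
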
 

The formulas  given for $f_{\oB}(\alpha)$ and $g_{\oB}(\alpha)$ are sums of functions that are discontinuous 
at the points $\alpha=1,\frac{1}{2},\frac{1}{3},\dots$. However  $f_{\oB}(\alpha)$ and $g_{\oB}(\alpha)$ 
are continuous functions of $\alpha$, extending to a limit value at the endpoint $\alpha=0$. 
The limiting values are $f_{\oB}(0) =0$ and $g_{\oB}(0) =0$.   \medskip

The function $f_{\oB}(\alpha)$  is pictured in Figure \ref{fig:B3}. It is the same function as $f_{B}(\alpha)$ in Theorem 1.5 in \cite{DL:22}.
 Some properties of this limit function are: 
\begin{enumerate}
\item[(i)] $f_{\oB}(\alpha)$ is continuous but not differentiable on $[0,1].$ 
It is real-analytic on $(0,1)$ minus the points $\alpha= \frac{1}{2},\frac{1}{3},\frac{1}{4},\dots$.
\item[(ii)] $f_{\oB}(\alpha)$ is strictly increasing  on $[0,1]$. 
It has $f_{\oB}(0)=0$ and $f_{\oB}(1) = 1-\gamma\approx  0.42278$.
\end{enumerate}

\begin{figure}[h]
\includegraphics[scale=0.40]{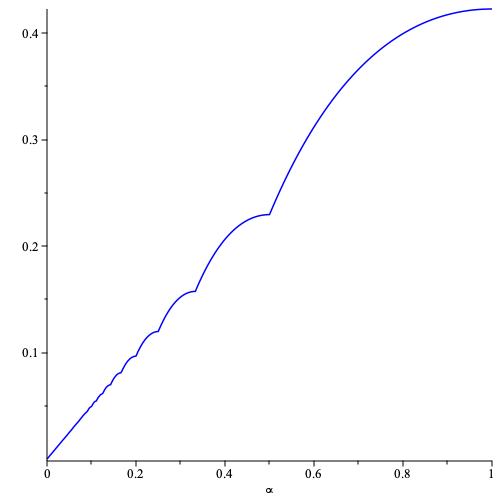}
\caption{Graph of the limit scaling function $f_{\oB}(\alpha)$,  $0 \le \alpha \le 1$. 
Here $f_{\oB}(0)=0$ and $f_{\oB}(1)=1-\gamma\approx0.42278$.
}
\label{fig:B3}
\end{figure}

\newpage 

The function $g_{\oB}(\alpha)$  is pictured in Figure \ref{fig:B4}. Some properties of this limit function are: 
\begin{enumerate}
\item[(i)] $g_{\oB}(\alpha)$ is continuous but not differentiable on $[0,1].$ 
It is real-analytic on $(0,1)$ minus the points $\alpha= \frac{1}{2},\frac{1}{3},\frac{1}{4},\dots$.
\item[(ii)] $g_{\oB}(\alpha)$ is strictly decreasing on $[0,1]$. 
It has $g_{\oB}(0)=0$ and $g_{\oB}(1) = \gamma +\gamma_1 -1 \approx -0.49560$.
\end{enumerate}

\begin{figure}[h] 
\includegraphics[scale=0.40]{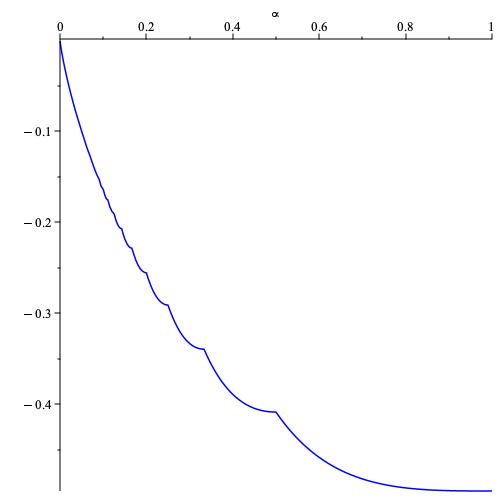}
\caption{Graph of the limit scaling function $g_{\oB}(\alpha)$,  $0 \le \alpha \le 1$. 
Here $g_{\oB}(0)=0$ and $g_{\oB}(1)=\gamma+\gamma_1-1\approx -0.49560$.
}
\label{fig:B4}
\end{figure}

Turning to $\oA(n,x)$, we obtain its asymptotics using a recursion starting from $\oA(n,n)$ 
and working downward, given by \eqref{eqn:oAnn-oAnx-formula}, which involves a
 function $\oC(n,x)$  studied in Proposition \ref{prop:23n}. 
This recursion is different from that used in \cite{DL:22}, which started from $A(x,x)$
and worked upward using $B(y,x)$ for $x < y<n$.

%
%

\begin{thm}\label{thm:oAnx-cor}
Let $\oA(n, x) = \sum_{b=2}^{\lfloor x\rfloor}\frac{2}{b-1} S_b(n) \log b.$ 
Then for all integers
 $n \ge 2$ and real $\alpha\in\left[\frac{1}{\sqrt{n}},1\right]$,
\begin{equation}\label{eqn:Anx-main}  
\oA(n, \alpha n) = f_{\oA}(\alpha)  n^2 \log n + g_{\oA}(\alpha) n^2 + O\left( n^{3/2}\log n\right),
\end{equation}
in which:
\begin{enumerate} 
\item[\emph{(a)}]     $f_{\oA}(\alpha)$  is a function with $f_\oA(0)=0$ and defined for all $\alpha>0$ by
 \begin{equation}\label{eqn:Anx-parametrized} 
 f_{\oA}(\alpha) = \left(\frac{3}{2} - \gamma\right)+  \left( H_{\lfloor \frac{1}{\alpha}\rfloor}- \log \frac{1}{\alpha} \right) + \frac{1}{2} \alpha^2 \left\lfloor \frac{1}{\alpha}\right\rfloor^2 + \frac{1}{2} \alpha^2 \left\lfloor \frac{1}{\alpha}\right\rfloor
- 2 \alpha \,\left\lfloor \frac{1}{\alpha} \right\rfloor;
\end{equation}
\item[\emph{(b)}]  $g_{\oA}(\alpha)$  is a function with $g_{\oA}(0)=0$ and defined  for all $\alpha>0$ by
\begin{eqnarray}
g_{\oA}(\alpha) &= & \left(  \frac{3}{2} \gamma + \gamma_1 -\frac{7}{4}\right) - \frac{3}{2} \left( H_{\lfloor \frac{1}{\alpha} \rfloor} - \log\frac{1}{\alpha}  \right)
- \left( J_{\lfloor \frac{1}{\alpha} \rfloor} \nonumber   - \frac{1}{2} \left(\log \frac{1}{\alpha}\right)^2 \right)\nonumber\\
&& +\left(\log \frac{1}{\alpha}\right)\left( -\frac{3}{2} -\frac{1}{2} \alpha^2  \left\lfloor \frac{1}{\alpha}\right\rfloor \left\lfloor \frac{1}{\alpha} +1 \right\rfloor +2\alpha \left\lfloor \frac{1}{\alpha} \right\rfloor \right) \nonumber\\
&& -\frac{1}{4} \alpha^2 \left\lfloor \frac{1}{\alpha}\right\rfloor \left\lfloor \frac{1}{\alpha} +1 \right\rfloor + 2 \alpha \left\lfloor \frac{1}{\alpha} \right\rfloor.\label{eqn:oAnx-parametrized} 
 \end{eqnarray}
 \end{enumerate} 
Moreover, for all integers $n \ge2$ and real $\alpha \in\left[ \frac{1}{n},\frac{1}{\sqrt{n}}\right]$, 
\begin{equation}\label{eqn:oAna-bound2} 
\oA(n, \alpha n) = O \left( n^{3/2} \log n  \right) .
\end{equation} 
\end{thm}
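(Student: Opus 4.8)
The plan is to run a recursion in $x$ downward from $x=n$. Since $\oA(n,x)=\sum_{2\le b\le x}\frac{2}{b-1}S_b(n)\log b$ is a sum of nonnegative terms, we have the exact identity (this is \eqref{eqn:oAnn-oAnx-formula})
\[
\oA(n,\alpha n)=\oA(n,n)-\sum_{\alpha n<b\le n}\frac{2}{b-1}\,S_b(n)\log b .
\]
Inserting the asymptotics of $\oA(n)=\oA(n,n)$ from Theorem~\ref{thm:oAn}, everything reduces to the asymptotic evaluation of the tail sum $\oA(n,n)-\oA(n,\alpha n)$ for $\alpha\in[\tfrac1{\sqrt n},1]$ — essentially the content of Proposition~\ref{prop:23n} on $C(n,x)$ — and $f_{\oA}(\alpha)$ (resp.\ $g_{\oA}(\alpha)$) will come out as $\bigl(\tfrac32-\gamma\bigr)$ (resp.\ $\bigl(\tfrac32\gamma+\gamma_1-\tfrac74\bigr)$) minus the $n^2\log n$-coefficient (resp.\ $n^2$-coefficient) of that tail. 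As a partial check, the tail vanishes at $\alpha=1$, recovering Theorem~\ref{thm:oAn}.

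To evaluate the tail, I would follow the model already used for the tail of $\oB(n,x)$ in Theorem~\ref{thm:oBnx-cor}, but with $S_b(n)$ in place of $d_b(n)$. Since $\alpha\ge\tfrac1{\sqrt n}$, every base $b$ with $\alpha n<b\le n$ has $b>\sqrt n$, so all integers $0\le j<n$ have at most two base-$b$ digits; writing $n=qb+r$ with $0\le r<b$ (hence $q=\lfloor n/b\rfloor<b$) one gets the exact formula
\[
S_b(n)=b\binom{q}{2}+q\binom{b}{2}+qr+\binom{r}{2},
\qquad\text{so}\qquad
\frac{2}{b-1}S_b(n)=\frac{b}{b-1}\,q(q-1)+bq+\frac{r(2q+r-1)}{b-1}.
\]
Then I would partition $\alpha n<b\le n$ by the value of $q=\lfloor n/b\rfloor$, i.e.\ into the intervals $b\in\bigl(\tfrac n{q+1},\tfrac nq\bigr]$ for $1\le q\le\lfloor\tfrac1\alpha\rfloor$, the bottom one truncated at its left end to $b\in\bigl(\alpha n,\tfrac n{\lfloor 1/\alpha\rfloor}\bigr]$; on each interval $q$ is constant and $r=n-qb$ is affine in $b$. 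A first observation is that the piece $\tfrac{b}{b-1}q(q-1)\log b$ (from $b\binom q2$) contributes only $O(n^{3/2}\log n)$ in total — summing $\lfloor n/b\rfloor(\lfloor n/b\rfloor-1)\log b$ is dominated by the at most $\lfloor\tfrac1\alpha\rfloor\le\sqrt n$ intervals, each contributing $O(n\log n)$ — so only $bq$ and $\tfrac{r(2q+r-1)}{b-1}$ affect the two main terms.

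On each $q$-interval I would replace $\tfrac1{b-1}$ by $\tfrac1b$ (the difference contributes $O(n^{3/2}\log n)$ overall) and then replace the sum over $b$ by the corresponding integral by Euler--Maclaurin. The antiderivatives needed — of $b\log b$, of $\log b$, and of $\tfrac{\log b}{b}=(\tfrac12(\log b)^2)'$ — are elementary, so each interval gives a closed-form contribution. Summing over $q=1,\dots,\lfloor\tfrac1\alpha\rfloor$, the right-endpoint contributions ($b=n/q$), after the partial telescoping between consecutive intervals, produce the partial sums $H_{\lfloor 1/\alpha\rfloor}=\sum_{q\le\lfloor 1/\alpha\rfloor}\tfrac1q$ and $J_{\lfloor 1/\alpha\rfloor}=\sum_{q\le\lfloor 1/\alpha\rfloor}\tfrac{\log q}{q}$; combined with the $\gamma,\gamma_1$ carried over from Theorem~\ref{thm:oAn} these assemble into the differences $H_{\lfloor 1/\alpha\rfloor}-\log\tfrac1\alpha$ and $J_{\lfloor 1/\alpha\rfloor}-\tfrac12(\log\tfrac1\alpha)^2$ of \eqref{eqn:Anx-parametrized}--\eqref{eqn:oAnx-parametrized}, while the single left-endpoint contribution at $b=\alpha n$ (from $bq$ and from the leading $q^2b$ part of $\tfrac{r(2q+r-1)}{b-1}$, taken at $q=\lfloor\tfrac1\alpha\rfloor$) produces the $\tfrac12\alpha^2\lfloor\tfrac1\alpha\rfloor^2$, $\tfrac12\alpha^2\lfloor\tfrac1\alpha\rfloor$ and $\alpha\lfloor\tfrac1\alpha\rfloor$ terms. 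For the remainder: $\tfrac2{b-1}S_b(n)\log b$ is $O(n\log n)$ and of bounded variation on each of the $\le\lfloor\tfrac1\alpha\rfloor\le\sqrt n$ intervals, so the Euler--Maclaurin boundary error is $O(n\log n)$ per interval and $O(n^{3/2}\log n)$ in all, which together with the remainder of Theorem~\ref{thm:oAn} yields the claimed $O(n^{3/2}\log n)$.

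The main obstacle is the bookkeeping in the previous paragraph: summing the per-interval closed forms over $q$, tracking exactly which endpoint terms survive the telescoping between $b\in(\tfrac n{q+1},\tfrac nq]$ and $b\in(\tfrac n{q+2},\tfrac n{q+1}]$, and verifying uniformly over $\alpha\in[\tfrac1{\sqrt n},1]$ that the accumulated errors stay within $O(n^{3/2}\log n)$ although the number of intervals grows like $\tfrac1\alpha$; the final collapse into \eqref{eqn:Anx-parametrized}--\eqref{eqn:oAnx-parametrized} can be cross-checked against $f_{\oA}-f_{\oB}=f_{\oH}$ and $g_{\oA}-g_{\oB}=g_{\oH}$, which follow from \eqref{eqn:oHABx} with Theorems~\ref{thm:oBnx-cor} and \ref{thm:oHnx-main}. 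Finally, for $\alpha\in[\tfrac1n,\tfrac1{\sqrt n}]$ the bound \eqref{eqn:oAna-bound2} needs none of this: $\oA(n,\cdot)$ is nondecreasing and nonnegative, so $\oA(n,\alpha n)\le\oA(n,\sqrt n)=\sum_{2\le b\le\sqrt n}\tfrac2{b-1}S_b(n)\log b$, and here $S_b(n)\le n\max_{j<n}d_b(j)=O\bigl(nb\tfrac{\log n}{\log b}\bigr)$ makes each term $O(n\log n)$, so the sum over the $O(\sqrt n)$ values $b\le\sqrt n$ is $O(n^{3/2}\log n)$.
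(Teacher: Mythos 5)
Your plan is sound and the final bound for $\alpha\in[\tfrac1n,\tfrac1{\sqrt n}]$ matches the paper's, but the route to the main terms is genuinely different from the paper's. Where you decompose the tail $\sum_{\alpha n<b\le n}\tfrac2{b-1}S_b(n)\log b$ via the exact two-digit formula $S_b(n)=b\binom q2+q\binom b2+qr+\binom r2$ and then sum interval-by-interval over $q=\lfloor n/b\rfloor$, the paper instead reformulates the tail through the $\oC$-function (Lemma~\ref{lem:oA-diff-formula}): starting from $S_b(n)=\tfrac{n(n-1)}2-(b-1)\sum_{j<n}\lfloor j/b\rfloor$, it writes $\oA(n,n)-\oA(n,x)=\oB_{11}^c(n,x)-2\sum_{x\le j<n}\bigl(\oC(j,j)-\oC(j,x)\bigr)$, estimates each $\oC(j,j)-\oC(j,x)$ by Proposition~\ref{prop:23n}(2), and collapses the $j$-sum via Lemma~\ref{lem:kernel-estimate} into one clean integral $\int_x^n\bigl(\lfloor\tfrac nu\rfloor+\{\tfrac nu\}^2\bigr)u\log u\,du$, to which Euler--Maclaurin is applied once (Lemmas~\ref{lem:sec6-integral-eval1}--\ref{lem:sec6-integral-eval2}). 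The two are consistent: substituting $u=b$, $q=\lfloor n/b\rfloor$, $r/b=\{n/b\}$ into the paper's integrand gives exactly $qb\log b+\tfrac{r^2}{b}\log b$, which are the two pieces of your decomposition that you correctly identify as carrying the main terms. The trade-off is that the paper's route reuses the $\oC$ machinery already built for $\oB$ and does a single global Euler--Maclaurin evaluation, so the telescoping of endpoint terms across $q$-intervals (which you acknowledge as your ``main obstacle'') never has to be tracked by hand; your route is more hands-on and closer in spirit to the $\oB$ analysis but would require that bookkeeping to be carried out explicitly. One small correction: the identity you label as \eqref{eqn:oAnn-oAnx-formula} is just the definitional tail split; the paper's \eqref{eqn:oAnn-oAnx-formula} is the nontrivial $\oC$-reformulation above, not this tautology. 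Your sanity checks — vanishing of the tail at $\alpha=1$ and $f_{\oA}-f_{\oB}=f_{\oH}$, $g_{\oA}-g_{\oB}=g_{\oH}$ — are the right safeguards for confirming the final closed forms.
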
 

The implied constant in the $O$-notation does not depend on  $\alpha$.
The formulas for $f_{\oA}(\alpha)$ and $g_{\oA}(\alpha)$ are a sum of functions that are discontinuous 
at the points $\alpha= 1,\frac{1}{2},\frac{1}{3},\dots$. 
However the functions $f_{\oA}(\alpha)$ and $g_{\oA}(\alpha)$ 
are continuous functions of $\alpha$.

The function $f_{\oA}(\alpha)$  is pictured in Figure \ref{fig:A5}.

\begin{figure}[h]
\includegraphics[scale=0.4]{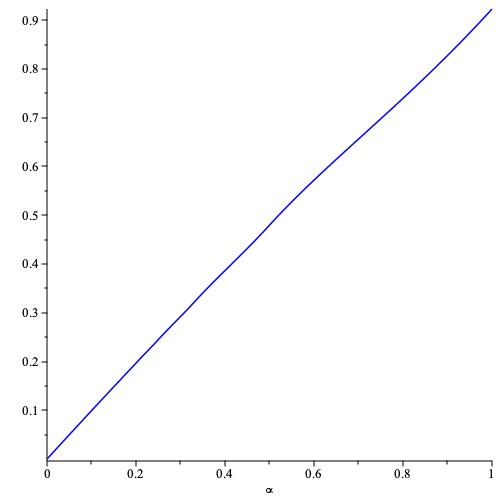}
\caption{Graph of the limit scaling function $f_{\oA}(\alpha)$,  $0 \le \alpha \le 1$. 
Here $f_{\oA}(0)=0$ and $f_{\oA}(1)=\frac{3}{2}-\gamma\approx0.92278$.
}
\label{fig:A5}
\end{figure}

The function $f_{\oA}(\alpha)$ is the same function as $f_{A}(\alpha) $ in Theorem 1.6 in \cite{DL:22}. Some properties of this limit function are:
\begin{enumerate}
\item[(i)] $f_{\oA}(\alpha)$ is continuous on $[0,1].$  It has a continuous derivative on $(0,1)$, given by
$$
f_{\oA}'(\alpha)=\alpha\left(\left\lfloor\frac{1}{\alpha}\right\rfloor+\left\{\frac{1}{\alpha}\right\}^2\right).
$$
It is real-analytic on $(0,1)$ minus the points $\alpha= \frac{1}{2},\frac{1}{3},\frac{1}{4},\dots$.
\item[(ii)] $f_{\oA}(\alpha)$ is strictly increasing  on $[0,1]$. It has $f_{\oA}(0)=0$ and $f_{\oA}(1) = \frac{3}{2} -\gamma\approx0.92278$.
\end{enumerate}

The function $g_{\oA}(\alpha)$  is pictured in Figure \ref{fig:A6}.

\begin{figure}[h]
\includegraphics[scale=0.40]{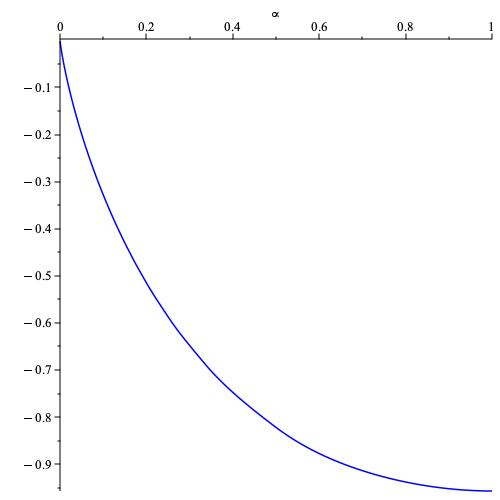}
\caption{Graph of the limit scaling function $g_{\oA}(\alpha)$,  $0 \le \alpha \le 1$. 
Here $g_{\oA}(0)=0$ and $g_{\oA}(1)=\frac{3}{2}\gamma+\gamma_1-\frac{7}{4}\approx -0.95699$.
}
\label{fig:A6}
\end{figure} 

Some properties of the limit function $g_{\oA}(\alpha)$ are: 
\begin{enumerate}
\item[(i)] $g_{\oA}(\alpha)$ is continuous on $[0,1]$. It has a continuous derivative on $(0,1)$, given by
$$
g_{\oA}'(\alpha)=-\alpha\left(\log\frac{1}{\alpha}\right)\left(\left\lfloor\frac{1}{\alpha}\right\rfloor+\left\{\frac{1}{\alpha}\right\}^2\right).
$$
It is real-analytic on $(0,1)$ minus the points $\alpha=\frac{1}{2},\frac{1}{3},\frac{1}{4},\dots$.
\item[(ii)] $g_{\oA}(\alpha)$ is strictly decreasing on $[0,1]$. It has $g_{\oA}(0)=0$ and $g_{\oA}(1) = \frac{3}{2} \gamma+\gamma_1- \frac{7}{4}  \approx -0.95699$.
\end{enumerate}

We obtain  Theorem \ref{thm:oHnx-main} as a corollary of the 
two previous theorems, substituting their estimates into the formula
$$\log \oH(n,x)  = \oA(n, x) - \oB(n, x).$$
At $x=n$ Euler's constant cancels out of the main term of order $n^2 \log n$,
but it is still present in the secondary term of order $n^2$ in
Theorem \ref{thm:oHnx-main}.

To summarize, the  analytic details of proving Theorems \ref{thm:oHn} and \ref{thm:oHnx-main} are  more involved
than those of \cite{DL:22}. On the one hand, the  sums  over all $b\ge2$ are 
 easier to handle than sums over primes in \cite{DL:22}, and  the extra averaging over $b$
 leads to unconditional results.
However   the  additional  limit functions become more complicated, and are 
given as sums of many more discontinuous functions which must be combined properly  
to obtain  continuous limit functions.

%
%

\section{Preliminaries}\label{sec:2}

The first subsection establishes properties of radix expansion statistics $\Gnu(n, b)$, and 
derives inequalities on the size of $d_b(n)$ and $S_b(n)$.  
The next four subsections estimate four families of
sums for an integer $n$ and a real number $x$, treated as step functions: the harmonic numbers $H(x) = \sum_{b=1}^{\lfloor x\rfloor} \frac{1}{b},$
the sums $J(x) = \sum_{b=1}^{\lfloor x\rfloor} \frac{\log b}{b}$, the sums $\oC(n,x) = \sum_{b=1}^{\lfloor x\rfloor} \lfloor \frac{n}{b} \rfloor \log b$, and
$L_i(n) = \sum_{b=2}^n b (\log b)^i$ for $ i \ge 1$.

%
%

\subsection{Radix expansion statistics} \label{subsec:21nn}

Fix an integer $b\ge2$. Let $n$ be a positive integer. Then $n$ can be written uniquely as
\begin{equation}\label{eqn:base-b}
n=\sum_{i=0}^ka_i(b,n)b^i,
\end{equation}
where $a_i(b, n)\in\{0,1,2,\dots,b-1\}$ are the \emph{base-$b$ digits} of $n$ and the \emph{top} digit $a_k$ is positive. We say that $n$ has $k+1$ digits in base $b$. One has $b^k\le n<b^{k+1}$. Hence the number of base-$b$ digits of $n$ is
$$
\left\lfloor \frac{\log n}{\log b}\right\rfloor+1.
$$
Each base-$b$ digit of $n$ can also be expressed in terms of the floor function:
\begin{equation}\label{eqn:floor-recursion}
a_i(b,n)=\left\lfloor\frac{n}{b^i}\right\rfloor-b\left\lfloor\frac{n}{b^{i+1}}\right\rfloor.
\end{equation}
Note that \eqref{eqn:floor-recursion} also defines $a_i(b,n)$ to be $0$ for all $i>\frac{\log n}{\log b}$. The following two statistics of the base-$b$ digits of numbers will show up frequently in this paper.


  \begin{defn}
  (1) The {\em sum of digits function} $d_b(n)$ is given by
  \begin{equation}\label{eqn:dbn}
  d_b(n):=\sum_{i=0}^{\left\lfloor\frac{\log n}{\log b}\right\rfloor}a_i(b,n)=\sum_{i=0}^\infty a_i(b,n),
  \end{equation}
  where $a_i(b,n)$ is given by \eqref{eqn:floor-recursion}.\\
   
  (2) The {\em running digit sum function} $S_b(n)$ is given by
   \begin{equation}\label{eqn:Sbn}
   S_b(n) := \sum_{j=1}^{n-1} d_b(j).
   \end{equation}
   \end{defn}
   
%
%

\begin{thm}\label{thm:nub}
Let $b\ge2$ be an integer, and let  the radix expansion statistic $\Gnu(n,b)$ given for all integers $n \ge 1$ by  
\begin{equation}\label{eqn:localformula}
\Gnu(n,b) =\frac{2}{b-1}S_b(n)-\frac{n-1}{b-1}d_b(n).
\end{equation}
Then:
\begin{enumerate}
\item[\emph{(1)}] For all integers $n \ge 1$, 
$\Gnu(n,b)$ is  a nonnegative integer.
\item[\emph{(2)}]
 $\Gnu(n,b)=0$ if and only if $n=ab^k+b^k-1$ for some $a\in\{1,2,3,\dots,b-1\}$ and integer $k \ge0$.
 \end{enumerate} 
\end{thm}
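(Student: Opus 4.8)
The plan is to establish both statements simultaneously by finding a closed-form expression for $\Gnu(n,b)$ that manifestly displays its integrality and nonnegativity. The key is to rewrite $S_b(n)$ via a standard recursion. Writing $n$ in base $b$ as $n = \sum_{i=0}^k a_i b^i$ with $a_k \ge 1$, one has the classical formula expressing $S_b(n) = \sum_{j=1}^{n-1} d_b(j)$ as a sum over the digits of $n$: peeling off the top digit gives the recursion $S_b(n) = a_k S_b(b^k) + a_k \binom{a_k-1}{1}\frac{b^k}{?}\ldots$ — more precisely, I would use the identity
\begin{equation}\label{eqn:Sb-recursion}
S_b(n) = \sum_{i=0}^{k} \left( a_i \, S_b(b^i) + \binom{a_i}{2} b^i + a_i \, d_i^{+}(n) \cdot \frac{?}{} \right),
\end{equation}
but rather than guess the exact shape, I would instead derive everything from the cleaner recursion $S_b(n+1) = S_b(n) + d_b(n)$ together with $S_b(a b^k) = a S_b(b^k) + \binom{a}{2} b^k + a k' \cdots$. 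The cleanest route: use the known evaluation (provable by an easy induction on $n$, splitting $[0,n-1]$ into blocks of length $b^i$)
\begin{equation}\label{eqn:Sb-closed}
(b-1) S_b(n) = \frac{b-1}{2}\, n\, (\text{number of digits below the top}) + \cdots,
\end{equation}
and I would simply combine $2S_b(n) - (n-1)d_b(n)$ directly. A slicker path avoids closed forms entirely: prove statement (1) by strong induction on $n$ using the two recursions $d_b(n)$ and $S_b(n+1)=S_b(n)+d_b(n)$, reducing $\Gnu(n+1,b) - \Gnu(n,b)$ to $\frac{1}{b-1}\bigl(2d_b(n) - d_b(n+1) + (n)\,(d_b(n)-d_b(n+1))\bigr)$ and checking this is an integer from the carry structure of $n \mapsto n+1$ in base $b$.

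Here is the approach I would actually carry out. First, I would record the exact digit-change identity: if $n$ in base $b$ ends in exactly $r$ consecutive digits equal to $b-1$ (i.e. $n \equiv b^r - 1 \pmod{b^r}$ but not mod $b^{r+1}$), then $d_b(n+1) = d_b(n) - r(b-1) + 1$. Second, using $S_b(n+1) - S_b(n) = d_b(n)$ and the definition \eqref{eqn:localformula}, I would compute
\[
(b-1)\bigl(\Gnu(n+1,b) - \Gnu(n,b)\bigr) = 2 d_b(n) - n\, d_b(n+1) + (n-1) d_b(n) = (n+1) d_b(n) - n\, d_b(n+1).
\]
Substituting the digit-change identity, $(n+1)d_b(n) - n(d_b(n) - r(b-1)+1) = d_b(n) + n(r(b-1) - 1) = d_b(n) - n + nr(b-1)$. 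So modulo $b-1$ the increment is $\equiv \frac{d_b(n) - n}{b-1}$; but $d_b(n) \equiv n \pmod{b-1}$ (the base-$b$ digit-sum congruence), so $(b-1) \mid (d_b(n) - n)$, and the increment $\Gnu(n+1,b)-\Gnu(n,b)$ is an integer. Since $\Gnu(1,b)=0$ (as $S_b(1)=d_b(1)=0$ — wait, $d_b(1)=1$, so $\Gnu(1,b) = \tfrac{2\cdot 0 - 0\cdot 1}{b-1}=0$), induction gives integrality for all $n \ge 1$. For nonnegativity, I would invoke the sharp inequality $S_b(n) \le \frac{b-1}{2} n \log_b n$ of Drazin–Griffith cited in the excerpt — but that bounds $S_b$ from above, the wrong direction; instead nonnegativity should come from a lower bound, or better, from observing that the integer increments above are themselves eventually structured. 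The cleanest nonnegativity argument: show $\Gnu(n,b) \ge 0$ by exhibiting $2S_b(n) - (n-1)d_b(n) = \sum_{j=1}^{n-1}\bigl(d_b(j) + d_b(n-j) - d_b(n)\bigr) + \bigl(S_b(n) - \text{something}\bigr)$ — more directly, pair up $j$ and $n-j$: $2S_b(n) = \sum_{j=1}^{n-1}(d_b(j)+d_b(n-j))$, so $\Gnu(n,b) = \frac{1}{b-1}\sum_{j=1}^{n-1}\bigl(d_b(j) + d_b(n-j) - d_b(n)\bigr)$, and each summand $d_b(j) + d_b(n-j) - d_b(n) = (b-1)\cdot(\text{number of carries when adding } j \text{ to } n-j \text{ in base } b) \ge 0$ by Kummer's theorem. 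This gives both integrality and nonnegativity in one stroke, and is the argument I would present.

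For statement (2), $\Gnu(n,b) = 0$ iff every summand $d_b(j)+d_b(n-j)-d_b(n)$ vanishes, i.e. adding $j$ to $n-j$ in base $b$ produces no carries for all $1 \le j \le n-1$. I would argue this forces all digits of $n$ except possibly the top one to be $b-1$: if some non-top digit $a_i(b,n) < b-1$, choose $j = b^i$, which has a digit $1$ in position $i$; then either position $i$ of $n - j$ is $a_i - 1$ (no carry, fine) — so that's not enough, I need to pick $j$ more cleverly, e.g. $j$ with digit $b-1-a_i$ in position $i$ forces a carry in $n-j$. Conversely, if $n = a b^k + (b^k - 1)$ with $1 \le a \le b-1$, the lower $k$ digits are all $b-1$ and the top digit is $a$; for any split $j + (n-j) = n$, carries into the all-$(b-1)$ region... actually with all lower digits $b-1$, adding any $j, n-j$ with $j+(n-j) = n$ will have carries unless $j$ is supported only on position $k$ — hmm, this needs care. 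The correct characterization: no-carry-for-all-splits is equivalent to $n$ being a number all of whose base-$b$ digits are $0$ except a single block — and the stated form $ab^k + b^k - 1 = (a+1)b^k - 1$ has digits $(a, b-1, b-1, \ldots, b-1)$. I would verify directly that for $n$ of this form, $S_b(n) = \frac{n-1}{2}d_b(n)$ by the block structure, and conversely that $\Gnu(n,b)=0$ with the carry-count formula forces the top digit to "absorb" and all lower digits to be maximal. The main obstacle I anticipate is this converse direction of (2): pinning down exactly which $n$ make every split carry-free requires a clean case analysis on the digits of $n$, choosing witness splits $j$ that provoke a carry whenever $n$ is not of the special form; getting the witnesses right (and handling the top digit separately) is the delicate part, whereas statement (1) falls out essentially for free from Kummer's theorem once the pairing trick is in hand.
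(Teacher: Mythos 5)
Your core argument for part (1) is correct and, once unwound, is essentially the same as the paper's. You write $2S_b(n) = \sum_{j=1}^{n-1}\bigl(d_b(j)+d_b(n-j)\bigr)$ and invoke the digit-sum carry identity $d_b(j)+d_b(n-j)-d_b(n) = (b-1)\,c_b(j,n-j)$, so that $\Gnu(n,b) = \sum_{j=1}^{n-1} c_b(j,n-j)$ is a sum of nonnegative integers. The paper reaches the same count by a slightly more self-contained route: it substitutes $d_b(n) = n - (b-1)\sum_{i\ge 1}\lfloor n/b^i\rfloor$ and the analogous formula for $S_b(n)$ directly into the definition, arriving at
\[
\Gnu(n,b) = \sum_{i=1}^{\infty}\sum_{j=1}^{n-1}\left(\left\lfloor\frac{n}{b^i}\right\rfloor - \left\lfloor\frac{j}{b^i}\right\rfloor - \left\lfloor\frac{n-j}{b^i}\right\rfloor\right),
\]
and each summand is in $\{0,1\}$ by the elementary inequality $\lfloor x+y\rfloor \ge \lfloor x\rfloor + \lfloor y\rfloor$. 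Since the $(i,j)$-summand is precisely the indicator of a carry at position $i-1$, your carry count and the paper's double sum are the same object; the paper's version just avoids citing the carry lemma as a black box. (Your first two paragraphs contain a lot of abandoned false starts and an inconsequential slip in the inductive-increment computation, but you correctly discard that route.)

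For part (2) you have the right structural picture but, as you flag yourself, you do not close either direction cleanly. The paper's "only if" witness is tidy: if $n$ is not of the special form, write $cb^\ell \le n \le (c+1)b^\ell - 2$ with $c\in\{1,\dots,b-1\}$ and $\ell\ge 1$, and the single term $(i,j)=(\ell, b^\ell-1)$ of the double sum already contributes $\lfloor n/b^\ell\rfloor - \lfloor (b^\ell-1)/b^\ell\rfloor - \lfloor (n-b^\ell+1)/b^\ell\rfloor = c - 0 - (c-1) = 1 > 0$, so $\Gnu(n,b)\ge 1$. This sidesteps the case analysis on digit positions you were anticipating. For the converse, if $n = ab^k + b^k - 1$ (digits $a, b-1,\dots,b-1$), then every $1\le j\le n-1$ is digit-dominated by $n$, so $a_i(b,n-j) = a_i(b,n) - a_i(b,j)$ for all $i$; summing gives $d_b(n-j) = d_b(n) - d_b(j)$, and summing over $j$ gives $S_b(n) = (n-1)d_b(n) - S_b(n)$, i.e. $\Gnu(n,b)=0$. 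This is exactly the "block structure" verification you gestured at; it is the easier direction, not the harder one.
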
 

\begin{proof}
To show (1),  we substitute \eqref{eqn:floor-recursion} into \eqref{eqn:dbn} and obtain
\begin{equation}\label{eqn:dbn-formula}
d_b(n)=\sum_{i=0}^\infty\left\lfloor\frac{n}{b^i}\right\rfloor-b\sum_{i=0}^\infty\left\lfloor\frac{n}{b^{i+1}}\right\rfloor=n-(b-1)\sum_{i=1}^\infty\left\lfloor\frac{n}{b^i}\right\rfloor.
\end{equation}
We then substitute \eqref{eqn:dbn-formula} into \eqref{eqn:Sbn} and obtain
\begin{equation}\label{eqn:Sbn-formula}
S_b(n)=\sum_{j=1}^{n-1}j-(b-1)\sum_{j=1}^{n-1}\sum_{i=1}^\infty\left\lfloor\frac{j}{b^i}\right\rfloor=\frac{n(n-1)}{2}-(b-1)\sum_{i=1}^\infty\sum_{j=1}^{n-1}\left\lfloor\frac{j}{b^i}\right\rfloor.
\end{equation}
Now, we substitute \eqref{eqn:dbn-formula} and \eqref{eqn:Sbn-formula} into \eqref{eqn:localformula} and obtain 
\begin{align}
\Gnu(n,b)&=\bigg(\frac{n(n-1)}{b-1}-2\sum_{i=1}^\infty\sum_{j=1}^{n-1}\left\lfloor\frac{j}{b^i}\right\rfloor\bigg)-\bigg(\frac{n(n-1)}{b-1}-(n-1)\sum_{i=1}^\infty\left\lfloor\frac{n}{b^i}\right\rfloor\bigg)\nonumber\\
&=\sum_{i=1}^\infty\bigg((n-1)\left\lfloor\frac{n}{b^i}\right\rfloor-2\sum_{j=1}^{n-1}\left\lfloor\frac{j}{b^i}\right\rfloor\bigg)\nonumber\\
&=\sum_{i=1}^\infty\sum_{j=1}^{n-1}\left(\left\lfloor\frac{n}{b^i}\right\rfloor-\left\lfloor\frac{j}{b^i}\right\rfloor-\left\lfloor\frac{n-j}{b^i}\right\rfloor\right).\label{eqn:Gnunb-formula}
\end{align}
The last quantity \eqref{eqn:Gnunb-formula} expresses $\Gnu(n,b)$ as the sum of integers, which are all nonnegative
due to the  identity valid for all real $x$ and $y$, 
$$
\lfloor x +y\rfloor =  \lfloor x \rfloor + \lfloor y \rfloor + \lfloor  \{ x\} + \{ y\} \rfloor  \ge  \lfloor x \rfloor +  \lfloor y \rfloor,  
$$
see Graham et al \cite[Section 3.1, page 70]{GKP:94}.
Hence $\Gnu(n,b)$ is a nonnegative integer.

We show (2). We 
 prove the `only if' part first. Suppose that $n$ is a positive integer not of the form $ab^k+b^k-1$, where $1\le a\le b-1$ and $k\ge0$.
 Then $cb^\ell\le n\le(c+1)b^\ell-2$ for some $c\in\{1,2,3,\dots,b-1\}$ and positive integer $\ell$.
 We show $\Gnu(n,b)$ is positive. 
 We see that the double sum in  \eqref{eqn:Gnunb-formula},
 is greater than or equal to the summand with $(i,j)=\left(\ell,b^\ell-1\right)$. It follows that 
$$
\Gnu(n,b)\ge\left\lfloor\frac{n}{b^\ell}\right\rfloor-\left\lfloor\frac{b^\ell-1}{b^\ell}\right\rfloor-\left\lfloor\frac{n-b^\ell+1}{b^\ell}\right\rfloor=c-0-(c-1)=1.
$$
Thus, if $\Gnu(n,b)=0$, then $n$ must be of the form $ab^k+b^k-1$ with $1\le a\le b-1$ and $k\ge0$.

Conversely, suppose that $n$ is of the form $ab^k+b^k-1$ with $1\le a\le b-1$ and $k\ge0$. Suppose that $j$ is an integer with $1\le j\le n-1$. For $i\le k-1$, we have $a_i(b,j)\le b-1=a_i(b,n)$.
For $i\ge k$, we also have $a_i(b,j)\le a_i(b,n)$ because $j<n$. Hence 
$$
a_i(b,n-j)=a_i(b,n)-a_i(b,j)
$$
for all $i\ge0$. Summing over $i\ge0$, we obtain
$$
d_b(n-j)=d_b(n)-d_b(j).
$$
Summing over $1\le j\le n-1$, we obtain
$$
S_b(n)=(n-1)d_b(n)-S_b(n),
$$
which implies
$$
\Gnu(n,b)=\frac{2}{b-1}S_b(n)-\frac{n-1}{b-1}d_b(n)=0.
$$
This completes the proof.
\end{proof}


\begin{rem}\label{rem:23}
In general, $\Gnu(n,b)$ does not equal the largest integer $k$ such that $b^k$ divides $\G_n$, which we denote by
$\nu_b(\G_n)$. Moreover $\Gnu(n,b)$ can  be  larger or smaller than $\nu_b(\G_n)$. For example,   $\Gnu(4,4)=3>2=\nu_4(\G_4)$, while  $\Gnu(6,4)=1<2=\nu_4(\G_6)$.
\end{rem}

We establish  inequalities on the size of $d_b(n)$ and $S_b(n)$.

%
%

\begin{lem}\label{lem:dbn-Sbn-bound}
For all integers $b\ge2$ and $n\ge1$, we have
\begin{equation}\label{eqn:dbnineq}
1\le d_b(n)\le\frac{(b-1)\log(n+1)}{\log b},
\end{equation}
\begin{equation}\label{eqn:Sbnineq}
0\le S_b(n)\le\frac{(b-1)n\log n}{2\log b}.
\end{equation}
\end{lem}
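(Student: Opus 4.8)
The lower bounds are immediate: $d_b(n) \ge 1$ because the top digit $a_k(b,n)$ is positive, and $S_b(n) = \sum_{j=1}^{n-1} d_b(j) \ge 0$ as a sum of nonnegative terms (indeed $S_b(1) = 0$ by the empty-sum convention). So the content is in the two upper bounds. For the bound on $d_b(n)$: if $n$ has $k+1$ digits in base $b$, then each digit is at most $b-1$, so $d_b(n) \le (b-1)(k+1)$. Since $b^k \le n$, we have $k \le \frac{\log n}{\log b}$, but I need to be slightly careful to land on $\log(n+1)$ rather than $\log n$ (the stated bound uses $n+1$, which also covers the edge case $n=1$ where $\log n = 0$). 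The clean route is: the number of base-$b$ digits of $n$ is $\left\lfloor \frac{\log n}{\log b} \right\rfloor + 1 \le \frac{\log(n+1)}{\log b}$ — this inequality holds because $\left\lfloor \frac{\log n}{\log b}\right\rfloor + 1 = m$ means $b^{m-1} \le n$, i.e. $b^m \le bn \le n+bn$... actually the cleanest is $b^{m-1} \le n < b^m$ so $m - 1 < \log_b(n+1) \le m$ is not quite forced either; I would instead just use $b^{m-1} \le n$, hence $b^m \le bn < (n+1)^2$ is too lossy. Let me use the direct estimate: from $n \ge b^{m-1} \ge b^{m-1}$ and wanting $m \le \log_b(n+1)$, i.e. $b^m \le n+1$: this fails in general (e.g. $n = b^{m-1}$ gives $b^m = bn$). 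So the correct statement must be interpreted as $d_b(n) \le \frac{(b-1)\log(n+1)}{\log b}$ via a tighter digit count — in fact $d_b(n) \le (b-1) \cdot (\text{number of digits})$ is wasteful when the leading digit contributes, and the right inequality is the elementary $d_b(n) \le n$ combined with... Actually the honest approach: I will show $d_b(n) \le (b-1)\log_b(n+1)$ by induction or by noting $\sum_{i=0}^{k} a_i (b-1) \le (b-1)(k+1)$ and separately $b^{k+1} - 1 \ge n$ is false; rather $n \le b^{k+1}-1$, so $n + 1 \le b^{k+1}$, giving $k+1 \le \log_b(n+1)$. That is the key: since the top digit $a_k$ is at most $b-1$ and all lower digits are at most $b-1$, we get $n \le (b-1)(b^k + \cdots + 1) = b^{k+1} - 1$, hence $n+1 \le b^{k+1}$, so $k+1 \le \frac{\log(n+1)}{\log b}$, and $d_b(n) \le (b-1)(k+1) \le \frac{(b-1)\log(n+1)}{\log b}$.

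For the bound on $S_b(n)$, I would invoke the inequality of Drazin and Griffith \cite{DG52} quoted in the introduction (equation \eqref{eqn:S-ineq}), which states precisely $S_b(n) \le \frac{b-1}{2} n \log_b n$ for all $b \ge 2$ and $n \ge 1$; rewriting $\log_b n = \frac{\log n}{\log b}$ gives the claimed \eqref{eqn:Sbnineq}. Alternatively, for a self-contained argument I would prove it by summing the $d_b$ bound: $S_b(n) = \sum_{j=1}^{n-1} d_b(j) \le \sum_{j=1}^{n-1} \frac{(b-1)\log(j+1)}{\log b} \le \frac{(b-1)}{\log b}\int_1^{n} \log t\, dt \le \frac{(b-1)}{\log b} \cdot n \log n$ — but this overshoots the sharp constant $\frac{b-1}{2}$, so to get the stated bound with the $\frac12$ one really does need the Drazin–Griffith estimate (or its refinement via the exact Delange-type formula), and the lossy integral bound only gives $S_b(n) \le \frac{(b-1) n \log n}{\log b}$, not the version with $\frac12$.

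The only genuine obstacle is getting the constant $\frac{1}{2}$ in \eqref{eqn:Sbnineq} sharp; the honest and intended proof is to cite Drazin–Griffith \cite{DG52} (already flagged in the introduction as giving exactly \eqref{eqn:S-ineq}), and note equality at $n = b^k$ as recorded in \cite[Theorem 5.8]{LagM:2016}. The $d_b$ bound is elementary from the digit-count argument above, and the two lower bounds are trivial. So my plan is: (i) dispatch the lower bounds in one sentence each; (ii) prove the $d_b(n)$ upper bound via $n \le b^{k+1}-1 \Rightarrow k+1 \le \log_b(n+1)$ and $d_b(n) \le (b-1)(k+1)$; (iii) obtain the $S_b(n)$ upper bound by quoting \eqref{eqn:S-ineq} from Drazin–Griffith and rewriting the logarithm. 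I expect step (iii)'s reliance on an external citation to be the only non-routine point, and everything else to be a few lines.
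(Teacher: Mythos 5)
Your proposal is correct, and for the $d_b(n)$ upper bound it takes a genuinely different route from the paper. You argue directly from the digit expansion: with $k+1$ digits, $n \le (b-1)(1+b+\cdots+b^k) = b^{k+1}-1$, so $n+1 \le b^{k+1}$, hence $k+1 \le \log_b(n+1)$ and $d_b(n) \le (b-1)(k+1) \le (b-1)\log_b(n+1)$. The paper instead \emph{derives} the $d_b$ bound from the $S_b$ bound: it invokes Theorem \ref{thm:nub} to write $0 \le (b-1)\Gnu(n,b) = 2S_b(n+1) - (n+1)d_b(n)$, then applies Drazin--Griffith to $S_b(n+1)$ to conclude $d_b(n) \le \tfrac{2}{n+1}S_b(n+1) \le \tfrac{(b-1)\log(n+1)}{\log b}$. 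Your digit-counting argument is more elementary and self-contained (it does not depend on the nonnegativity of $\Gnu(n,b)$, i.e.\ on Theorem \ref{thm:nub}), whereas the paper's route is a short deduction exploiting machinery it has already built; both give exactly the stated bound. For the $S_b(n)$ upper bound you and the paper do the same thing — cite Drazin--Griffith \eqref{eqn:S-ineq} — and you are right that the naive summation of the $d_b$ bound loses the factor $\tfrac12$, so the citation is genuinely needed. The exploratory false starts in your write-up (the "actually the cleanest is\dots" detour) should be cut; the final argument you land on is the right one.
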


\begin{proof}
The lower bound in \eqref{eqn:dbnineq} follows from the observation that $d_b(n)$ is greater than or equal to the top (base-$b$) digit of $n$, which is at least $1$. The lower bound in \eqref{eqn:Sbnineq} then follows from the positivity of $d_b(j)$. 

The upper bound in \eqref{eqn:Sbnineq} is a result of Drazin and Griffith \cite[Theorem~1]{DG52}. To prove the upper bound in \eqref{eqn:dbnineq}, we apply Theorem \ref{thm:nub}:
$$
0\le(b-1)\Gnu(n,b)=2S_b(n)-(n-1)d_b(n)=2S_b(n+1)-(n+1)d_b(n).
$$
On replacing $n$ by $n+1$ in \eqref{eqn:Sbnineq}, we obtain $S_b(n+1)\le\frac{(b-1)(n+1)\log(n+1)}{2\log b}$. Hence
$$
d_b(n)\le\frac{2}{n+1}S_b(n+1)\le\frac{(b-1)\log(n+1)}{\log b},
$$
as desired.
\end{proof}

%
%

\subsection{The harmonic numbers $H_n$}  \label{subsec:22nn}

For  positive real numbers $x \ge 1$,  we consider the step function
$$
H(x):=\sum_{1\le b\le x}\frac{1}{b}.
$$
At integer values
$n= \lfloor x \rfloor$ we write $H(x)=H_{\lfloor x\rfloor}= H_n$, the $n$-th harmonic number.

%
%

\begin{lem}\label{lem:21n}
For all positive integers $n$, we have
\begin{equation}\label{eqn:harmonic}
H_n = \log n + \gamma + \frac{1}{2n} + O\left( \frac{1}{n^2} \right),
\end{equation}
where $\gamma \approx 0.57721$ is Euler's constant.
\end{lem}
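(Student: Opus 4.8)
The plan is to use the definition of Euler's constant as $\gamma = \lim_{n\to\infty}(H_n - \log n)$ together with a telescoping comparison that makes the rate of convergence explicit. First I would set $b_k := \frac{1}{k} - \log\frac{k+1}{k}$, note that $b_k \ge 0$ and, from $\log(1+\tfrac1k) = \tfrac1k - \tfrac{1}{2k^2} + O(k^{-3})$, that in fact $b_k = \tfrac{1}{2k^2} + O(k^{-3})$. The key observation is the telescoping identity $\sum_{k=1}^{n} b_k = H_n - \log(n+1)$. Letting $n\to\infty$ and using $H_n - \log n \to \gamma$ and $\log(1+\tfrac1n)\to 0$ shows the (positive, summable) series satisfies $\sum_{k\ge1} b_k = \gamma$, hence $H_n - \log(n+1) = \gamma - \sum_{k>n} b_k$.

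The second step is to estimate the two error pieces. From $b_k = \tfrac{1}{2k^2} + O(k^{-3})$ and the integral comparisons $\sum_{k>n}k^{-2} = \tfrac1n + O(n^{-2})$ and $\sum_{k>n}k^{-3} = O(n^{-2})$, I get $\sum_{k>n} b_k = \tfrac{1}{2n} + O(n^{-2})$; and separately $\log(1+\tfrac1n) = \tfrac1n + O(n^{-2})$. Substituting into $H_n = \log n + \log(1+\tfrac1n) + \gamma - \sum_{k>n}b_k$ and combining the two $\tfrac1n$ terms yields $H_n = \log n + \gamma + \tfrac{1}{2n} + O(n^{-2})$, as claimed.

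An alternative I would mention is to apply the Euler--Maclaurin summation formula to $f(t) = 1/t$ on $[1,n]$ with one Bernoulli correction term: this directly gives $H_n = \log n + \tfrac12\big(1 + \tfrac1n\big) + \tfrac{1}{12}\big(f'(n) - f'(1)\big) + R_n$ with $R_n$ a Bernoulli integral whose integrand involves $f''(t) = 2/t^3$, so that $R_n$ converges as $n\to\infty$ with tail $O(n^{-2})$; collecting all $n$-independent constants into one constant $c$ and invoking $H_n - \log n \to \gamma$ forces $c = \gamma$. Either way, the only real point of care --- and the main, rather minor, obstacle --- is the bookkeeping needed to confirm that the various tails are genuinely $O(n^{-2})$ and not merely $o(1)$; in both approaches this reduces to the elementary estimate $\sum_{k>n} k^{-3} = O(n^{-2})$ (respectively $\int_n^\infty t^{-3}\,dt = O(n^{-2})$).
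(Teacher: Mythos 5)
Your proof is correct. The paper itself does not actually prove this lemma --- it simply cites it as a standard result from Tenenbaum's book --- so your self-contained argument is a genuine addition rather than a reproduction. The telescoping route you take is clean: setting $b_k = \tfrac1k - \log\tfrac{k+1}{k}$, using $\sum_{k=1}^n b_k = H_n - \log(n+1)$ to identify $\sum_{k\ge 1} b_k = \gamma$, and then estimating the tail $\sum_{k>n} b_k = \tfrac{1}{2n} + O(n^{-2})$ via $b_k = \tfrac{1}{2k^2} + O(k^{-3})$ together with $\sum_{k>n} k^{-2} = \tfrac1n + O(n^{-2})$ and $\sum_{k>n} k^{-3} = O(n^{-2})$. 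The bookkeeping you flag as the main point of care is indeed handled correctly; both the $\log(1+\tfrac1n) = \tfrac1n + O(n^{-2})$ term and the half from the tail combine to give the stated $\tfrac{1}{2n}$. Your alternative via Euler--Maclaurin with one Bernoulli correction is also sound and is, in spirit, the route the paper gestures at just after the lemma when it discusses the refined formula $H(x) = \log x + \gamma + \tfrac{1-2\{x\}}{2x} + O(x^{-2})$ for real $x$; for integer $n$ that expression reduces to exactly the statement you proved.
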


\begin{proof}
This standard result appears in Tenenbaum \cite[Chapter I.0, Theorem 5]{Ten15}. 
\end{proof}

The restriction to integer $n$  is needed in Lemma \ref{lem:21n} because  for positive real numbers $x$, one has
$$
H(x)  - \log x - \gamma = \Omega_{\pm} \left( \frac{1}{x} \right).
$$
Indeed, using the Euler--Maclaurin summation formula \cite[Theorem~B.5]{MV07}, one can show that for real numbers $x\ge1$, 
$$
H(x)=\log x+\gamma+\frac{1-2\{x\}}{2x}+O\left(\frac{1}{x^2}\right).
$$
Hence, $\limsup_{x\rightarrow\infty}x(H(x)-\log x-\gamma)=\frac{1}{2}$ and
$\liminf_{x\rightarrow\infty}x(H(x)-\log x-\gamma)=-\frac{1}{2}$.\smallskip

%
%

\subsection{Estimates: $J(x)$}\label{subsec:22nn2}

For real numbers $x \ge 1$, we consider the step function
\begin{equation}\label{eqn:Jx} 
J(x):=\sum_{1 \le b\le x}\frac{\log b}{b}. 
\end{equation} 
At  integer values
$n= \lfloor x \rfloor$ we write $J(x)=J_{\lfloor x\rfloor}=J_n$.
The asymptotics of this step function of $x$  involve the first Stieltjes constant $\gamma_1$,
defined in  Section \ref{sec:13}.

%
%

\begin{lem}\label{lem:27}
For all real numbers $x\ge1$, we have
\begin{equation}\label{eqn:Jx1}
J(x)=\frac{1}{2}(\log x)^2+\gamma_1+O\left(\frac{\log(x+1)}{x}\right),
\end{equation}
where $\gamma_1 \approx- 0.0728158$  is the first Stieltjes constant.
\end{lem}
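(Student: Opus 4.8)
The plan is to estimate $J(x) = \sum_{1 \le b \le x} \frac{\log b}{b}$ by the same Euler--Maclaurin/partial-summation technique already used for $H(x)$ in Section \ref{subsec:22nn}, but now keyed to the definition \eqref{eqn:stieltjes} of the Stieltjes constant $\gamma_1$ rather than to Euler's constant. First I would reduce to integer arguments: writing $n = \lfloor x \rfloor$, we have $J(x) = J_n$, and since $0 \le \log x - \log n \le \log\frac{n+1}{n} = O(1/n)$ and $(\log x)^2 - (\log n)^2 = O\!\left(\frac{\log(x+1)}{x}\right)$, it suffices to prove
\begin{equation}\label{eqn:Jn-integer}
J_n = \frac{1}{2}(\log n)^2 + \gamma_1 + O\!\left(\frac{\log(n+1)}{n}\right)
\end{equation}
for integers $n \ge 1$ and then transfer back to real $x$ at the cost of the stated error term.

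For \eqref{eqn:Jn-integer} I would apply the Euler--Maclaurin summation formula (as cited, \cite[Theorem~B.5]{MV07}) to the function $t \mapsto \frac{\log t}{t}$ on $[1,n]$. This gives
\begin{equation}\label{eqn:EM-apply}
\sum_{b=1}^{n} \frac{\log b}{b} = \int_1^n \frac{\log t}{t}\, dt + \frac{1}{2}\cdot\frac{\log n}{n} + \int_1^n \left(\{t\} - \tfrac12\right)\left(\frac{\log t}{t}\right)' dt,
\end{equation}
where $\int_1^n \frac{\log t}{t}\, dt = \frac{1}{2}(\log n)^2$ gives the main term. Since $\left(\frac{\log t}{t}\right)' = \frac{1 - \log t}{t^2}$, the remaining integral converges absolutely as $n \to \infty$; call its limiting value $c$, so that the tail satisfies $\int_n^\infty \left(\{t\}-\tfrac12\right)\frac{1-\log t}{t^2}\, dt = O\!\left(\frac{\log n}{n}\right)$. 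Together with $\frac{\log n}{n} = O\!\left(\frac{\log n}{n}\right)$ this yields $J_n = \frac{1}{2}(\log n)^2 + c + O\!\left(\frac{\log(n+1)}{n}\right)$ for some absolute constant $c$.

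The final step is to identify the constant $c$ with $\gamma_1$, and this is the only genuinely non-routine point. The cleanest route is to compare the just-derived asymptotic expansion with the defining formula \eqref{eqn:stieltjes} for $m=1$, namely $\gamma_1 = \lim_{n\to\infty}\left(\sum_{k=1}^n \frac{\log k}{k} - \frac{(\log n)^{2}}{n+1}\right)$: since $\frac{(\log n)^2}{n+1} \to 0$, this says precisely $\gamma_1 = \lim_{n\to\infty}\left(J_n - \frac{1}{2}(\log n)^2\right)$, wait --- one must be careful, as \eqref{eqn:stieltjes} subtracts $\frac{(\log n)^{m+1}}{n+1}$ rather than $\frac{1}{2}(\log n)^{m+1}$; for $m=1$ the subtracted term $\frac{(\log n)^2}{n+1}$ tends to $0$, hence is irrelevant to the limit, and we directly obtain $\gamma_1 = \lim_{n\to\infty}\bigl(J_n - \tfrac12(\log n)^2\bigr) = c$. (Equivalently, one may invoke the Laurent expansion $\zeta(s) = \frac{1}{s-1} + \gamma_0 - \gamma_1(s-1) + \cdots$ and recognize $\sum \frac{\log k}{k}$ as the regularized value $-\zeta'(1)$-type quantity, but the elementary comparison with \eqref{eqn:stieltjes} is shorter.) This pins down $c = \gamma_1$ and completes the proof of \eqref{eqn:Jx1}. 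The main obstacle is purely bookkeeping: matching the normalization in the definition \eqref{eqn:stieltjes} of $\gamma_1$ to the constant that falls out of Euler--Maclaurin, and keeping the error term uniform in the passage from integer $n$ to real $x$.
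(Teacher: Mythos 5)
Your overall route is correct and genuinely different from the paper's. The paper obtains \eqref{eqn:Jx1} by the partial-summation identity $J(x)=(\log x)H(x)-\int_1^x \frac{H(u)}{u}\,du$ and then substitutes the already-established expansion $H(u)=\log u+\gamma+R(u)$, $R(u)\ll\frac{1}{u}$; you instead apply Euler--Maclaurin directly to $t\mapsto\frac{\log t}{t}$. Both are standard and yield the same error term, but yours is self-contained (no reliance on Lemma \ref{lem:21n}), while the paper's leverages the $H(u)$ estimate it already has and is a bit shorter. Either way the heart of the matter is identifying the emergent constant $c$ with $\gamma_1$, and both proofs do this by appealing to \eqref{eqn:stieltjes}.

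On that identification step, however, your ``reconciliation'' is not sound. You observe that \eqref{eqn:stieltjes} subtracts $\frac{(\log n)^{m+1}}{n+1}$, note for $m=1$ that this tends to $0$, and conclude it is ``irrelevant to the limit'' so that $\gamma_1=\lim_n\bigl(J_n-\tfrac12(\log n)^2\bigr)$. That inference does not follow: if the subtracted term really were $\frac{(\log n)^2}{n+1}\to 0$, then the limit in \eqref{eqn:stieltjes} would be $\lim_n J_n=+\infty$, not $\gamma_1$. What is actually going on is that \eqref{eqn:stieltjes} contains a typo --- the denominator should be $m+1$, not $n+1$, i.e.
\begin{equation*}
\gamma_m=\lim_{n\to\infty}\biggl(\sum_{k=1}^n\frac{(\log k)^m}{k}-\frac{(\log n)^{m+1}}{m+1}\biggr),
\end{equation*}
which for $m=1$ is exactly $\gamma_1=\lim_n\bigl(J_n-\tfrac12(\log n)^2\bigr)=c$, the relation you (and the paper's proof) actually use. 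So your final answer is right, but you should name the typo rather than attempt to argue your way around the formula as literally written. The rest of the argument --- the Euler--Maclaurin main term, the absolute convergence and $O(\frac{\log n}{n})$ tail bound for $\int_n^\infty(\{t\}-\tfrac12)\frac{1-\log t}{t^2}\,dt$, and the transfer from integer $n=\lfloor x\rfloor$ back to real $x$ at cost $O(\frac{\log(x+1)}{x})$ --- is all fine.
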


\begin{proof}
By partial summation, we obtain
\begin{equation}\label{PartialSum}
J(x)=\sum_{1 \le b\le x}\frac{\log b}{b}=(\log x)H(x)-\int_1^x\frac{H(u)}{u}\,du.
\end{equation}
It is well-known that $H(u)=\log u+\gamma+R(u)$, where the remainder $R(u)\ll\frac{1}{u}$, for $u\ge1$. (See \cite[Corollary~1.15]{MV07}.) 
On inserting this in \eqref{PartialSum} and rearranging, we get
\begin{align*}
J(x)&=\frac{1}{2}(\log x)^2  + (\log x) R(x)     - \bigg( \int_1^\infty\frac{R(u)}{u}\,du-\int_x^\infty\frac{R(u)}{u}\,du\bigg) \\
&=\frac{1}{2}(\log x)^2+c+O\left(\frac{\log(x+1)}{x}\right),
\end{align*}
where $c:=-\int_1^\infty\frac{R(u)}{u}\,du$. By taking $x\rightarrow\infty$, we see that
$$c=\lim_{x \to \infty} \left(J(x) - \frac{1}{2} (\log x)^2\right) = \lim_{x\to \infty}\bigg(\sum_{b\le x}\frac{\log b}{b}-\frac{1}{2}(\log x)^2\bigg)=\gamma_1,$$
the first Stieltjes constant, according to  \eqref{eqn:stieltjes}.
\end{proof}

%
%

\subsection{Estimates: $\oC(n,x)$}  \label{subsec:22nn3}

\par For  real numbers $n\ge 1$ and $x \ge 1$, let
\begin{equation}\label{eqn:oC-defn}
\oC(n,x):=\sum_{1 \le b\le x}\left\lfloor\frac{n}{b}\right\rfloor\log b.
\end{equation}
Here, $\oC(n,x)$  is a nonnegative step function of the real variable $x$,
viewing $n$ as fixed.
This function stabilizes for  $x \ge n$:
\begin{equation}\label{eqn:oCstab}
\oC(n,x)=\oC(n,n)\quad\mbox{for}\quad x\ge n.
\end{equation}

%
%

\begin{prop}\label{prop:23n}
\emph{(1)} For all real numbers $n\ge2$, we have
$$
\oC(n,n)=\frac{1}{2}n(\log n)^2+(\gamma-1)n\log n+(1-\gamma)n+O\left(\sqrt{n}\log n\right).
$$

\emph{(2)}  For all real numbers $n\ge2$ and $x$ such that $1\le x\le n$, we have
\begin{align}\label{eqn:oCn-diff}
\oC(n,n)-\oC(n,x)&=\int_x^n\left\lfloor\frac{n}{u}\right\rfloor\log u\,du+O\left(\frac{n\log n}{x}\right).
\end{align}
In addition,
\begin{equation} \label{eqn:oCn-integral} 
\int_x^n \left\lfloor\frac{n}{u}\right\rfloor\log u\,du = 
\left(H_{\left\lfloor\frac{n}{x}\right\rfloor}-\frac{x}{n}\left\lfloor\frac{n}{x}\right\rfloor\right)(n\log n-n) 
- \left( J_{\left\lfloor\frac{n}{x}\right\rfloor}- \frac{x}{n}\left\lfloor\frac{n}{x}\right\rfloor \log\frac{n}{x}
\right)n.
\end{equation}
\end{prop}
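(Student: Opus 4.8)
The plan is to prove the three assertions in the order (3), (2), (1): the exact evaluation (3) and the sum-to-integral comparison (2) together give (1) by specialization, and (2) is where essentially all the work lies.

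\emph{Part (3).} I would start from $\lfloor n/u\rfloor = \#\{j\ge 1 : j\le n/u\}$, valid for $u>0$. Since $\log u \ge 0$ on $[x,n]\subseteq[1,\infty)$, all terms are nonnegative and we may interchange the sum over $j$ with the integral: the $j$-th term contributes $\int_x^{n/j}\log u\,du$ when $n/j\ge x$ and nothing otherwise, and $n/j\ge x\iff j\le n/x$, so
\[
\int_x^n\left\lfloor\frac nu\right\rfloor\log u\,du=\sum_{1\le j\le n/x}\int_x^{n/j}\log u\,du.
\]
Each integral is $\int_x^{y}\log u\,du=y\log y-y-x\log x+x$; setting $M:=\lfloor n/x\rfloor$ and summing, the $\frac nj\log\frac nj$ pieces give $nH_M\log n-nJ_M$ (by the definitions $H_M=\sum_{j\le M}\frac1j$ and $J_M=\sum_{j\le M}\frac{\log j}{j}$), the $-\frac nj$ pieces give $-nH_M$, and the $-(x\log x-x)$ pieces give $-M(x\log x-x)=xM(1-\log x)$. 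Rewriting $xM=\frac xn M\cdot n$ and $\log\frac nx=\log n-\log x$ matches this with the claimed closed form. This step is purely mechanical.

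\emph{Part (2).} I would apply the same co-divisor decomposition to the sum itself: writing $\lfloor n/b\rfloor=\#\{j\ge1:jb\le n\}$ and interchanging,
\[
\oC(n,n)-\oC(n,x)=\sum_{x<b\le n}\left\lfloor\frac nb\right\rfloor\log b=\sum_{j\ge1}\ \sum_{x<b\le n/j}\log b,
\]
with only the (at most $\lfloor n/x\rfloor$) values $j\le n/x$ contributing. For each such $j$ I would compare the inner sum with $\int_x^{n/j}\log u\,du$: since $\log$ is nondecreasing and nonnegative on $[1,\infty)$ (or, equivalently, by Stirling's formula $\sum_{b\le y}\log b=y\log y-y+O(\log(y+1))$ applied at $y=n/j$ and $y=x$), the two differ by $O(\log(n+1))=O(\log n)$. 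Summing over the at most $n/x$ relevant $j$, the total discrepancy is $O(\frac nx\log n)$, and the resulting sum of integrals is exactly $\int_x^n\lfloor n/u\rfloor\log u\,du$ by the computation in part (3). The only point requiring care is the decision to expand along the co-divisor $j$: a direct sum-to-integral comparison for the erratic step function $u\mapsto\lfloor n/u\rfloor\log u$ would need its total variation on $[x,n]$, which is of order $\frac nx\log x$ — the same size, but more awkward to control.

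\emph{Part (1).} I would take $x=\sqrt n$ in parts (2) and (3). The short range is estimated directly: $\oC(n,\sqrt n)=\sum_{b\le\sqrt n}\big(\frac nb-\{n/b\}\big)\log b=nJ(\sqrt n)+O(\sqrt n\log n)$, since the fractional-part sum is trivially $O(\sqrt n\log n)$, and $J(\sqrt n)=\frac18(\log n)^2+\gamma_1+O(\frac{\log n}{\sqrt n})$ by Lemma~\ref{lem:27}. The long range is $\oC(n,n)-\oC(n,\sqrt n)=\int_{\sqrt n}^n\lfloor n/u\rfloor\log u\,du+O(\sqrt n\log n)$ by part (2), and evaluating the integral by part (3) with $M=\lfloor\sqrt n\rfloor$, $\frac{\sqrt n}{n}\lfloor\sqrt n\rfloor=1+O(\frac1{\sqrt n})$, $H_{\lfloor\sqrt n\rfloor}=\frac12\log n+\gamma+O(\frac1{\sqrt n})$ (Lemma~\ref{lem:21n}) and $J_{\lfloor\sqrt n\rfloor}=\frac18(\log n)^2+\gamma_1+O(\frac{\log n}{\sqrt n})$ gives $\frac38 n(\log n)^2+(\gamma-1)n\log n+(1-\gamma-\gamma_1)n+O(\sqrt n\log n)$. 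Adding the two ranges, the $\gamma_1 n$ terms cancel and one is left with $\frac12 n(\log n)^2+(\gamma-1)n\log n+(1-\gamma)n+O(\sqrt n\log n)$. (Alternatively part (1) can be proved on its own by the Dirichlet hyperbola method, splitting $\sum_{bm\le n}\log b$ at $b,m=\sqrt n$ and invoking Stirling together with Lemmas~\ref{lem:21n} and \ref{lem:27}; in either route the split at $\sqrt n$ is precisely what produces the power-saving $O(\sqrt n\log n)$.) There is no conceptual obstacle; the main nuisance is tracking $\gamma$ and $\gamma_1$ through the split so that they recombine with the correct coefficients.
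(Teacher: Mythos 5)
Your proposal is correct, and it takes a genuinely different route from the paper's. The paper's proof is built around an exact functional-equation identity (Lemma~\ref{lem:oCfe}) relating $\oC(n,n)$, $\oC(n,x)$, and $\oC(n,n/x)$, proved by summing a logarithmic identity over $b\le x$; part (1) then falls out by setting $x=\sqrt n$ (two terms cancel), and part (2) comes from substituting $x\mapsto n/x$ and estimating each resulting term, after which \eqref{eqn:oCn-integral} is verified separately via the change of variables $v=n/u$ and a partial summation. You instead use the co-divisor (Dirichlet hyperbola) decomposition $\lfloor n/u\rfloor=\sum_{j\le n/u}1$ directly, both to evaluate the integral in closed form in one stroke (your ``part (3)'') and to compare $\sum_{x<b\le n}\lfloor n/b\rfloor\log b$ with the integral term-by-term in $j$, with Stirling controlling each discrepancy at cost $O(\log n)$; part (1) then follows by splitting at $\sqrt n$, with $\oC(n,\sqrt n)$ estimated directly as the paper does in Lemma~\ref{cor:24}. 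Both routes rest on the same inputs (Stirling, and the asymptotics of $H$ and $J$ from Lemmas~\ref{lem:21n} and \ref{lem:27}), but your version is more direct and avoids introducing the intermediate identity of Lemma~\ref{lem:oCfe}; the paper's functional equation, on the other hand, packages the cancellation at $x=\sqrt n$ more transparently and is reusable verbatim for both parts. Your bookkeeping in part (1) — the cancellation of the $\gamma_1 n$ terms between the two ranges and the recombination of the coefficients of $n(\log n)^2$, $n\log n$, and $n$ — checks out.
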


To prove Proposition \ref{prop:23n} we use the following identity. 

%
%

\begin{lem}\label{lem:oCfe}
For all real numbers $n \ge 1$ and $x \ge 1$, we have
\begin{eqnarray}
\oC(n,n)+\oC(n,x)-\oC\left(n,\frac{n}{x}\right)&=&(\log n)\bigg(\sum_{1 \le b\le x}\left\lfloor\frac{n}{b}\right\rfloor\bigg) -\lfloor x\rfloor\log\left(\left\lfloor\frac{n}{x}\right\rfloor!\right)-nH(x)\nonumber\\
&&+\lfloor x\rfloor+\sum_{1 \le b\le x}\int_1^\frac{n}{b}\frac{\{u\}}{u}\,du.\label{eqn:oCfe}
\end{eqnarray}
\end{lem}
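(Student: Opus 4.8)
The plan is to exploit the classical Dirichlet hyperbola-type identity for the divisor-sum $\sum_{b\le x}\lfloor n/b\rfloor$, adapted to carry the weight $\log b$. First I would observe that $\oC(n,n) = \sum_{1\le b\le n}\lfloor n/b\rfloor\log b$ counts, with weight $\log b$, lattice points $(b,c)$ under the hyperbola $bc\le n$ in the strip $1\le b\le n$; equivalently $\oC(n,n)=\sum_{bc\le n}\log b$. The standard move is to split this double sum according to whether $b\le x$ or $c\le n/x$ (the two regions overlap in the rectangle $b\le x$, $c\le n/x$), giving
\begin{equation}
\sum_{bc\le n}\log b = \sum_{1\le b\le x}\left\lfloor\frac{n}{b}\right\rfloor\log b + \sum_{1\le c\le n/x}\;\sum_{b\le n/c}\log b - \sum_{1\le b\le x}\;\sum_{1\le c\le n/x}\log b,
\end{equation}
which is $\oC(n,x) + \sum_{c\le n/x}\log\big(\lfloor n/c\rfloor!\big) - \lfloor x\rfloor\log\big(\lfloor n/x\rfloor!\big)$. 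The middle term is not yet $\oC(n,n/x)$; I would rewrite $\log(\lfloor n/c\rfloor!) = \sum_{b\le n/c}\log b$ and apply Stirling in integral form, $\log(m!) = \int_1^{m}\log u\,du + (\text{boundary/error})$, more precisely using $\log(\lfloor t\rfloor!) = \sum_{b\le t}\log b$ together with the elementary identity $\sum_{b\le t}\log b = \lfloor t\rfloor\log t - \int_1^{t}\frac{\lfloor u\rfloor}{u}\,du = \lfloor t\rfloor \log t - t + 1 + \int_1^t \frac{\{u\}}{u}\,du$ — no asymptotics, just the exact partial-summation identity for $\log$.

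The key computation is then to recognize the sum $\sum_{c\le n/x}\log(\lfloor n/c\rfloor!)$ after this substitution. Writing $\log(\lfloor n/c\rfloor!)=\sum_{b\le n/c}\log b$ and swapping the order of summation over $b$ and $c$ turns $\sum_{c\le n/x}\sum_{b\le n/c}\log b$ into $\sum_{b}\big(\#\{c\le n/x : bc\le n\}\big)\log b = \sum_{1\le b\le n/x}\lfloor n/b\rfloor\log b$ — wait, one must be careful: the constraint is $c\le n/x$ AND $c\le n/b$, so the count is $\min(\lfloor n/x\rfloor,\lfloor n/b\rfloor)$, which equals $\lfloor n/b\rfloor$ precisely when $b\ge x$ and equals $\lfloor n/x\rfloor$ when $b\le x$. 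So the swap gives $\oC(n,n) - \oC(n,x) + \lfloor n/x\rfloor\cdot(\text{something})$; reassembling and moving the $\oC(n,n)$ and $\oC(n,x)$ pieces to the left produces the stated identity, with the $-nH(x)+\lfloor x\rfloor$ and $\sum_{b\le x}\int_1^{n/b}\frac{\{u\}}{u}\,du$ terms emerging exactly from applying the exact Stirling-type partial-summation identity above to each $\log(\lfloor n/b\rfloor!)$ for $b\le x$. Concretely: $\sum_{1\le b\le x}\log\big(\lfloor n/b\rfloor!\big) = \sum_{1\le b\le x}\Big(\lfloor n/b\rfloor\log\frac{n}{b} - \frac{n}{b} + 1 + \int_1^{n/b}\frac{\{u\}}{u}\,du\Big)$, and $\sum_{b\le x}\lfloor n/b\rfloor\log\frac{n}{b} = (\log n)\sum_{b\le x}\lfloor n/b\rfloor - \oC(n,x)$, while $\sum_{b\le x}\frac{n}{b} = nH(x)$ and $\sum_{b\le x}1 = \lfloor x\rfloor$.

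I would then simply collect terms: the $(\log n)\sum_{b\le x}\lfloor n/b\rfloor$, the $-nH(x)$, the $+\lfloor x\rfloor$, and the $\sum_{b\le x}\int_1^{n/b}\frac{\{u\}}{u}\,du$ all appear on the right-hand side of \eqref{eqn:oCfe}; the term $-\lfloor x\rfloor\log(\lfloor n/x\rfloor!)$ is what is left over from the overlap rectangle; and the terms $\oC(n,n)$, $\oC(n,x)$, $-\oC(n,n/x)$ are matched up by the hyperbola decomposition. The main obstacle — really the only delicate point — is bookkeeping the boundary behavior of the floor functions in the min/overlap step (ensuring that $\#\{c\le n/x: bc\le n\}$ is handled consistently for $b$ just below versus just above $x$, and that $\sum_{c\le n/x}\sum_{b\le n/c}$ versus $\sum_{b\le n/x}\sum_{c\le n/b}$ give literally equal sums since both count $\{(b,c): bc\le n,\ b\le n/x,\ c\le n/x\}$ wait — actually $\{(b,c): bc\le n,\ c\le n/x\}$, whose $b$-range is then unrestricted up to $\lfloor n/c\rfloor$). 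Everything else is the exact (error-free) partial summation identity for $\sum_{b\le t}\log b$ and straightforward rearrangement; no estimates are needed at this stage, so the identity holds exactly for all real $n\ge 1$, $x\ge 1$.
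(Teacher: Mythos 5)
Your plan is close in spirit to the paper's own proof (both ultimately reduce to computing $\sum_{1\le b\le x}\log(\lfloor n/b\rfloor!)$ in two ways and comparing), and the final paragraph's exact Stirling identity
\[
\log(\lfloor t\rfloor!) \;=\; \lfloor t\rfloor\log t - t + 1 + \int_1^{t}\frac{\{u\}}{u}\,du
\]
summed over $b\le x$ with $t=n/b$ is correct and is exactly the input the paper uses. However, there is a genuine gap in the first two paragraphs that prevents the argument from closing.

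Your hyperbola decomposition is set up with the weight $\log b$ on the same variable that carries the $b\le x$ cutoff. First, the overlap rectangle then contributes $\sum_{b\le x}\sum_{c\le n/x}\log b = \lfloor n/x\rfloor\,\log(\lfloor x\rfloor!)$, \emph{not} $\lfloor x\rfloor\,\log(\lfloor n/x\rfloor!)$ as written. Second, and more fatally, once you swap summation in the middle term $\sum_{c\le n/x}\log(\lfloor n/c\rfloor!)$ — a computation you do correctly, obtaining $(\oC(n,n)-\oC(n,x))+\lfloor n/x\rfloor\log(\lfloor x\rfloor!)$ — the entire hyperbola identity collapses to the tautology $\oC(n,n)=\oC(n,n)$. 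In particular $\oC(n,n/x)$ never appears, and your last paragraph's Stirling expansion of the \emph{different} quantity $\sum_{b\le x}\log(\lfloor n/b\rfloor!)$ is never connected to it, so the lemma is not reached.

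The fix is to put the weight on the dual variable: write $\oC(n,n)=\sum_{bc\le n}\log c$ and apply the hyperbola with the cutoff $b\le x$ on the unweighted variable. The three pieces are then
\[
\sum_{b\le x}\sum_{c\le n/b}\log c \;=\; \sum_{b\le x}\log(\lfloor n/b\rfloor!),\qquad
\sum_{c\le n/x}\sum_{b\le n/c}\log c \;=\; \oC\!\left(n,\tfrac{n}{x}\right),\qquad
\sum_{b\le x}\sum_{c\le n/x}\log c \;=\; \lfloor x\rfloor\log(\lfloor n/x\rfloor!).
\]
Equivalently — and this is what the paper does — bypass the hyperbola entirely and compute $\sum_{b\le x}\log(\lfloor n/b\rfloor!) = \sum_{b\le x}\sum_{k\le n/b}\log k$ by swapping the order: the $b$-count is $\min(\lfloor x\rfloor,\lfloor n/k\rfloor)$, which equals $\lfloor x\rfloor$ for $k\le n/x$ and $\lfloor n/k\rfloor$ for $k> n/x$, giving $\lfloor x\rfloor\log(\lfloor n/x\rfloor!) + \oC(n,n)-\oC(n,n/x)$. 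Equating with your Stirling expansion and rearranging yields \eqref{eqn:oCfe}. So you have all the right pieces; you applied the swap to the wrong double sum and miscomputed the overlap.
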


\begin{proof}
By partial summation, we have the identity
$$
\sum_{1 \le k\le t}\log\frac{t}{k}=t-1-\int_1^t\frac{\{u\}}{u}\,du,
$$
for any $t>0$. On setting $t=\frac{n}{b}$  in this identity and summing over positive integers $b\le x$, we obtain
\begin{equation}\label{eqn:212}
\sum_{1 \le b\le x}\bigg(\sum_{1 \le k\le\frac{n}{b}}\log\frac{n}{bk}\bigg) =nH(x)-\lfloor x\rfloor-\sum_{1 \le b\le x}\int_1^\frac{n}{b}\frac{\{u\}}{u}\,du.
\end{equation}
The double sum on the left side of \eqref{eqn:212} is equal to
$$
\sum_{1 \le b\le x}\sum_{1 \le k\le\frac{n}{b}}(\log n-\log b- \log k)
=(\log n) \bigg( \sum_{1 \le b\le x}\left\lfloor\frac{n}{b}\right\rfloor\bigg) -\oC(n,x)-\sum_{1 \le k\le n}\bigg(\sum_{\substack{1 \le b\le x\\ b\le\frac{n}{k}}}\log k\bigg).
$$
On substituting the right side into \eqref{eqn:212} and rearranging, we obtain
\begin{equation}\label{eqn:213}
\sum_{1\le k\le n}\bigg(\sum_{\substack{1 \le b\le x\\b\le\frac{n}{k}}}\log k\bigg) =(\log n)\bigg(\sum_{1 \le b\le x}\left\lfloor\frac{n}{b}\right\rfloor\bigg) -\oC(n,x)-nH(x)+\lfloor x\rfloor+\sum_{1 \le b\le x}\int_1^\frac{n}{b}\frac{\{u\}}{u}\,du.
\end{equation}
The double sum on the left side of \eqref{eqn:213} is equal to
$$
\sum_{\frac{n}{x}<k\le n}\bigg(\sum_{1 \le b\le\frac{n}{k}}\log k\bigg) +\sum_{1 \le k\le\frac{n}{x}}\bigg(\sum_{1\le b\le x}\log k\bigg) =
\left(\oC(n,n)-\oC\left(n,\frac{n}{x}\right)\right)+\lfloor x\rfloor\log\left(\left\lfloor\frac{n}{x}\right\rfloor!\right).
$$
On inserting the right side into \eqref{eqn:213} and rearranging, we get \eqref{eqn:oCfe}.
\end{proof}

\begin{proof}[Proof of Proposition \ref{prop:23n}]
(1) On substituting $x=\sqrt{n}$ in Lemma \ref{lem:oCfe}, two of the terms on the left side cancel and we get
\begin{equation}\label{eq:oCnn-est} 
\oC(n,n)= (\log n)\bigg( \sum_{1 \le b\le\sqrt{n}}\left\lfloor\frac{n}{b}\right\rfloor \bigg)- \left\lfloor\sqrt{n}\right\rfloor\log\left(\left\lfloor\sqrt{n}\right\rfloor!\right)-nH\left(\sqrt{n}\right)+\left\lfloor\sqrt{n}\right\rfloor+\sum_{1 \le b\le\sqrt{n}}\int_1^\frac{n}{b}\frac{\{u\}}{u}\,du.
\end{equation}
Now, we estimate each term on the right of \eqref{eq:oCnn-est}. For the first term, we use $\lfloor t\rfloor=t+O(1)$, obtaining
\begin{eqnarray*}
(\log n) \bigg(\sum_{1 \le b\le\sqrt{n}}\left\lfloor\frac{n}{b}\right\rfloor \bigg)&=& 
(\log n) 
n H_{\lfloor\sqrt{n}\rfloor} + 
O\left(\sqrt{n}\log n\right)\\
&=& n 
(\log n) 
\left( \log \left\lfloor \sqrt{n}\right\rfloor  + \gamma +O\left(\frac{1}{\left\lfloor \sqrt{n} \right\rfloor}\right) \right) +O \left( \sqrt{n} \log n \right)\\
&=& \frac{1}{2} n (\log n)^2 +\gamma n \log n + O \left( \sqrt{n} \log n \right),
\end{eqnarray*}
where we used Lemma \ref{lem:21n} to estimate $H_{\lfloor\sqrt{n}\rfloor}$. 
For the second term, Stirling's formula gives
$$
\left\lfloor\sqrt{n}\right\rfloor\log\left(\left\lfloor\sqrt{n}\right\rfloor!\right) = \frac{1}{2} n \log n - n + O \left( \sqrt{n} \log n\right).
$$
For the third term, the harmonic number estimate in Lemma \ref{lem:21n} gives
$$
nH\left(\sqrt{n}\right)  = \frac{1}{2} n \log n + \gamma n + O \left(\sqrt{n} \right).
$$
The last two terms are negligible:
$$
\left\lfloor\sqrt{n}\right\rfloor+\sum_{1 \le b\le\sqrt{n}}\int_1^\frac{n}{b}\frac{\{u\}}{u}\,du \le\sqrt{n} + \sum_{1 \le b \le \sqrt{n} } \int_1^n\frac{1}{u}\,du  = O \left( \sqrt{n} \log n  \right). 
$$ 
Substituting these estimates into the right side of \eqref{eq:oCnn-est} yields
$$
\oC(n,n)= \frac{1}{2} n (\log n)^2 +(\gamma-1) n \log n+(1 -\gamma)n +O \left( \sqrt{n} \log n \right).
$$

(2) We will prove that for $2 \le x\le n$ 
\begin{equation}\label{eqn:2.16v32}
\oC(n,n)-\oC(n,x)=\left(H_{\left\lfloor\frac{n}{x}\right\rfloor}-\frac{x}{n}\left\lfloor\frac{n}{x}\right\rfloor\right)(n\log n-n)-\left(J_{\left\lfloor\frac{n}{x}\right\rfloor}-\frac{x}{n}\left\lfloor\frac{n}{x}\right\rfloor\log\frac{n}{x}\right)n+O\left(\frac{n\log n}{x}\right)
\end{equation}
and then deduce \eqref{eqn:oCn-integral}.\\

First, we prove \eqref{eqn:2.16v32}. We replace $x$ by $\frac{n}{x}$ in Lemma \ref{lem:oCfe}, and rearrange  a term to obtain
\begin{eqnarray}
\oC(n,n)-\oC(n,x)&=&\oC\left(n,\frac{n}{x}\right) +(\log n)\bigg(\sum_{1\le b\le\frac{n}{x}}\left\lfloor\frac{n}{b}\right\rfloor\bigg)
-\left\lfloor\frac{n}{x}\right\rfloor\log(\lfloor x\rfloor!)-nH_{\left\lfloor\frac{n}{x}\right\rfloor}\nonumber\\
&&+\bigg(\left\lfloor\frac{n}{x}\right\rfloor+\sum_{1\le b\le\frac{n}{x}}\int_1^\frac{n}{b}\frac{\{u\}}{u}\,du\bigg).\label{eqn:2.17v32}
\end{eqnarray}
We estimate the terms on the right side of  \eqref{eqn:2.17v32}.
For the first term, using $\lfloor t\rfloor=t+O(1)$, we see that
$$
\oC\left(n,\frac{n}{x}\right)=\sum_{1\le b\le\frac{n}{x}}\left\lfloor\frac{n}{b}\right\rfloor\log b=nJ_{\left\lfloor\frac{n}{x}\right\rfloor}+
O\left(\log\left(\left\lfloor\frac{n}{x}\right\rfloor!\right)+1\right).
$$
Using the bounds
$$
0 \le  \log\left(\left\lfloor\frac{n}{x}\right\rfloor!\right)\le\left\lfloor\frac{n}{x}\right\rfloor\log\left\lfloor\frac{n}{x}\right\rfloor\le\frac{n\log n}{x},
$$
 we obtain the estimate
\begin{equation}\label{eqn:2.18v32}
\oC\left(n,\frac{n}{x}\right)=nJ_{\left\lfloor\frac{n}{x}\right\rfloor}+O\left(\frac{n\log n}{x}\right).
\end{equation}
For the second term, again using $\lfloor t\rfloor=t+O(1)$  we obtain
\begin{equation}\label{eqn:2.19v32}
(\log n)\bigg( \sum_{1\le b\le\frac{n}{x}}\left\lfloor\frac{n}{b}\right\rfloor\bigg)=n(\log n)H_{\left\lfloor\frac{n}{x}\right\rfloor}+O\left(\frac{n\log n}{x}\right).
\end{equation}
For the third term we assert
\begin{equation}\label{eqn:2.20v32}
\left\lfloor\frac{n}{x}\right\rfloor\log(\lfloor x\rfloor!)=n(\log n)\frac{x}{n}\left\lfloor\frac{n}{x}\right\rfloor-n\frac{x}{n}\left\lfloor\frac{n}{x}\right\rfloor\left(1+\log\frac{n}{x}\right)+O\left(\frac{n\log n}{x}\right).
\end{equation}
This estimate follows using Stirling's formula with remainder in the form, for $x \ge 2$,
\begin{equation}\label{eqn:Stirling} 
\log\left(\lfloor x\rfloor!\right)=x\log x-x+O(\log x),
\end{equation} 
which yields 
$$
\left\lfloor\frac{n}{x}\right\rfloor\log(\lfloor x\rfloor!)=\left\lfloor\frac{n}{x}\right\rfloor(x\log x-x)+O\left(\frac{n\log x}{x}\right),
$$
and \eqref{eqn:2.20v32} follows.
For the final term we have, for $n \ge 2$ and $2 \le x\le n$, 
\begin{equation}\label{eqn:2.21v32}
\left\lfloor\frac{n}{x}\right\rfloor+\sum_{1\le b\le\frac{n}{x}}\int_1^\frac{n}{b}\frac{\{u\}}{u}\,du\le \frac{n}{x}+\sum_{1\le b\le\frac{n}{x}}\int_1^n\frac{1}{u}\,du = O \left( \frac{n\log n}{x} \right).
\end{equation}
On inserting \eqref{eqn:2.18v32}, \eqref{eqn:2.19v32}, \eqref{eqn:2.20v32}, and \eqref{eqn:2.21v32} into \eqref{eqn:2.17v32} and rearranging, we obtain \eqref{eqn:2.16v32}.\\

Next, we prove \eqref{eqn:oCn-integral}. By the substitution $v=\frac{n}{u}$, we get
\begin{equation}\label{eqn:2.22v32}
\int_x^n\left\lfloor\frac{n}{u}\right\rfloor\log u\,du=n\int_1^\frac{n}{x}\frac{\lfloor v\rfloor}{v^2}\log\frac{n}{v}\,dv.
\end{equation}
The integral has a closed form quadrature:
$$
\frac{d}{dv}\left(\frac{1}{v}-\frac{1}{v}\log\frac{n}{v}\right)=\frac{1}{v^2}\log\frac{n}{v}
$$
valid on unit intervals $b \le v < b+1$ where $\lfloor v \rfloor =b$.
By  partial summation, the right side of \eqref{eqn:2.22v32} is then equal to
$$
n\left\lfloor\frac{n}{x}\right\rfloor\left(\frac{x}{n}-\frac{x}{n}\log x\right)-n\sum_{1\le b\le\frac{n}{x}}\left(\frac{1}{b}-\frac{1}{b}\log\frac{n}{b}\right)=x(1-\log x)\left\lfloor\frac{n}{x}\right\rfloor+(n\log n-n)H_{\left\lfloor\frac{n}{x}\right\rfloor}-nJ_{\left\lfloor\frac{n}{x}\right\rfloor}.
$$
We obtain
\begin{align*}
\int_x^n\left\lfloor\frac{n}{u}\right\rfloor\log u\,du&=x(1-\log x)\left\lfloor\frac{n}{x}\right\rfloor+(n\log n-n)H_{\left\lfloor\frac{n}{x}\right\rfloor}-nJ_{\left\lfloor\frac{n}{x}\right\rfloor}\\
&=\left(H_{\left\lfloor\frac{n}{x}\right\rfloor}-\frac{x}{n}\left\lfloor\frac{n}{x}\right\rfloor\right)(n\log n-n)-\left(J_{\left\lfloor\frac{n}{x}\right\rfloor}-\frac{x}{n}\left\lfloor\frac{n}{x}\right\rfloor\log\frac{n}{x}\right)n,
\end{align*}
completing the proof.
\end{proof}

%
%

\begin{lem}\label{cor:24}
For all real numbers $n\ge2$, we have
$$
\oC\left(n,\sqrt{n}\right)=\frac{1}{8}n(\log n)^2+\gamma_1 n+O\left(\sqrt{n}\log n\right).
$$
\end{lem}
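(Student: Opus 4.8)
The plan is to estimate $\oC(n,\sqrt n)$ directly from its definition \eqref{eqn:oC-defn} rather than through the closed-form identity of Proposition \ref{prop:23n}(2): the point is that taking $x=\sqrt n$ shortens the defining sum enough that replacing $\lfloor n/b\rfloor$ by $n/b$ is cheap. Concretely, I would write $\lfloor n/b\rfloor = \frac{n}{b}+O(1)$ in \eqref{eqn:oC-defn}, giving
\[
\oC(n,\sqrt n)=\sum_{1\le b\le\sqrt n}\left\lfloor\frac{n}{b}\right\rfloor\log b = n\sum_{1\le b\le\sqrt n}\frac{\log b}{b}+O\bigg(\sum_{1\le b\le\sqrt n}\log b\bigg)= nJ(\sqrt n)+O\big(\log(\lfloor\sqrt n\rfloor!)\big).
\]
By Stirling's estimate \eqref{eqn:Stirling} (or simply $\log(m!)\le m\log m$) the error term here is $O(\sqrt n\log n)$.

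Next I would apply Lemma \ref{lem:27} at the real argument $x=\sqrt n$; since $\log(\sqrt n+1)=O(\log n)$ this yields
\[
J(\sqrt n)=\tfrac12(\log\sqrt n)^2+\gamma_1+O\!\left(\tfrac{\log n}{\sqrt n}\right)=\tfrac18(\log n)^2+\gamma_1+O\!\left(\tfrac{\log n}{\sqrt n}\right).
\]
Multiplying by $n$ and inserting into the previous display gives $\oC(n,\sqrt n)=\tfrac18 n(\log n)^2+\gamma_1 n+O(\sqrt n\log n)$, which is the claim. For the two exceptional values $n\in\{2,3\}$ one has $\lfloor\sqrt n\rfloor=1$ and $\oC(n,\sqrt n)=0$, so the stated formula holds trivially with a suitable implied constant; otherwise $n\ge4$ and $\sqrt n\ge2$, legitimizing \eqref{eqn:Stirling}.

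As an alternative I would note that the result also drops out of Proposition \ref{prop:23n} itself: taking $x=\sqrt n$ one has $\frac{n}{x}=\sqrt n$ and $\lfloor\frac{n}{x}\rfloor=\lfloor\sqrt n\rfloor$, so that combining part (2) and the quadrature \eqref{eqn:oCn-integral} with the estimates $H_{\lfloor\sqrt n\rfloor}=\tfrac12\log n+\gamma+O(n^{-1/2})$ and $J_{\lfloor\sqrt n\rfloor}=\tfrac18(\log n)^2+\gamma_1+O(n^{-1/2}\log n)$ gives $\oC(n,n)-\oC(n,\sqrt n)=\tfrac38 n(\log n)^2+(\gamma-1)n\log n-(\gamma-1)n-\gamma_1 n+O(\sqrt n\log n)$; subtracting this from the formula for $\oC(n,n)$ in part (1), the $n\log n$ and $n$ contributions cancel and the $n(\log n)^2$ coefficient is $\tfrac12-\tfrac38=\tfrac18$, recovering the same answer.

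There is no substantive obstacle: both routes are pure bookkeeping. The only points requiring a little care are that Lemma \ref{lem:27} is valid for all real $x\ge1$ (so it may be used at $x=\sqrt n$ without rounding, unlike the integer-only Lemma \ref{lem:21n}), and that the accumulated errors from $\lfloor n/b\rfloor=\frac nb+O(1)$ summed over $b\le\sqrt n$, and from multiplying $O(n^{-1/2}\log n)$-type remainders by factors of size $n$, are all genuinely of order $\sqrt n\log n$.
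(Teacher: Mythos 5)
Your primary argument is exactly the paper's own proof: write $\lfloor n/b\rfloor = n/b + O(1)$ to get $\oC(n,\sqrt n) = nJ(\sqrt n) + O(\log(\lfloor\sqrt n\rfloor!))$, bound the error by $O(\sqrt n\log n)$, and apply Lemma~\ref{lem:27} at $x=\sqrt n$. The alternative route through Proposition~\ref{prop:23n} is a nice consistency check, but the main line of reasoning is the same.
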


\begin{proof}
By using the estimate $\lfloor t\rfloor=t+O(1)$, we see that
\begin{equation}\label{eqn:oCsqrt}
\oC\left(n,\sqrt{n}\right)=\sum_{1\le b\le\sqrt{n}}\left\lfloor\frac{n}{b}\right\rfloor\log b=nJ\left(\sqrt{n}\right)+O\left(\log\left(\left\lfloor\sqrt{n}\right\rfloor!\right)\right).
\end{equation}
By applying Lemma \ref{lem:27} with $x=\sqrt{n}$, we obtain
$$
J\left(\sqrt{n}\right)=\frac{1}{8}(\log n)^2+\gamma_1+O\left(\frac{\log n}{\sqrt{n}}\right).
$$
Moreover, we have
$$
\log\left(\left\lfloor\sqrt{n}\right\rfloor!\right)\le\left\lfloor\sqrt{n}\right\rfloor\log\left\lfloor\sqrt{n}\right\rfloor\le\frac{1}{2}\sqrt{n}\log n.
$$
Inserting these estimates back into \eqref{eqn:oCsqrt} yields the lemma.
\end{proof}

%
%

\subsection{Estimates: $L_i(n)$}  \label{subsec:22nn4}

For positive integers $i \ge 1$ and $n \ge 2$, we set
\begin{equation} 
L_i(n) := \sum_{b=2}^n  b (\log b)^i.
\end{equation} 
We give formulas for all $i \ge 1$ but will only need
the cases $i=1,2$ in the sequel.

%
%

\begin{lem} \label{lem:Ln01-estimate} 
For all integers $i\ge1$ and $n\ge2$, we have
\begin{equation}\label{eqn:general-1} 
L_i(n)=\int_1^nu(\log u)^i\,du+\theta_i(n)n(\log n)^i,
\end{equation}
where $0\le\theta_i(n)\le1$.
 In particular,
\begin{equation} \label{eq:Ln1-estimate}
L_1(n) = \frac{1}{2} n^2 \log n - \frac{1}{4}n^2 + O\left( n \log n\right),
\end{equation} 
\begin{equation} \label{eq:Ln2-estimate}
L_2(n) = \frac{1}{2} n^2 (\log n)^2 - \frac{1}{2} n^2 \log n  + \frac{1}{4} n^2 + O\left( n (\log n)^2\right).
\end{equation} 
\end{lem}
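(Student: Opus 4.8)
The plan is to prove the identity \eqref{eqn:general-1} by comparing the sum $L_i(n) = \sum_{b=2}^n b(\log b)^i$ with the integral $\int_1^n u(\log u)^i\,du$ via the integral-comparison (Euler--Maclaurin-type) bound for monotone functions. Observe that for $u \ge 1$ the function $\phi(u) := u(\log u)^i$ is nonnegative and increasing (its derivative $(\log u)^{i-1}(\log u + i) \ge 0$ for $u \ge 1$), so for each integer $b \ge 2$ we have the two-sided bound $\int_{b-1}^b \phi(u)\,du \le \phi(b) \le \int_b^{b+1} \phi(u)\,du$. Summing the left inequality over $2 \le b \le n$ gives $L_i(n) \ge \int_1^n \phi(u)\,du$, and summing the right inequality gives $L_i(n) \le \int_2^{n+1}\phi(u)\,du = \int_1^n \phi(u)\,du + \big(\int_n^{n+1} - \int_1^2\big)\phi(u)\,du \le \int_1^n \phi(u)\,du + \phi(n+1)$. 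Since $\phi(n+1) = (n+1)(\log(n+1))^i = n(\log n)^i + O(n^{i-1}(\log n)^{i-1}\cdot\text{lower order})$, absorbing the discrepancy between $\phi(n+1)$ and $\phi(n)$ into an error term shows $L_i(n) = \int_1^n \phi(u)\,du + \theta_i(n) n(\log n)^i$ with $0 \le \theta_i(n) \le 1$ after possibly adjusting constants; a cleaner route is to write $L_i(n) = \int_1^n \phi(u)\,du + \sum_{b=2}^n\big(\phi(b) - \int_{b-1}^b\phi(u)\,du\big)$ and note each summand lies in $[0, \phi(b)-\phi(b-1)]$, so the total sum telescopes to something in $[0, \phi(n) - \phi(1)] = [0, n(\log n)^i]$, giving \eqref{eqn:general-1} directly.

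Next I would evaluate the integral $\int_1^n u(\log u)^i\,du$ in closed form. Integration by parts (or the standard reduction $\int u(\log u)^i\,du = \tfrac12 u^2(\log u)^i - \tfrac{i}{2}\int u(\log u)^{i-1}\,du$) yields, for $i=1$, $\int_1^n u\log u\,du = \tfrac12 n^2\log n - \tfrac14 n^2 + \tfrac14$, and for $i=2$, $\int_1^n u(\log u)^2\,du = \tfrac12 n^2(\log n)^2 - \tfrac12 n^2\log n + \tfrac14 n^2 - \tfrac14$. Substituting these into \eqref{eqn:general-1} and using $0 \le \theta_i(n) n(\log n)^i \le n(\log n)^i = O(n(\log n)^i)$ immediately gives \eqref{eq:Ln1-estimate} and \eqref{eq:Ln2-estimate}, with the error term $O(n\log n)$ in the first case (the $\theta_1 n\log n$ term plus the constant) and $O(n(\log n)^2)$ in the second.

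The only mild subtlety — and the place to be careful — is the bookkeeping in the error term: one must check that the $+\tfrac14$ (resp. $-\tfrac14$) constant from the lower limit of integration, and the difference between $(\log(n+1))^i$ and $(\log n)^i$, are genuinely swallowed by $O(n(\log n)^i)$, which they are since both are $o(n(\log n)^i)$. There is no real obstacle here; the statement is essentially a routine application of monotone integral comparison followed by an elementary antiderivative computation, and the proof should occupy only a few lines.
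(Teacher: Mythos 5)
Your proof is correct and follows essentially the same strategy as the paper: use monotonicity of $u(\log u)^i$ to sandwich $L_i(n)$ between $\int_1^n u(\log u)^i\,du$ and $\int_1^n u(\log u)^i\,du + n(\log n)^i$, then substitute the closed-form antiderivatives for $i=1,2$. Your ``cleaner route'' (writing $L_i(n)-\int_1^n\phi = \sum_{b=2}^n\big(\phi(b)-\int_{b-1}^b\phi\big)$ and telescoping the bound $0\le\phi(b)-\int_{b-1}^b\phi\le\phi(b)-\phi(b-1)$) is exactly the right way to get $\theta_i(n)\in[0,1]$ sharply, and is equivalent to the paper's two displayed inequalities; the first variant you sketched via $\int_2^{n+1}\phi$ would need the small patching you already flagged, so rely on the second.
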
 

\begin{proof}
The function $u(\log u)^i$, $1\le u\le n$ is increasing. We have lower and upper bounds
$$
\int_1^nu(\log u)^i\,du\le\sum_{b=2}^nb(\log b)^i=L_i(n),
$$
$$
\int_1^nu(\log u)^i\,du\ge\sum_{b=1}^{n-1}b(\log b)^i=L_i(n)-n(\log n)^i.
$$
Thus the assertion \eqref{eqn:general-1}  follows.

The assertions \eqref{eq:Ln1-estimate} and \eqref{eq:Ln2-estimate}  follow from the first 
assertion with the formulas
$$
\int_1^nu\log u\,du=\left[\frac{1}{2} u^2 \log u - \frac{1}{4}u^2\right]_{u=1}^n=\frac{1}{2} n^2 \log n - \frac{1}{4}n^2+\frac{1}{4},
$$
$$
\int_1^nu(\log u)^2\,du=\left[\frac{1}{2} u^2 (\log u)^2 - \frac{1}{2} u^2 \log u  + \frac{1}{4} u^2\right]_{u=1}^n=\frac{1}{2} n^2 (\log n)^2 - \frac{1}{2} n^2 \log n  + \frac{1}{4} n^2-\frac{1}{4},
$$
completing the proof.
\end{proof} 


\begin{rem}
It can be shown by induction on $i$ that
\begin{equation}\label{eqn:general-2}
\int_1 ^n u(\log u)^i\,du=n^2\sum_{k=0}^i\frac{(-1)^kk!}{2^{k+1}}\binom{i}{k}(\log n)^{i-k}+\frac{(-1)^{i+1}i!}{2^{i+1}}.
\end{equation}
\end{rem}

%
%

\section{Estimates for $\oB(n)$}\label{sec:oB}

\par
This section and the next obtain a proof of Theorem \ref{thm:oHn} for $\oH_n$,  through  making estimates for $\oB(n)$ and $\oA(n)$ separately.

\par In this section we obtain the estimates for $\oB(n)=\sum_{b=2}^n\frac{n-1}{b-1}d_b(n)\log b$  given in Theorem \ref{thm:oBn}.

%
%

\subsection{Digit sum identity and preliminary reduction}\label{subsec:21}

\par Our estimate for  $\oB(n)$ will be  derived using the observation that  $n$ has exactly $2$ digits in base $b$ when $\sqrt{n}<b\le n$.
 
%
%

\begin{lem}\label{lem:diff1}
Let $j$ and $n$ be positive integers. Denote by $I(j,n)$ the interval $\left(\frac{n}{j+1},\frac{n}{j}\right]\cap\left(\sqrt{n},n\right]$. Then
\begin{enumerate}
\item[\emph{(1)}] $I(j,n)$ is empty unless $j<\sqrt{n}$.
\item[\emph{(2)}] If $b\in I(j,n)$ is an integer, then $d_b(n) = n- j (b-1)$, in consequence,
\begin{equation}\label{eqn:b-digitsum}
\frac{n-1}{b-1} d_b(n) \log b =  (n-1) \left( \frac{n \log b }{b-1}-  j \log b\right).  
\end{equation}
\end{enumerate}
\end{lem}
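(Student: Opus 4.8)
The plan is to reduce both assertions to the elementary fact that an integer $n$ has exactly two digits in base $b$ precisely when $\sqrt{n} < b \le n$, and then to read off those two digits explicitly in terms of $j$. Part (1) I would dispatch immediately: if $b$ lies in $I(j,n)$ then in particular $\sqrt{n} < b \le \frac{n}{j}$, whence $j\sqrt{n} < n$, i.e.\ $j < \sqrt{n}$; equivalently $I(j,n)$ is empty as soon as $j \ge \sqrt{n}$.

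For part (2), fix an integer $b \in I(j,n)$. First I would pin down $\lfloor n/b \rfloor$: the defining inequalities $\frac{n}{j+1} < b \le \frac{n}{j}$ rearrange to $j \le \frac{n}{b} < j+1$, so $\lfloor n/b \rfloor = j$; in particular $0 \le n - jb < b$, and since $j \le \frac{n}{b} < \sqrt{n} < b$ we also get $1 \le j \le b-1$. Next, from $\sqrt{n} < b \le n$ we have $b \le n < b^2$, so the base-$b$ expansion of $n$ has exactly two digits, $n = a_1 b + a_0$ with $a_1$ the (positive) top digit. Comparing with $n = jb + (n - jb)$ and invoking the ranges just established identifies $a_1 = j$ and $a_0 = n - jb$, hence
\[
d_b(n) = a_1 + a_0 = j + (n - jb) = n - j(b-1).
\]
Substituting this into $\frac{n-1}{b-1}\, d_b(n) \log b$ and pulling out the factor $n-1$ yields
\[
\frac{n-1}{b-1}\, d_b(n) \log b = (n-1)\left(\frac{n}{b-1} - j\right)\log b = (n-1)\left(\frac{n \log b}{b-1} - j \log b\right),
\]
which is \eqref{eqn:b-digitsum}.

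The argument is essentially bookkeeping, and the only point requiring genuine care — the nearest thing to an obstacle here — is verifying that the $a_1, a_0$ above really are legitimate base-$b$ digits, i.e.\ $0 \le a_0 \le b-1$ and $1 \le a_1 \le b-1$, so that $n = a_1 b + a_0$ is the actual radix expansion and $d_b(n) = a_1 + a_0$. Both bounds follow at once from $j = \lfloor n/b \rfloor$ together with $b > \sqrt{n} \ge j$; no input from Stirling's formula or the earlier estimates of Section~\ref{sec:2} is needed for this lemma.
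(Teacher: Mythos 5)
Your proof is correct and takes essentially the same approach as the paper: both pin down $\lfloor n/b\rfloor = j$ from the defining inequalities of $I(j,n)$ and then exploit $b > \sqrt{n}$ to reduce to the two-digit case. The only cosmetic difference is that the paper invokes its formula $d_b(n)=n-(b-1)\sum_{i\ge1}\lfloor n/b^i\rfloor$ (with $\lfloor n/b^i\rfloor=0$ for $i\ge2$) whereas you read off the two base-$b$ digits $a_1=j$, $a_0=n-jb$ directly and verify they are legitimate digits; the two computations are equivalent.
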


\begin{proof}
(1) Suppose that $x\in I(j,n)$. Then $\sqrt{n}<x\le\frac{n}{j}$, and hence $j<\sqrt{n}$.

(2) Since $\frac{n}{j+1}<b\le\frac{n}{j}$, it follows that $\left\lfloor\frac{n}{b}\right\rfloor=j$. Since $b>\sqrt{n}$, it follows that $\left\lfloor\frac{n}{b^i}\right\rfloor=0$ for all $i\ge2$. From \eqref{eqn:dbn-formula}, we have
$$
d_b(n)=n-(b-1)\sum_{i=1}^\infty\left\lfloor\frac{n}{b^i}\right\rfloor=n-j(b-1),
$$
and \eqref{eqn:b-digitsum} follows by multiplying by $\frac{n-1}{b-1}\log b$. This completes the proof.
\end{proof}

We split the sum $B(n)$ into three parts, 
the third part  being a  cutoff  term removing all  $2 \le b \le \sqrt{n}$,
and the first two parts  using the digit sum identity \eqref{eqn:b-digitsum}.
applied to the range $\sqrt{n} < b \le n$.

%
%

\begin{lem}\label{prop:32n}
\emph{(1)} For all integers $n \ge 2$, we have
\begin{equation}\label{eqn:identity} 
\oB(n) = \oB_1(n) - \oB_2(n) + \oB_R(n), 
\end{equation} 
in which $\oB_1(n)$, $\oB_2(n)$, and $\oB_R(n)$ are defined by
\begin{equation}
\label{eqn:oB1}
\oB_{1}(n) :=n(n-1)  \sum_{\sqrt{n} < b \leq n} \frac{\log b }{b-1}, 
\end{equation}
\begin{equation}\label{eqn:oB2}
\quad\quad\quad \,\,\,  \oB_{2}(n) := (n-1)\sum_{j=1}^{\lfloor  \sqrt{n} \,\rfloor } \, j\Bigg(\sideset{}{'}\sum_{\frac{n}{j+1}<b \leq \frac{n}{j}}  \log b\Bigg),
\end{equation}
where  the prime in the inner sum in \eqref{eqn:oB2} means only $b>\sqrt{n}$ are included, and
 \begin{equation}\label{eq:BR}
\quad \oB_R(n):=\sum_{2\le b\le\sqrt{n}}\frac{n-1}{b-1}d_b(n)\log b.
\end{equation}

\emph{(2)} For all integers $n \ge 2$, the remainder term $\oB_R(n)$ satisfies 
 \begin{equation}\label{eqn:B-remainder} 
0 \le \oB_{R}(n) \le \frac{3}{2}\,n^{3/2}\log n.
\end{equation}
\end{lem}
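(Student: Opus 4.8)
The plan is to prove the two parts of Lemma \ref{prop:32n} in sequence, with part (2) being the genuinely substantive piece.

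For part (1), I would simply split the defining sum $\oB(n) = \sum_{2 \le b \le n} \frac{n-1}{b-1} d_b(n) \log b$ at the threshold $b = \sqrt{n}$. The terms with $2 \le b \le \sqrt{n}$ are collected verbatim into $\oB_R(n)$, giving \eqref{eq:BR}. For the range $\sqrt{n} < b \le n$, every such $b$ lies in exactly one interval $I(j,n) = \left(\frac{n}{j+1}, \frac{n}{j}\right]$ with $1 \le j < \sqrt{n}$ (note $j=\lfloor n/b\rfloor$ ranges over $1, \dots, \lfloor\sqrt n\rfloor$, with the top index contributing only the primed portion above $\sqrt n$), so by Lemma \ref{lem:diff1}(2) the contribution of $b$ equals $(n-1)\left(\frac{n\log b}{b-1} - j\log b\right)$. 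Summing the first piece over all $\sqrt n < b \le n$ yields $\oB_1(n)$ as in \eqref{eqn:oB1}; summing the second piece, grouping by the value $j$, yields $-\oB_2(n)$ with $\oB_2(n)$ as in \eqref{eqn:oB2}, where the primed inner sum records the restriction $b > \sqrt{n}$. This gives \eqref{eqn:identity}.

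For part (2), the lower bound $\oB_R(n) \ge 0$ is immediate since every factor $\frac{n-1}{b-1}$, $d_b(n)$, $\log b$ is nonnegative (using $d_b(n) \ge 1$ from Lemma \ref{lem:dbn-Sbn-bound}). For the upper bound I would apply the inequality $d_b(n) \le \frac{(b-1)\log(n+1)}{\log b}$ from Lemma \ref{lem:dbn-Sbn-bound}, which makes the $\log b$ in numerator and denominator cancel:
\begin{equation*}
\oB_R(n) \le \sum_{2 \le b \le \sqrt{n}} \frac{n-1}{b-1} \cdot \frac{(b-1)\log(n+1)}{\log b} \cdot \log b = (n-1)\log(n+1) \sum_{2 \le b \le \sqrt{n}} 1 \le (n-1)\log(n+1) \cdot \sqrt{n}.
\end{equation*}
Then I would bound $(n-1)\log(n+1)\sqrt{n} \le n^{3/2}\log(n+1)$ and compare $\log(n+1)$ to $\frac{3}{2}\log n$; since $\log(n+1) \le \frac{3}{2}\log n$ fails for very small $n$, I would either check $n = 2$ directly (where $\oB_R(2) = 0$ since the sum is empty, $\sqrt 2 < 2$) and $n=3,4$ by hand, or simply note $\log(n+1) \le \frac 32 \log n$ holds for $n \ge 3$ and handle $n=2$ trivially, obtaining $\oB_R(n) \le \frac{3}{2} n^{3/2}\log n$ for all $n \ge 2$.

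The main obstacle is essentially bookkeeping rather than analysis: in part (1) one must be careful that the partition of $\sqrt n < b \le n$ into the intervals $I(j,n)$ exactly matches the primed-sum conventions in \eqref{eqn:oB2}, and that the index $j$ runs up to $\lfloor\sqrt n\rfloor$ with the boundary interval correctly truncated. In part (2) the only subtlety is the constant: the crude count $\#\{2 \le b \le \sqrt n\} \le \sqrt n - 1$ together with the $\log(n+1)$ versus $\log n$ discrepancy forces either a small-$n$ verification or a slightly more careful constant-chasing to land exactly on the stated $\frac{3}{2}$; neither is hard.
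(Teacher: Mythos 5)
Your proof is correct and follows essentially the same route as the paper: split $\oB(n)$ at $b=\sqrt{n}$, collect the lower range into $\oB_R(n)$, apply Lemma \ref{lem:diff1}(2) termwise on $(\sqrt n,n]$ and regroup by $j=\lfloor n/b\rfloor$ to get $\oB_1(n)-\oB_2(n)$, and bound $\oB_R(n)$ by inserting the inequality $d_b(n)\le\frac{(b-1)\log(n+1)}{\log b}$ from Lemma \ref{lem:dbn-Sbn-bound}. The paper's one-line chain $(\lfloor\sqrt n\rfloor-1)(n-1)\log(n+1)\le\sqrt n\cdot n\cdot\frac32\log n$ is exactly what you arrive at; your explicit observation that $\log(n+1)\le\frac32\log n$ only holds for $n\ge3$ and that the $n=2$ (and $n=3$) cases are vacuous because the sum is empty is a nice bit of due diligence that the paper leaves implicit.
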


\begin{proof}
(1) Recall that  $\oB(n) = \sum_{b=2}^n\frac{n-1}{b-1} d_b(n) \log b.$
The remainder term $\oB_R(n)$ first cuts off the terms with $2 \le b \le \sqrt{n}$ in the sum.
The other two terms  $\oB_1(n)$ and $\oB_2(n)$ are obtained by  applying 
the decomposition \eqref{eqn:b-digitsum} of Lemma \ref{lem:diff1} to each index $b\in\left(\sqrt{n},n\right]$ term by term.  

(2) From \eqref{eqn:dbnineq}, it follows that $0\le\frac{n-1}{b-1}d_b(n)\log b\le(n-1)\log(n+1)$. Summing from $b=2$ to $\left\lfloor\sqrt{n}\right\rfloor$, we obtain
$$
0\le \oB_R(n)\le\left(\left\lfloor\sqrt{n}\right\rfloor-1\right)(n-1)\log(n+1)\le\left(\sqrt{n}\right)(n)\left(\frac{3}{2}\log n\right)=\frac{3}{2}n^{3/2}\log n
$$
as desired.
\end{proof}  

The sums $\oB_{1}(n)$ and $\oB_{2}(n)$ are  of comparable sizes, on the order of  $n^2\log n$.
We estimate them separately.

%
%

\subsection{Estimate for $\oB_{1}(n)$}
\label{subsec:B21asympt}

%
%

\begin{lem}\label{lem:oB1}
Let $\oB_{1}(n)= n(n-1)  \sum_{\sqrt{n} < b \leq n} \frac{\log b }{b-1}$.
Then for all integers $n\ge2$, we have
\begin{equation}
\label{eqn:Uncond-oB1-asymp}
\oB_{1}(n)=\frac{3}{8}n^2(\log n)^2+O\left(n^{3/2}\log n\right).
\end{equation}
\end{lem}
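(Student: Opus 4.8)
The plan is to evaluate the weighted sum $\sum_{\sqrt{n} < b \le n} \frac{\log b}{b-1}$ by comparing it with the cleaner sum $\sum_{\sqrt{n} < b \le n} \frac{\log b}{b}$, controlling the error from replacing $\frac{1}{b-1}$ by $\frac{1}{b}$, and then expressing the latter sum as a difference $J(n) - J(\sqrt{n})$ of the step functions studied in Lemma \ref{lem:27}. Since $\oB_1(n) = n(n-1)\sum_{\sqrt{n}<b\le n}\frac{\log b}{b-1}$ and the prefactor $n(n-1) = n^2 + O(n)$, any estimate of the inner sum of the form $C + o(1)$ will be amplified to $C n^2 + O(n^2 \cdot (\text{error}))$, so I need the inner sum to precision $o(\log n /n)$ or so — actually only to precision $O((\log n)^2/\sqrt n)$ to land the claimed $O(n^{3/2}\log n)$ remainder, which is comfortable.

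First I would write $\frac{1}{b-1} = \frac{1}{b} + \frac{1}{b(b-1)}$, so that
\begin{equation*}
\sum_{\sqrt{n} < b \le n} \frac{\log b}{b-1} = \sum_{\sqrt{n} < b \le n} \frac{\log b}{b} + \sum_{\sqrt{n} < b \le n} \frac{\log b}{b(b-1)}.
\end{equation*}
The second sum is $O\!\left(\sum_{b > \sqrt n} \frac{\log b}{b^2}\right) = O\!\left(\frac{\log n}{\sqrt n}\right)$, hence after multiplying by $n(n-1)$ contributes $O(n^{3/2}\log n)$, which is absorbed into the stated remainder. For the main sum I would use $J(x) = \frac12(\log x)^2 + \gamma_1 + O\!\left(\frac{\log(x+1)}{x}\right)$ from Lemma \ref{lem:27}, giving
\begin{equation*}
\sum_{\sqrt{n} < b \le n} \frac{\log b}{b} = J(n) - J(\sqrt{n}) = \frac12(\log n)^2 - \frac12\left(\tfrac12\log n\right)^2 + O\!\left(\frac{\log n}{\sqrt n}\right) = \frac{3}{8}(\log n)^2 + O\!\left(\frac{\log n}{\sqrt n}\right),
\end{equation*}
using $(\log\sqrt n)^2 = \frac14(\log n)^2$; note the Stieltjes constants cancel. (One should be slightly careful that $J$ is evaluated at the real arguments $n$ and $\sqrt n$, but Lemma \ref{lem:27} is stated for all real $x \ge 1$, so this is fine; the value of $J$ at $\sqrt n$ versus $\lfloor\sqrt n\rfloor$ differs by at most one term $O(\log n/\sqrt n)$ anyway.)

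Finally, multiplying through by $n(n-1) = n^2 - n$ yields
\begin{equation*}
\oB_1(n) = (n^2 - n)\left(\frac{3}{8}(\log n)^2 + O\!\left(\frac{\log n}{\sqrt n}\right)\right) = \frac38 n^2(\log n)^2 + O\!\left(n(\log n)^2\right) + O\!\left(n^{3/2}\log n\right),
\end{equation*}
and since $n(\log n)^2 = O(n^{3/2}\log n)$ this is exactly \eqref{eqn:Uncond-oB1-asymp}. I do not anticipate a real obstacle here: the only thing to watch is bookkeeping of which error terms dominate (the $\frac{1}{b(b-1)}$ correction and the tail of $J$ both give $O(n^{3/2}\log n)$ after the $n^2$ amplification, and these are the bottleneck), and making sure the $\gamma_1$ terms genuinely cancel in the difference $J(n) - J(\sqrt n)$ rather than the main quadratic terms — which they do, leaving the clean coefficient $1 - \frac14 = \frac34$ scaled by $\frac12$, i.e. $\frac38$.
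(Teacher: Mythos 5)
Your proof is correct and follows essentially the same route as the paper's: split $\frac{1}{b-1}=\frac{1}{b}+\frac{1}{b(b-1)}$, bound the correction tail, evaluate $J(n)-J(\sqrt{n})$ via Lemma \ref{lem:27}, and multiply by $n(n-1)$. The error-term bookkeeping and the observation that the $\gamma_1$ constants cancel match the paper's argument exactly.
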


\begin{proof}
We rewrite the sum $\frac{\oB_{1}(n)}{n(n-1)}$ as
\begin{equation}\label{eqn:oB1n-estimate-start}
\frac{\oB_{1}(n)}{n(n-1)}=\sum_{\sqrt{n}<b\le n}\frac{\log b}{b-1}=\sum_{\sqrt{n}<b\le n}\frac{\log b}{b}+\sum_{\sqrt{n}<b\le n}\frac{\log b}{b(b-1)}.
\end{equation}
The contribution from the last sum in \eqref{eqn:oB1n-estimate-start} is negligible:
\begin{equation}\label{eqn:oB1n-estimate-1}
0\le\sum_{\sqrt{n}<b\le n}\frac{\log b}{b(b-1)}\le(\log n)\sum_{b>\sqrt{n}}\frac{1}{b(b-1)}=\frac{\log n}{\left\lfloor\sqrt{n}\right\rfloor}\le\frac{2\log n}{\sqrt{n}}.
\end{equation}
We use Lemma \ref{lem:27} to estimate the first sum on the right in \eqref{eqn:oB1n-estimate-start} and obtain
\begin{align}
\sum_{\sqrt{n}<b\le n}\frac{\log b}{b}&=J(n)-J\left(\sqrt{n}\right)\nonumber\\
&=\frac{1}{2}(\log n)^2-\frac{1}{2}\left(\log\sqrt{n}\right)^2+O\left(\frac{\log n}{\sqrt{n}}\right)\nonumber\\
&=\frac{3}{8}(\log n)^2+O\left(\frac{\log n}{\sqrt{n}}\right).\label{eqn:oB1n-estimate-2}
\end{align}
On inserting \eqref{eqn:oB1n-estimate-1} and \eqref{eqn:oB1n-estimate-2} into \eqref{eqn:oB1n-estimate-start}, we obtain
$$
\frac{\oB_{1}(n)}{n(n-1)}=\frac{3}{8}(\log n)^2+O\left(\frac{\log n}{\sqrt{n}}\right).
$$
On multiplying by $n(n-1)$, we obtain \eqref{eqn:Uncond-oB1-asymp} as desired.
\end{proof}

%
%

\subsection{Estimate for $\oB_{2}(n)$}\label{subsec:B12asympt}

%
%

\begin{lem} \label{lem:oB2}
Let 
$$\oB_{2}(n) :=  (n-1)\sum_{j=1}^{\lfloor\sqrt{n}\rfloor }j\Bigg(\sideset{}{'}\sum_{\frac{n}{j+1}<b\leq \frac{n}{j}}  \log b \Bigg),$$
where the prime in the inner sum means only $b>\sqrt{n}$ are included. Then for all integers $n \ge 2$, 
 \begin{equation} \label{eqn:oB2-asymp}
 \oB_{2}(n) =\frac{3}{8} n^{2}(\log n)^2 +(\gamma-1)n^{2}\log n +\left(1- \gamma -\gamma_1\right)n^2 +O\left(n^{3/2} \log n \right),
\end{equation}
where $\gamma$ is Euler's constant and $\gamma_1$ is the first Stieltjes constant.
\end{lem}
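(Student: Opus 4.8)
The plan is to evaluate $\oB_2(n)$ by interchanging the order of summation and recognizing the inner sum as a difference of values of the function $\oC(n,x) = \sum_{1\le b\le x}\lfloor n/b\rfloor\log b$ studied in Proposition \ref{prop:23n}. Concretely, writing $\oB_2(n)/(n-1) = \sum_{j=1}^{\lfloor\sqrt n\rfloor} j\big(\sum'_{n/(j+1)<b\le n/j}\log b\big)$, I would use Abel summation (partial summation in $j$): since $j$ is the coefficient of $\log b$ exactly when $b\in(n/(j+1),n/j]$, i.e.\ $\lfloor n/b\rfloor = j$, the weighted sum $\sum_j j\sum'_{\ldots}\log b$ collapses. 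The cleanest route: write $j\cdot[\text{sum over that block}] = \sum_{i=1}^{j}[\text{sum over that block}]$ and swap, so that $\oB_2(n)/(n-1) = \sum_{i=1}^{\lfloor\sqrt n\rfloor}\big(\sum'_{\sqrt n < b\le n/i}\log b\big)$, where the prime restricts to $b>\sqrt n$. Each inner sum is $\sum_{1\le b\le n/i}\log b - \sum_{1\le b\le\sqrt n}\log b = \log(\lfloor n/i\rfloor!) - \log(\lfloor\sqrt n\rfloor!)$. Alternatively, and this is what I expect to be slickest, observe directly that $\sum_{j\ge 1} j\sum'_{n/(j+1)<b\le n/j}\log b = \sum_{\sqrt n < b\le n}\lfloor n/b\rfloor\log b = \oC(n,n) - \oC(n,\sqrt n)$.

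With that identity in hand, the estimate is immediate from the tools already proved: Proposition \ref{prop:23n}(1) gives $\oC(n,n) = \tfrac12 n(\log n)^2 + (\gamma-1)n\log n + (1-\gamma)n + O(\sqrt n\log n)$, and Lemma \ref{cor:24} gives $\oC(n,\sqrt n) = \tfrac18 n(\log n)^2 + \gamma_1 n + O(\sqrt n\log n)$. Subtracting,
\[
\sum_{\sqrt n < b\le n}\left\lfloor\frac{n}{b}\right\rfloor\log b = \frac{3}{8}n(\log n)^2 + (\gamma-1)n\log n + (1-\gamma-\gamma_1)n + O\!\left(\sqrt n\log n\right).
\]
Then $\oB_2(n) = (n-1)$ times this, and since $(n-1) = n + O(1)$ while the bracket is $O(n(\log n)^2)$, the error incurred by replacing $n-1$ by $n$ is $O(n(\log n)^2)$, which is absorbed into the claimed $O(n^{3/2}\log n)$; multiplying the main terms by $n$ yields exactly \eqref{eqn:oB2-asymp}. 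I would need to double-check that the restriction to $j\le\lfloor\sqrt n\rfloor$ in the definition of $\oB_2(n)$ is harmless: by Lemma \ref{lem:diff1}(1), $I(j,n)$ is empty for $j\ge\sqrt n$, so the prime-restricted inner sum vanishes for those $j$ anyway, and extending the outer sum to all $j\ge 1$ changes nothing — this makes the collapse to $\oC(n,n)-\oC(n,\sqrt n)$ exact.

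The one point demanding a little care — and the closest thing to an obstacle — is bookkeeping at the boundary $b = \lfloor\sqrt n\rfloor$ or $b=\sqrt n$: the prime in the inner sum means $b>\sqrt n$ strictly, and one must ensure the telescoping/reindexing matches this convention, so that the sum really is $\sum_{b:\,\sqrt n < b\le n}\lfloor n/b\rfloor\log b$ and not off by the $b\le\sqrt n$ terms. Since $\oC(n,\sqrt n) = \sum_{1\le b\le\sqrt n}\lfloor n/b\rfloor\log b$ already includes $b$ up to $\lfloor\sqrt n\rfloor$, the difference $\oC(n,n)-\oC(n,\sqrt n)$ is precisely the sum over $\lfloor\sqrt n\rfloor < b\le n$, i.e.\ over $b>\sqrt n$, matching the prime convention exactly; any discrepancy from whether $\sqrt n$ is itself an integer contributes a single term of size $O(\sqrt n\log n)$ and is negligible. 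After confirming this, the proof is essentially a two-line citation of Proposition \ref{prop:23n}(1) and Lemma \ref{cor:24}.
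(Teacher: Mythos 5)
Your proposal is correct and follows essentially the same route as the paper: recognize that $j=\lfloor n/b\rfloor$ on each block so the double sum collapses to $\sum_{\sqrt n<b\le n}\lfloor n/b\rfloor\log b=\oC(n,n)-\oC(n,\sqrt n)$, then invoke Proposition \ref{prop:23n}(1) and Lemma \ref{cor:24} and multiply by $n-1$. The preliminary musings about Abel summation are unnecessary detours, but the final argument matches the paper's proof.
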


\begin{proof}
We have
$$
\frac{\oB_2(n)}{n-1}=\sum_{j=1}^{\lfloor\sqrt{n}\rfloor }\bigg( \sideset{}{'}\sum_{\frac{n}{j+1}<b\leq \frac{n}{j}}  \left\lfloor\frac{n}{b}\right\rfloor\log b\bigg) =\sum_{\sqrt{n}<b\le n} \left\lfloor\frac{n}{b}\right\rfloor\log b=\oC(n,n)-\oC\left(n,\sqrt{n}\right).
$$
Applying Proposition \ref{prop:23n} and Lemma \ref{cor:24} to estimate $\oC(n,n)$ and $\oC\left(n,\sqrt{n}\right)$, we obtain
\begin{align*}
\frac{\oB_2(n)}{n-1}&=\left(\frac{1}{2}n(\log n)^2+(\gamma-1)n\log n+(1-\gamma)n\right)-\left(\frac{1}{8}n(\log n)^2+\gamma_1n\right)+O\left(\sqrt{n}\log n\right)\\
&=\frac{3}{8}n(\log n)^2+(\gamma-1)n\log n+\left(1-\gamma-\gamma_1\right)n+O\left(\sqrt{n}\log n\right).
\end{align*}
On multiplying by $(n-1)$, we obtain \eqref{eqn:oB2-asymp} as desired.
\end{proof}

%
%

 \subsection{Proof of Theorem \ref{thm:oBn}}
 
 We combine the results on $\oB_1(n)$ and $\oB_2(n)$ to estimate $\oB(n)$.

\begin{proof}[Proof of Theorem \ref{thm:oBn}]
We estimate $\oB(n)$. We start from  the Lemma \ref{prop:32n} decomposition
$\oB(n)= \oB_{1}(n)- \oB_{2}(n)+\oB_R(n).$
By Lemma \ref{prop:32n} (2) we have $\oB_R(n) = O(n^{3/2}\log n)$,
which is absorbed in   the remainder term estimate  in the theorem statement.
Substituting  the formulas of Lemma \ref{lem:oB1} and Lemma \ref{lem:oB2}, we obtain
\begin{align*}
\oB(n) &= \oB_{1}(n)-\oB_{2}(n)+ \oB_{R}(n) \\
&= \frac{3}{8}n^{2}(\log n)^2 +O\left(n^{3/2} \log n\right) \\
& \quad\quad  -\left( \frac{3}{8}n^2(\log n)^2 +(\gamma-1)n^{2}\log n + (1-\gamma -\gamma_1) n^2 +O\left(n^{3/2} \log n \right)\right)  \\
&=(1-\gamma)n^{2}\log n +(\gamma+\gamma_1 -1) n^2 + O\left( n^{3/2} \log n \right),
\end{align*}
as asserted.
\end{proof}

%
%
%
%

 \section{Estimates for $\oA(n)$ and $\oH_n$}\label{sec:AH}

In this section we  derive    asymptotics for $\oA(n) = \sum_{b=2}^n \frac{2}{b-1}S_{b}(n)\log b$
given in Theorem \ref{thm:oAn} and deduce the estimate for $\log \oH_n$ given in Theorem \ref{thm:oHn}.
In the case of binomial products $\G_n$ treated in \cite{DL:22}  an asymptotic for $A(n)$ was obtained from the
relation $\log \G_n= A(n) - B(n)$, an estimate of $B(n)$,  and the existence of  a good  estimate for $\log \G_n$ coming from
its expression as a product of factorials.  Here 
we  estimate $\oA(n)$ directly. The proof details do have some parallel  with those for $\oB(n)$.

%
%

\subsection{Preliminary reduction}\label{subsec:31}

Recall that $\oA(n) = \sum_{b=2}^n\frac{2}{b-1} S_b(n) \log b$.

%
%

\begin{lem}\label{lem:31a}
For all integers $n\ge2$, we have
\begin{equation}\label{eqn:oAn-simplified} 
\oA(n) = \oA_1(n)+ O \left( n (\log n)^2 \right),
\end{equation} 
where
\begin{equation}\label{eqn:oA1n} 
\oA_1(n) := \sum_{b=2}^n \sum_{j=2}^{n}\frac{2\log b}{b}d_b(j). 
\end{equation} 
\end{lem}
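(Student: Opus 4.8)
The plan is to obtain $\oA_1(n)$ from $\oA(n) = \sum_{b=2}^{n}\frac{2\log b}{b-1}\,S_b(n)$ by two purely cosmetic modifications, bounding the error introduced by each with the digit-sum inequalities of Lemma \ref{lem:dbn-Sbn-bound}. First I would rewrite the running digit sum as $S_b(n) = \sum_{j=1}^{n-1} d_b(j) = \sum_{j=2}^{n} d_b(j) + d_b(1) - d_b(n) = \sum_{j=2}^{n} d_b(j) + 1 - d_b(n)$, using $d_b(1) = 1$. This splits $\oA(n)$ into a main piece $\sum_{b=2}^{n}\frac{2\log b}{b-1}\sum_{j=2}^{n} d_b(j)$ and a boundary piece $\sum_{b=2}^{n}\frac{2\log b}{b-1}\bigl(1 - d_b(n)\bigr)$. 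For the boundary piece, the inequality $1 \le d_b(n) \le \frac{(b-1)\log(n+1)}{\log b}$ from \eqref{eqn:dbnineq} gives $\bigl|\frac{2\log b}{b-1}\bigl(1 - d_b(n)\bigr)\bigr| \le 2\log(n+1)$ for each $b$, so summing over $2 \le b \le n$ this piece is $O(n\log n)$.

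Next I would replace the weight $\frac{1}{b-1}$ by $\frac{1}{b}$ using $\frac{1}{b-1} = \frac{1}{b} + \frac{1}{b(b-1)}$. The $\frac{1}{b}$-part produces exactly $\oA_1(n) = \sum_{b=2}^{n}\sum_{j=2}^{n}\frac{2\log b}{b}\,d_b(j)$, and the remaining contribution is $\sum_{b=2}^{n}\frac{2\log b}{b(b-1)}\sum_{j=2}^{n} d_b(j)$. Here I would use $\sum_{j=2}^{n} d_b(j) \le \sum_{j=1}^{n} d_b(j) = S_b(n+1) \le \frac{(b-1)(n+1)\log(n+1)}{2\log b}$, which is \eqref{eqn:Sbnineq} with $n$ replaced by $n+1$; the factors $(b-1)$ and $\log b$ cancel against the weight, leaving at most $\frac{(n+1)\log(n+1)}{b}$ per term, hence a total of at most $(n+1)\log(n+1)\,H_n = O\bigl(n(\log n)^2\bigr)$.

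Adding the two error estimates gives $\oA(n) = \oA_1(n) + O\bigl(n(\log n)^2\bigr) + O(n\log n) = \oA_1(n) + O\bigl(n(\log n)^2\bigr)$, as claimed. The argument is entirely elementary, so there is no serious obstacle; the only point that needs attention is that the dominant $n(\log n)^2$ term comes precisely from the weight change $\frac{1}{b-1}\to\frac{1}{b}$, and to keep the error this small one must exploit the cancellation of $\log b$ between the weight $\frac{2\log b}{b(b-1)}$ and the Drazin--Griffith bound on $S_b(n+1)$ from \eqref{eqn:Sbnineq}. A cruder estimate of $\sum_{j=2}^{n} d_b(j)$ — for instance by $(n+1)\,d_b(n)$ together with \eqref{eqn:dbnineq} — would yield a weaker remainder than the one stated.
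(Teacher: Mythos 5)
Your proof is correct and follows essentially the same route as the paper's: both use the weight split $\frac{1}{b-1}=\frac{1}{b}+\frac{1}{b(b-1)}$ together with the index shift $\sum_{j=1}^{n-1}d_b(j)=\sum_{j=2}^{n}d_b(j)-(d_b(n)-1)$, bounding each discrepancy via Lemma~\ref{lem:dbn-Sbn-bound}. The only cosmetic difference is the order of the two steps (the paper does the weight split first, you do the index shift first), and you correctly identify that the Drazin--Griffith bound on $S_b$ is what makes the $\frac{1}{b(b-1)}$-tail contribute only $O(n(\log n)^2)$.
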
 

\begin{proof}
We rewrite the sum \eqref{eqn:oA-function-0}
that defines $\oA(n)$ using the identity
\begin{equation}\label{eqn:identity1}  
\frac{1}{b-1} = \frac{1}{b} + \frac{1}{b(b-1)}
\end{equation}
and obtain
\begin{equation} \label{eqn:oAn-start} 
\oA(n) = \sum_{b=2}^n \frac{2\log b}{b} S_b(n) + \sum_{b=2}^n \frac{2 \log b}{b(b-1)} S_b(n).
\end{equation}
Since $S_b(n) =\sum_{j=1}^{n-1} d_b(j)$, the first sum on the right in \eqref{eqn:oAn-start} is
\begin{equation}\label{eqn:oAn&oA1n}
\sum_{b=2}^n \frac{2\log b }{b} S_b(n) = \sum_{b=2}^n \sum_{j=1}^{n-1}\frac{2\log b}{b} d_b(j) = \oA_1(n)-\sum_{b=2}^n \frac{2\log b}{b}\left(d_b(n)-1\right).
\end{equation}
By Lemma \ref{lem:dbn-Sbn-bound},
$$
0\le d_b(n)-1\le\frac{(b-1)\log(n+1)}{\log b}-1<\frac{b\log(n+1)}{\log b}.
$$
So the last sum in \eqref{eqn:oAn&oA1n} satisfies, for all $n \ge 2,$
$$
0\le\sum_{b=2}^n \frac{2\log b}{b}\left(d_b(n)-1\right)<\sum_{b=2}^n2\log(n+1) \le 2n\log n.
$$
Hence
\begin{equation}\label{eqn:sumSestimate1}
\sum_{b=2}^n \frac{2\log b}{b}S_b(n)=\oA_1(n)+O(n\log n).
\end{equation}
Now, we treat the last sum in \eqref{eqn:oAn-start}. We apply Lemma \ref{lem:dbn-Sbn-bound}  to bound $S_b(n)$, obtaining
\begin{equation}\label{eqn:sumSestimate2}
\sum_{b=2}^n \frac{2 \log b}{b(b-1)} S_b(n)\le\sum_{b=2}^n\frac{n\log n}{b}\ll n(\log n)^2.
\end{equation}
On inserting \eqref{eqn:sumSestimate1} and \eqref{eqn:sumSestimate2} into \eqref{eqn:oAn-start}, we obtain \eqref{eqn:oAn-simplified} as desired.
\end{proof}

%
%

\subsection{Estimate for $\oA_{1}(n)$ reduction}\label{subsec:32}
\label{sec:oA1asympt}

%
%

\begin{lem} \label{lem:32a}
\emph{(1)} For all integers $n \ge 2$, the sum $\oA_1(n)$ given by \eqref{eqn:oA1n} can be rewritten as
\begin{equation}\label{eqn:oAn-simplified2} 
\oA_1(n) = \oA_{11}(n) + \oA_{12}(n) - \oA_{R}(n),
\end{equation}
where
\begin{equation}\label{eqn:oA11n} 
\oA_{11}(n) := \sum_{j=2}^n \frac{2}{j-1} \oB(j),
\end{equation}
\begin{equation}\label{eqn:oA12n} 
\quad\quad \oA_{12} (n) := \sum_{j=2}^n \sum_{b=j+1}^n \frac{2j\log b}{b},
\end{equation}
\begin{equation}\label{eqn:oARn} 
\quad\quad\quad \oA_{R} (n) :=  \sum_{j=2}^{n} \sum_{b=2}^j \frac{2\log b}{b(b-1)} d_b(j),
\end{equation}
and $\oB(n)$ is given by \eqref{eqn:oB-function-0}.\\

\emph{(2)} For all integers $n \ge 2$, we have
\begin{equation} 
\oA_R(n) \le 3 n (\log n)^2.
\end{equation}
\end{lem}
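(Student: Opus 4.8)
The plan is to treat the two parts separately: part (1) is a rearrangement of sums, and part (2) is a crude majorization.

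For part (1), I would start from $\oA_1(n)=\sum_{b=2}^n\sum_{j=2}^n\frac{2\log b}{b}d_b(j)$, interchange the order of summation so that $j$ is the outer index, and split the inner sum over $b$ at $b=j$. For $b>j$ the integer $j$ has a single base-$b$ digit, so $d_b(j)=j$, and hence $\sum_{j=2}^n\sum_{b=j+1}^n\frac{2\log b}{b}d_b(j)=\sum_{j=2}^n\sum_{b=j+1}^n\frac{2j\log b}{b}=\oA_{12}(n)$. For $2\le b\le j$, I would use the identity $\frac{1}{b-1}=\frac{1}{b}+\frac{1}{b(b-1)}$ inside the definition $\oB(j)=\sum_{b=2}^j\frac{j-1}{b-1}d_b(j)\log b$ to obtain $\sum_{b=2}^j\frac{2\log b}{b}d_b(j)=\frac{2}{j-1}\oB(j)-\sum_{b=2}^j\frac{2\log b}{b(b-1)}d_b(j)$; summing this over $2\le j\le n$ turns the first piece into $\oA_{11}(n)$ and the second into $\oA_R(n)$. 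Adding the two ranges gives $\oA_1(n)=\oA_{11}(n)+\oA_{12}(n)-\oA_R(n)$.

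For part (2), I would bound $\oA_R(n)$ by interchanging the order of summation, writing $\oA_R(n)=\sum_{b=2}^n\frac{2\log b}{b(b-1)}\sum_{j=b}^n d_b(j)$, and then bounding the inner sum by $\sum_{j=b}^n d_b(j)\le\sum_{j=1}^n d_b(j)=S_b(n+1)$. Applying the upper bound $S_b(n+1)\le\frac{(b-1)(n+1)\log(n+1)}{2\log b}$ from Lemma \ref{lem:dbn-Sbn-bound}, the factors $\log b$ and $(b-1)$ cancel, leaving $\oA_R(n)\le(n+1)\log(n+1)\sum_{b=2}^n\frac{1}{b}=(n+1)\log(n+1)(H_n-1)$. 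Using $H_n\le 1+\log n$, this is at most $(n+1)\log(n+1)\log n$, and since for $n\ge2$ one has $\frac{n+1}{n}\le\frac{3}{2}$ and $\log(n+1)\le 2\log n$ (equivalently $n+1\le n^2$), it follows that $(n+1)\log(n+1)\le 3n\log n$, hence $\oA_R(n)\le 3n(\log n)^2$.

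I do not expect a genuine obstacle here. Part (1) is bookkeeping, the only delicate points being that $\oB(j)$ is defined only for $j\ge2$ (so both outer sums must start at $j=2$) and that $d_b(j)=j$ holds precisely when $b>j$. In part (2) the only thing to get right is to choose the majorizations loosely enough to be transparent but tightly enough to land the stated constant $3$; the pair of estimates $S_b(n+1)\le\frac{(b-1)(n+1)\log(n+1)}{2\log b}$ and $H_n-1\le\log n$ accomplishes this with a small margin to spare.
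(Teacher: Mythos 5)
Your argument matches the paper's proof in both parts: the interchange–split–identity rearrangement in (1) and the use of $S_b(n+1)\le\frac{(b-1)(n+1)\log(n+1)}{2\log b}$ followed by a harmonic-sum bound in (2) are exactly the steps taken there. The only cosmetic difference is that in (2) the paper first enlarges the range $b\le j$ to $b\le n$ before summing over $j$ to produce $S_b(n+1)-1$, whereas you interchange first and then relax $j\ge b$ to $j\ge1$; these are the same estimate.
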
 

\begin{proof}
(1) 
We start from \eqref{eqn:oA1n} and interchange the order of summation, obtaining
\begin{eqnarray}\label{eq:410} 
\oA_1(n) &=& \sum_{b=2}^n \sum_{j=2}^{n} \frac{2\log b}{b}  d_b(j) \nonumber \\
& =  &  \sum_{j=2}^{n} \sum_{b=2}^n  \frac{2\log b}{b}d_b(j)  \nonumber\\
&=& \sum_{j=2}^{n} \sum_{b=2}^j \frac{2\log b}{b}d_b(j)+ \sum_{j=2}^{n} \sum_{b=j+1}^n \frac{2\log b}{b}d_b(j).
 \end{eqnarray}
Recall that $\oB(j) =\sum_{b=2}^j \frac{j-1}{b-1}d_{b}(j)\log b$.  We next use the identity \eqref{eqn:identity1}  
to rewrite the first sum on the right in \eqref{eq:410}:
$$
 \sum_{j=2}^{n} \sum_{b=2}^j \frac{2\log b}{b} d_b(j) = \sum_{j=2}^{n}\sum_{b=2}^j\frac{2\log b}{b-1}d_b(j)-\sum_{j=2}^{n}\sum_{b=2}^j \frac{2\log b}{b(b-1)} d_b(j) = \oA_{11}(n)  - \oA_{R}(n). 
$$
Finally, we note that $d_b(j)=j$ for $j<b$; so the second sum on the right in \eqref{eq:410} is
$$
\sum_{j=2}^{n}\sum_{b=j+1}^n \frac{2\log b}{b}d_b(j) = \sum_{j=2}^{n} \sum_{b=j+1}^n  \frac{2j\log b}{b}=\oA_{12}(n).
$$

(2) We first bound $\oA_{R}(n)$ by 
$$
0 \le  \oA_{R}(n) \le \sum_{j=2}^n\sum_{b=2}^{n} \frac{2\log b}{b(b-1)} d_b(j) = \sum_{b=2}^n \frac{2 \log b}{b(b-1)} \left(S_b(n+1)-1\right).
$$ 
Applying Lemma \ref{lem:dbn-Sbn-bound}, to bound the last quantity, we obtain for $n \ge 2$, 
$$
\oA_{R}(n) < \sum_{b=2}^n \frac{(n+1)\log(n+1)}{b}  \le  3n(\log n)^2,
$$
as asserted.
\end{proof}

%
%

\subsection{Estimates for $\oA_{11}(n)$ and $\oA_{12}(n)$}\label{sec:33}
\label{subsec:oA11asympt}

%
%

\begin{lem} \label{lem:oA11}

For all integers $n \ge 2,$ we have
\begin{equation}\label{eqn:oA11bound} 
\oA_{11}(n)=(1-\gamma)n^2\log n+\left(\frac{3}{2}\gamma+\gamma_1-\frac{3}{2}\right)n^2+O\left(n^{3/2}\log n\right).
\end{equation} 
\end{lem}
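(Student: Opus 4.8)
\emph{Proof plan.} The plan is to feed the asymptotic expansion of $\oB(j)$ from Theorem~\ref{thm:oBn} into the definition $\oA_{11}(n)=\sum_{j=2}^n\frac{2}{j-1}\oB(j)$ and then evaluate the resulting elementary sums one at a time. Writing $\oB(j)=(1-\gamma)j^2\log j+(\gamma+\gamma_1-1)j^2+O\!\left(j^{3/2}\log j\right)$ and using the algebraic identity $\frac{j^2}{j-1}=j+1+\frac{1}{j-1}$, each summand becomes
$$\frac{2}{j-1}\oB(j)=(1-\gamma)\Big(2j+2+\tfrac{2}{j-1}\Big)\log j+(\gamma+\gamma_1-1)\Big(2j+2+\tfrac{2}{j-1}\Big)+O\!\left(\frac{j^{3/2}\log j}{j-1}\right).$$
Thus $\oA_{11}(n)$ is, up to the error term, a fixed linear combination of the sums $\sum_{j\le n}j\log j$, $\sum_{j\le n}\log j$, $\sum_{j\le n}\frac{\log j}{j-1}$, $\sum_{j\le n}j$, $\sum_{j\le n}1$, and $\sum_{j\le n}\frac{1}{j-1}$.

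Next I would estimate these sums. The only one contributing to the two displayed main terms of the lemma is $\sum_{j=2}^n j\log j=L_1(n)=\frac12 n^2\log n-\frac14 n^2+O(n\log n)$, supplied by Lemma~\ref{lem:Ln01-estimate}, together with $\sum_{j=2}^n j=\frac12 n^2+O(n)$. For the remaining sums I would note that $\sum_{j=2}^n\log j=\log(n!)=n\log n-n+O(\log n)$ by Stirling, that $\sum_{j=2}^n\frac{\log j}{j-1}=\sum_{j=2}^n\frac{\log j}{j}+\sum_{j=2}^n\frac{\log j}{j(j-1)}=\frac12(\log n)^2+O(1)$ using Lemma~\ref{lem:27} together with the convergence of $\sum_j\frac{\log j}{j(j-1)}$, and that $\sum_{j=2}^n\frac{1}{j-1}=H_{n-1}=O(\log n)$ by Lemma~\ref{lem:21n}; each of these, multiplied by a constant, is $O(n\log n)$ or smaller. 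Finally the error term satisfies $\sum_{j=2}^n\frac{j^{3/2}\log j}{j-1}\ll\sum_{j=2}^n j^{1/2}\log j\ll n^{3/2}\log n$, since $\frac{j}{j-1}\le 2$ for $j\ge 2$.

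Collecting, all contributions except $(1-\gamma)\cdot 2L_1(n)$ and $(\gamma+\gamma_1-1)\cdot\sum_{j=2}^n 2j$ fall inside the claimed remainder $O(n^{3/2}\log n)$, which leaves
$$\oA_{11}(n)=(1-\gamma)n^2\log n+\Big(-\tfrac12(1-\gamma)+(\gamma+\gamma_1-1)\Big)n^2+O\!\left(n^{3/2}\log n\right),$$
and since $-\tfrac12(1-\gamma)+(\gamma+\gamma_1-1)=\tfrac32\gamma+\gamma_1-\tfrac32$ this is exactly \eqref{eqn:oA11bound}. I do not anticipate a substantial obstacle here: the argument is essentially bookkeeping once Theorem~\ref{thm:oBn} is in hand, and the only points requiring care are verifying that the $\frac{1}{j-1}$-correction term (which generates a term of size $(\log n)^2$) and the $\log j$-terms are genuinely of lower order than $n^{3/2}\log n$, and combining the $n^2$-coefficients correctly.
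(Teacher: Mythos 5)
Your proof is correct and follows essentially the same route as the paper: substitute the asymptotic for $\oB(j)$ from Theorem~\ref{thm:oBn}, reduce to elementary sums, and use Lemma~\ref{lem:Ln01-estimate} for $\sum_{j\le n} j\log j$. The only difference is the order of operations---the paper splits $\frac{1}{j-1}=\frac{1}{j}+\frac{1}{j(j-1)}$ before substituting and disposes of the second sum via the crude bound $\oB(j)\ll j(j-1)\log j$, whereas you substitute first and then use the partial fraction $\frac{j^2}{j-1}=j+1+\frac{1}{j-1}$; the bookkeeping is equivalent.
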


\begin{proof}
We start from \eqref{eqn:oA11n} and use the identity \eqref{eqn:identity1} to rewrite $\oA_{11}(n)$:
\begin{equation}\label{eqn:A11n-start}
\oA_{11}(n)=\sum_{j=2}^{n} \frac{2}{j-1}\oB(j)=\sum_{j=2}^n\frac{2}{j}\oB(j)+\sum_{j=2}^n\frac{2}{j(j-1)}\oB(j).
\end{equation}
From Theorem \ref{thm:oBn}, it follows that $\oB(j)\ll j(j-1)\log j$ for $j\ge2$. As a result, the contribution from the last sum in \eqref{eqn:A11n-start} is negligible:
\begin{equation}\label{eqn:oA11n-part1}
\sum_{j=2}^n\frac{2}{j(j-1)}\oB(j)\ll \sum_{j=2}^n\log j\le\sum_{j=2}^n\log n<n\log n.
\end{equation}
Now, we estimate the first sum on the right of \eqref{eqn:A11n-start} using Theorem \ref{thm:oBn}:
\begin{align*}
\sum_{j=2}^n\frac{2}{j}\oB(j)&=2(1-\gamma)\sum_{j=2}^nj\log j+2\left(\gamma+\gamma_1-1\right)\sum_{j=2}^nj+O\bigg(\sum_{j=2}^n\sqrt{j}\log j\bigg)\\
&=2(1-\gamma)\sum_{j=2}^nj\log j+2\left(\gamma+\gamma_1-1\right)\left(\frac{1}{2}n^2+\frac{1}{2}n-1\right)+O\bigg(\sum_{j=2}^n\sqrt{n}\log n\bigg)\\
&=2(1-\gamma)\sum_{j=2}^nj\log j+\left(\gamma+\gamma_1-1\right)n^2+O\left(n^{3/2}\log n\right).
\end{align*}
We use Lemma \ref{lem:Ln01-estimate} to estimate $\sum_{j=2}^nj\log j$ and obtain
\begin{equation}\label{eqn:oA11n-part2}
\sum_{j=2}^n\frac{2}{j}\oB(j)=(1-\gamma)n^2\log n+\left(\frac{3}{2}\gamma+\gamma_1-\frac{3}{2}\right)n^2+O\left(n^{3/2}\log n\right).
\end{equation}
On inserting \eqref{eqn:oA11n-part1} and \eqref{eqn:oA11n-part2} into \eqref{eqn:A11n-start}, we obtain \eqref{eqn:oA11bound} as desired.
\end{proof}

%
%

\begin{lem} \label{lem:oA12}
For all integers $n \ge 2$, we have
$$\oA_{12}(n)=\frac{1}{2}n^2\log n-\frac{1}{4}n^2+O\left(n(\log n)^2\right).$$
\end{lem}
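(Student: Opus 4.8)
The plan is to evaluate the double sum $\oA_{12}(n) = \sum_{j=2}^n \sum_{b=j+1}^n \frac{2j\log b}{b}$ by interchanging the order of summation so that the variable $b$ becomes the outer index and $j$ the inner one. After the swap, $b$ ranges over $3 \le b \le n$ and $j$ ranges over $2 \le j \le b-1$, so that
\[
\oA_{12}(n) = \sum_{b=3}^n \frac{2\log b}{b} \sum_{j=2}^{b-1} j = \sum_{b=3}^n \frac{2\log b}{b} \left( \frac{(b-1)b}{2} - 1 \right) = \sum_{b=3}^n (b-1)\log b - \sum_{b=3}^n \frac{2\log b}{b}.
\]
The first term here is, up to $O(\log n)$ from the $b=2$ term and the $-\log b$ correction, the sum $L_1(n) = \sum_{b=2}^n b\log b$ minus $\sum_{b=2}^n \log b$; the second term is $2(J(n) - \tfrac{\log 2}{2})$ where $J(n) = O((\log n)^2)$.

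Next I would invoke the estimates already available in the excerpt: Lemma \ref{lem:Ln01-estimate} gives $L_1(n) = \frac{1}{2}n^2\log n - \frac{1}{4}n^2 + O(n\log n)$, which supplies the two main terms. The correction $\sum_{b=2}^n \log b = \log(n!) = n\log n - n + O(\log n)$ by Stirling's formula (stated as \eqref{eqn:Stirling} in the excerpt), which is only $O(n\log n)$ and hence absorbed into the error term. Likewise $J(n) = \frac{1}{2}(\log n)^2 + \gamma_1 + O\!\left(\frac{\log n}{n}\right)$ from Lemma \ref{lem:27} is $O((\log n)^2)$, also absorbed. Collecting everything yields $\oA_{12}(n) = \frac{1}{2}n^2\log n - \frac{1}{4}n^2 + O(n(\log n)^2)$, matching the claimed formula.

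I expect no serious obstacle here: this is a clean rearrangement followed by an application of the already-proven estimate for $L_1(n)$, with all secondary contributions falling comfortably inside the error term $O(n(\log n)^2)$. The only point requiring a little care is bookkeeping the difference between the range $b \ge 2$ used in the definition of $L_1(n)$ and the range $b \ge 3$ arising after the interchange, and keeping track of the $-1$ coming from $\sum_{j=2}^{b-1} j = \binom{b}{2} - 1$; both discrepancies are of lower order. If one wanted to be slightly more economical one could also bound $\sum_{b=3}^n \frac{2\log b}{b} \le 2(\log n)^2$ directly without invoking Lemma \ref{lem:27}, since the Stieltjes constant plays no role in the final answer at this level of precision.
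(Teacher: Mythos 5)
Your proposal is correct, but it follows a genuinely different route from the paper's proof. The paper keeps $j$ as the outer summation index, rewriting the inner $b$-sum as $2j(J(n)-J(j))$, then applies Lemma~\ref{lem:27} to expand $J(n)-J(j)$ and finishes by invoking Lemma~\ref{lem:Ln01-estimate} with $i=2$ to evaluate $\sum_{j=2}^n j(\log j)^2 = L_2(n)$; the $O(n(\log n)^2)$ error is inherited from that $L_2$ estimate. You instead interchange the order of summation, putting $b$ on the outside so that the inner sum $\sum_{j=2}^{b-1} j = \binom{b}{2}-1$ is exact, whence $\oA_{12}(n)=\sum_{b=3}^n (b-1)\log b - \sum_{b=3}^n \tfrac{2\log b}{b}$. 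This requires only $L_1(n)$ rather than $L_2(n)$, plus Stirling and a crude bound on $J(n)$, and the bookkeeping you flag (the $b=2$ term, the $-1$ from $\binom{b}{2}-1$) is indeed $O(1)$ and harmless. What your route buys is a modest gain: your estimate actually delivers the sharper error $O(n\log n)$, since $L_1(n)=\tfrac12 n^2\log n - \tfrac14 n^2 + O(n\log n)$ and both corrections $\log(n!)$ and $\sum \tfrac{\log b}{b}$ are $O(n\log n)$; the paper's approach is stuck at $O(n(\log n)^2)$ because of $L_2$. Either bound is sufficient for the lemma as stated, but you might as well record that your argument gives $O(n\log n)$.
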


\begin{proof} We can rewrite \eqref{eqn:oA12n} in terms of $J(x)=\sum_{1\le b\le x}\frac{\log b}{b}$ as
$$\oA_{12} (n) = \sum_{j=2}^n 2j (J(n) - J( j)).$$
For $2\le j\le n$, it follows from Lemma \ref{lem:27} that
$$
J(n)- J(j) = \frac{1}{2} (\log n)^2- \frac{1}{2}(\log j)^2  + O \left(\frac{\log n}{j}\right).
$$
Hence
\begin{align*}
\oA_{12}(n) &=\sum_{j=2}^n j (\log n)^2- \sum_{j=2}^n j(\log j)^2 +O\bigg(\sum_{b=2}^n\log n\bigg)\\
&= \left(\frac{1}{2}n^2+\frac{1}{2}n -1\right)(\log n)^2 -  \sum_{j=2}^n j (\log j)^2 + O \left( n \log n \right).
\end{align*} 
We use Lemma \ref{lem:Ln01-estimate} to estimate $\sum_{j=2}^nj(\log j)^2$ and obtain
$$
\oA_{12}(n) = \frac{1}{2} n^2 \log n -\frac{1}{4} n^2 + O \left( n (\log n)^2 \right),
$$
as desired.
\end{proof}

%
%

 \subsection{Proof of Theorem \ref{thm:oAn}  }\label{sec:35}

\begin{proof}[Proof of Theorem \ref{thm:oAn}]
By Lemma \ref{lem:31a} and Lemma \ref{lem:32a},
$$
\oA(n)  = \oA_1(n)+O \left( n (\log n)^2\right)= \oA_{11}(n) + \oA_{12}(n) + O \left( n (\log n)^2\right). 
$$
Inserting the estimates of Lemma \ref{lem:oA11} for $\oA_{11}(n)$ and Lemma \ref{lem:oA12} for $\oA_{12}(n)$ yields
$$
\oA (n) = \left(\frac{3}{2} -\gamma\right)n^2\log n+\left(\frac{3}{2}\gamma+\gamma_1-\frac{7}{4}\right)n^2+O\left(n^{3/2}\log n\right), 
$$
as required.
\end{proof}
%
%

 \subsection{Proof of Theorem  \ref{thm:oHn} }\label{sec:36}

\begin{proof}[Proof of Theorem \ref{thm:oHn}]
The estimate for $\oH_n$  follows  from the relation $\log \oH_n = \oA(n) - \oB(n)$ using
the estimates of Theorem \ref{thm:oBn} for $\oB(n)$ and Theorem  \ref{thm:oAn} for $\oA(n)$.
\end{proof}

%
%

\section{Estimates for the generalized partial factorization sums $\oB(n,x)$}\label{sec:asymp-oBnx}

This section and the next obtain a proof of Theorem \ref{thm:oHnx-main}, 
through making  estimates for $\oB(n,x)$ and $\oA(n,x)$ separately.

We derive estimates for $\oB(n,x)$ in the interval $1 \le x  \le n$
by varying $x$ downward, starting from the asymptotic estimates for $\oB(n)= \oB(n,n)$ in Theorem \ref{thm:oBn},
 useable as a black box.

In the next result,   $H_m= \sum_{k=1}^m \frac{1}{k}$ denotes the $m$-th harmonic number.

%
%

\begin{thm}\label{thm:oBnx}
Let $\oB(n,x) = \sum_{b=2}^{\lfloor x\rfloor}\frac{n-1}{b-1} d_b(n) \log b.$ 
Then for all integers $n \ge 2$ and real $x\in\left[\sqrt{n},n\right]$, 
\begin{equation}\label{eqn:oBnx-bound}
\oB(n,x) = \oB_0(n,x) \, n^2 \log n + \oB_1(n, x) \, n^2 + O \left( n^{3/2} \log n\right),
\end{equation}
where the functions $\oB_0(n,x)$ and $\oB_1(n,x)$ only depend on $\frac{x}{n}$ and are given by
\begin{equation}\label{eqn:oB0nx-bound}
\oB_0(n,x)  := \left(1-\gamma\right)+ \left(  H_{\lfloor \frac{n}{x} \rfloor} -  \log \frac{n}{x}\right)
- \frac{x}{n}\left\lfloor \frac{n}{x} \right\rfloor 
\end{equation}
and 
\begin{align}
\oB_1(n,x) := & 
 \left(\gamma+ \gamma_1 -1\right)- \left( H_{\lfloor \frac{n}{x}\rfloor} - \log \frac{n}{x}   \right)  - \left( J_{\lfloor \frac{n}{x}  \rfloor} - \frac{1}{2}\left(\log \frac{n}{x}  \right)^2 \right)\nonumber\\
& - \log \frac{n}{x}+ \left(\log \frac{n}{x}\right) \frac{x}{n} \left\lfloor \frac{n}{x} \right\rfloor  +  \frac{x}{n}\left\lfloor \frac{n}{x}  \right\rfloor.\label{eqn:oB1nx-bound}
\end{align}
Moreover, for all integers $n\ge2$ and real $x\in\left[1,\sqrt{n}\right]$, 
\begin{equation}\label{eqn:oBnx-bound2} 
\oB(n, x) = O \left( n^{3/2} \log n \right) .
\end{equation} 
\end{thm}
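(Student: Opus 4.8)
The plan is to run the same decomposition used in the proof of Theorem \ref{thm:oBn}, but now carry along the parameter $x$ instead of specializing to $x=n$. Recall from Lemma \ref{lem:diff1} that for $\sqrt{n}<b\le n$ the integer $n$ has exactly two base-$b$ digits, and $\frac{n-1}{b-1}d_b(n)\log b = (n-1)\big(\frac{n\log b}{b-1} - \lfloor\frac{n}{b}\rfloor\log b\big)$. So for $x\in[\sqrt n,n]$ I would write
\begin{equation*}
\oB(n,x) = \oB_R(n) + (n-1)\Bigg(n\sum_{\sqrt n < b\le x}\frac{\log b}{b-1} - \sum_{\sqrt n< b\le x}\left\lfloor\frac{n}{b}\right\rfloor\log b\Bigg),
\end{equation*}
where $\oB_R(n)=\sum_{2\le b\le\sqrt n}\frac{n-1}{b-1}d_b(n)\log b = O(n^{3/2}\log n)$ by Lemma \ref{prop:32n}(2). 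The second inner sum is exactly $\oC(n,x)-\oC(n,\sqrt n)$ in the notation of Section \ref{subsec:22nn3}. So the whole estimate reduces to (i) estimating $\sum_{\sqrt n<b\le x}\frac{\log b}{b-1}$, and (ii) estimating $\oC(n,x)-\oC(n,\sqrt n)$, and then multiplying by $(n-1)=n+O(1)$ and recombining.

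For step (i): split $\frac{1}{b-1}=\frac1b+\frac1{b(b-1)}$; the tail piece contributes $O(\log n/\sqrt n)$ as in Lemma \ref{lem:oB1}, and $\sum_{\sqrt n<b\le x}\frac{\log b}{b} = J(x)-J(\sqrt n) = \frac12(\log x)^2 - \frac18(\log n)^2 + O(\log n/\sqrt n)$ by Lemma \ref{lem:27}. For step (ii): by Lemma \ref{cor:24}, $\oC(n,\sqrt n) = \frac18 n(\log n)^2 + \gamma_1 n + O(\sqrt n\log n)$; and by Proposition \ref{prop:23n}(1) together with the difference formula \eqref{eqn:oCn-diff}–\eqref{eqn:oCn-integral}, I can write $\oC(n,x) = \oC(n,n) - \big[\oC(n,n)-\oC(n,x)\big]$ with $\oC(n,n)$ known and $\oC(n,n)-\oC(n,x) = \big(H_{\lfloor n/x\rfloor}-\frac{x}{n}\lfloor\frac{n}{x}\rfloor\big)(n\log n - n) - \big(J_{\lfloor n/x\rfloor} - \frac{x}{n}\lfloor\frac{n}{x}\rfloor\log\frac{n}{x}\big)n + O(n\log n/x)$. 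Since $x\ge\sqrt n$, the error $O(n\log n/x)$ is $O(\sqrt n\log n)$. Substituting $\log x = \log n - \log\frac{n}{x}$ everywhere, multiplying through by $n$ (from the leading $(n-1)n$ and $(n-1)$), and collecting the coefficient of $n^2\log n$ and of $n^2$ should produce exactly $\oB_0(n,x)$ and $\oB_1(n,x)$ as displayed; the remaining terms are $O(n^{3/2}\log n)$. The bookkeeping is the bulk of the work but is entirely mechanical once the three input estimates are in hand.

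For the range $x\in[1,\sqrt n]$: here every $b\le x$ has $b\le\sqrt n$, so $\oB(n,x)\le \oB_R(n) = O(n^{3/2}\log n)$ directly from the bound in Lemma \ref{prop:32n}(2) (indeed from $d_b(n)\le\frac{(b-1)\log(n+1)}{\log b}$ in Lemma \ref{lem:dbn-Sbn-bound}, giving $\oB(n,x)\le (\sqrt n-1)(n-1)\log(n+1)$), which establishes \eqref{eqn:oBnx-bound2}. The only genuine subtlety — the "main obstacle" — is verifying that the algebraic recombination in the $x\in[\sqrt n,n]$ case really collapses to the stated closed forms $\oB_0,\oB_1$, in particular tracking the $\log\frac{n}{x}$ versus $\log x$ substitutions and confirming that the $J_{\lfloor n/x\rfloor}$ and $\frac12(\log\frac{n}{x})^2$ terms land with the right signs; I would double-check this by evaluating both sides at $x=n$ (where $\lfloor n/x\rfloor=1$, $H_1=1$, $J_1=0$) and confirming it recovers Theorem \ref{thm:oBn}.
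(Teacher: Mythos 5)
Your proposal is correct, and it reaches the same estimate by a decomposition that is a close cousin of the paper's but organized from the opposite end. The paper's proof writes $\oB(n,x)=\oB(n)-\oB^c(n,x)$, reads off $\oB(n)$ from the already-established Theorem~\ref{thm:oBn}, and applies the two-digit identity $d_b(n)=n-(b-1)\lfloor n/b\rfloor$ to the complement range $(x,n]$ (valid since $b>x\ge\sqrt{n}$), producing $\oB^c(n,x)=\oB^c_{11}(n,x)-(n-1)\bigl(\oC(n,n)-\oC(n,x)\bigr)$, which is then estimated by Lemma~\ref{lem:52} and Proposition~\ref{prop:23n}(2). You instead apply the two-digit identity on $(\sqrt{n},x]$, keep the small-$b$ tail as $\oB_R(n)=O(n^{3/2}\log n)$, and assemble from $J(x)-J(\sqrt{n})$, $\oC(n,\sqrt{n})$ (Lemma~\ref{cor:24}), $\oC(n,n)$ (Proposition~\ref{prop:23n}(1)), and $\oC(n,n)-\oC(n,x)$ (Proposition~\ref{prop:23n}(2)). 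Both routes rest on the same core lemmas; the paper's version saves the redundant re-derivation of $\oB(n)$, while yours is slightly more self-contained and unrolls to a re-proof of Theorem~\ref{thm:oBn} at $x=n$, which you use (correctly) as a consistency check with $H_1=1$, $J_1=0$. The error analysis is right: since $x\ge\sqrt{n}$, the $O(n\log n/x)$ term from Proposition~\ref{prop:23n}(2) is $O(\sqrt{n}\log n)$, and after the outer multiplication by $(n-1)$ everything is absorbed into $O(n^{3/2}\log n)$. The $x\in[1,\sqrt{n}]$ bound via $d_b(n)\le(b-1)\log(n+1)/\log b$ matches the paper's argument exactly.
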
 


\begin{rem}\label{rmk:52}
The answer displays a {\em scale invariance}, in terms of  the variables $x$
and $n$.  That is, the  functions $\oB_0(n,x)$ and $\oB_1(n, x)$ above turn out to be  functions of a single variable $\alpha :=\frac{x}{n}$ 
having $0 \le \alpha \le 1.$  However in the proof itself, various calculations contain   terms in $n$ and $x$ that are {\em not} scale invariant.
\end{rem}

%
%

\subsection{Preliminary reduction} \label{subsec:prelim5}

We write
\begin{equation}
\oB(n, x) = \oB(n) - \oB^c(n,x),
\end{equation} 
where $ \oB^c(n,x)$ is the  complement sum
\begin{equation}\label{eqn:Bcnx}
\oB^c(n,x) := 
\sum_{x < b \le n} \frac{n-1}{b-1} d_b(n) \log b.
\end{equation}
The sum $\oB(n)$ can be estimated by Theorem \ref{thm:oBn}.
To estimate $\oB^c(n,x)$, we break it into two parts.

%
%

\begin{lem}\label{lem:51a}
For all integers $n \ge 2$ and real numbers $x$ such that $\sqrt{n} \le x\le n$, we have
\begin{equation}\label{eqn:Bcc}
\oB^c(n,x) = \oB_{11}^c(n,x) - (n-1) \left(\oC(n, n) - \oC(n,x)\right),
\end{equation} 
where $\oC(n,x)$ is given by \eqref{eqn:oC-defn} and
\begin{equation}\label{eqn:B11c} 
\oB_{11}^c(n,x) := n (n-1) \sum_{x < b \le n} \frac{\log b}{b-1}.
\end{equation}
\end{lem}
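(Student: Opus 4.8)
The plan is to expand each summand of $\oB^c(n,x)$ by means of the two-digit identity in Lemma \ref{lem:diff1} and then recognize the two sums that emerge. The key observation is that, since $x \ge \sqrt{n}$, every integer $b$ in the summation range $x < b \le n$ of \eqref{eqn:Bcnx} satisfies $b > \sqrt{n}$. Setting $j := \left\lfloor \frac{n}{b} \right\rfloor$, this $j$ is the unique positive integer with $b \in I(j,n)$, so part (2) of Lemma \ref{lem:diff1} applies to each such $b$ and gives $d_b(n) = n - \left\lfloor \frac{n}{b}\right\rfloor(b-1)$; equivalently, by \eqref{eqn:b-digitsum}, $\frac{n-1}{b-1} d_b(n) \log b = (n-1)\left(\frac{n\log b}{b-1} - \left\lfloor \frac{n}{b}\right\rfloor \log b\right)$.

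Next I would substitute this term by term into \eqref{eqn:Bcnx} and split the resulting sum into two pieces:
$$
\oB^c(n,x) = n(n-1)\sum_{x < b \le n}\frac{\log b}{b-1} \; - \; (n-1)\sum_{x < b \le n}\left\lfloor\frac{n}{b}\right\rfloor\log b.
$$
The first sum is precisely $\oB_{11}^c(n,x)$ by its definition \eqref{eqn:B11c}. For the second, I would invoke the definition \eqref{eqn:oC-defn} of $\oC(n,y)=\sum_{1\le b\le y}\left\lfloor\frac{n}{b}\right\rfloor\log b$ (the $b=1$ term vanishing since $\log 1 = 0$), which identifies $\sum_{x < b \le n}\left\lfloor\frac{n}{b}\right\rfloor\log b = \oC(n,n) - \oC(n,x)$. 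Combining these two identifications yields exactly \eqref{eqn:Bcc}.

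I do not expect any genuine obstacle here: the argument is essentially bookkeeping once Lemma \ref{lem:diff1} is in hand. The one point deserving attention is the verification that the hypothesis of Lemma \ref{lem:diff1} holds uniformly over the summation range — that is, that $b > \sqrt{n}$ for all integers $b$ with $x < b \le n$ — which is immediate from the standing assumption $x \ge \sqrt{n}$. This is also exactly the place where the restriction $\sqrt{n} \le x \le n$ in the hypotheses of the lemma is used.
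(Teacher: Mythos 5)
Your proof is correct and follows essentially the same route as the paper's: both reduce to the two-digit identity $d_b(n)=n-(b-1)\lfloor n/b\rfloor$ valid for $b>\sqrt{n}$, substitute term by term, and recognize $\oB_{11}^c(n,x)$ and $\oC(n,n)-\oC(n,x)$. The only cosmetic difference is that you invoke Lemma~\ref{lem:diff1}(2), whereas the paper re-derives the same digit identity directly from \eqref{eqn:dbn-formula} by observing $\lfloor n/b^i\rfloor=0$ for $i\ge2$ once $b^2>n$.
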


\begin{proof}
Recall from \eqref{eqn:dbn-formula} that
$d_b(n)= n- (b-1)\sum_{i=1}^{\infty} \left\lfloor \frac{n}{b^i} \right\rfloor$.
Since $x > \sqrt{n}$, if  $b>x$, then $b^2>x^2\ge n$, and hence $\left\lfloor\frac{n}{b^i}\right\rfloor=0$ for all $i\ge2$. In this circumstance, $d_b(n)=n-(b-1)\left\lfloor\frac{n}{b}\right\rfloor$ for $b>x$. 
Inserting this formula into  the definition \eqref{eqn:Bcnx}, we obtain
\begin{align}
\oB^c(n,x)&=n(n-1)\sum_{x<b\le n}\frac{\log b}{b-1}-(n-1)\sum_{x<b\le n}\left\lfloor\frac{n}{b}\right\rfloor\log b\nonumber\\
&= \oB_{11}^c(n,x) - (n-1) \left(\oC(n, n) - \oC(n,x)\right)
\end{align}
as required.
\end{proof}

%
%

\subsection{Estimate for $\oB_{11}^c(n,x)$}\label{sec:51}

%
%

\begin{lem}\label{lem:52}
For all real numbers $n \ge 2$ and $x$ such that $1\le x\le n$, we have
\begin{equation}\label{eqn:oBc11-estimate}
\oB_{11}^c(n,x) = \frac{1}{2}n^2(\log n)^2-\frac{1}{2}n^2(\log x)^2+O\left(\frac{n^2\log n}{x}\right).
\end{equation}
\end{lem}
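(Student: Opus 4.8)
The plan is to peel off the factor $n(n-1)$ from the definition \eqref{eqn:B11c} and estimate the remaining sum $\sum_{x < b \le n}\frac{\log b}{b-1}$. First I would apply the identity \eqref{eqn:identity1}, $\frac{1}{b-1} = \frac{1}{b} + \frac{1}{b(b-1)}$, to split
$$\sum_{x < b \le n}\frac{\log b}{b-1} = \bigl(J(n) - J(x)\bigr) + \sum_{x < b \le n}\frac{\log b}{b(b-1)},$$
where $J$ is the step function studied in Lemma \ref{lem:27}. Every integer $b$ appearing here satisfies $b \ge 2$ (since $x \ge 1$), so no term is singular.

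For the first summand, Lemma \ref{lem:27} applied at $n$ and at $x$ gives
$$J(n) - J(x) = \frac{1}{2}(\log n)^2 - \frac{1}{2}(\log x)^2 + O\!\left(\frac{\log(n+1)}{n}\right) + O\!\left(\frac{\log(x+1)}{x}\right),$$
and since $1 \le x \le n$ each of these two error terms is $O\!\left(\frac{\log n}{x}\right)$. For the second summand I would bound $\log b \le \log n$ and use the telescoping evaluation $\sum_{b > x}\frac{1}{b(b-1)} = \frac{1}{\lfloor x\rfloor} \le \frac{2}{x}$, valid for all real $x \ge 1$, so this contribution is $O\!\left(\frac{\log n}{x}\right)$ as well. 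Combining gives
$$\sum_{x < b \le n}\frac{\log b}{b-1} = \frac{1}{2}(\log n)^2 - \frac{1}{2}(\log x)^2 + O\!\left(\frac{\log n}{x}\right).$$

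It then remains to multiply through by $n(n-1) = n^2 - n$. The error term contributes $n(n-1)\cdot O\!\left(\frac{\log n}{x}\right) = O\!\left(\frac{n^2 \log n}{x}\right)$, and the $n^2$ part of the main term produces exactly $\frac{1}{2}n^2(\log n)^2 - \frac{1}{2}n^2(\log x)^2$. The one point that needs care — and the only real obstacle — is to check that the leftover piece $\frac{1}{2}n\bigl((\log n)^2 - (\log x)^2\bigr)$, coming from the $-n$ in $n(n-1)$, is also absorbed into $O\!\left(\frac{n^2\log n}{x}\right)$; this follows from the factorization $(\log n)^2 - (\log x)^2 = \log\frac{n}{x}\cdot\log(nx)$ together with the elementary inequalities $\log\frac{n}{x}\le\frac{n}{x}$ and $\log(nx)\le 2\log n$ (both using $1 \le x\le n$), which yield $(\log n)^2 - (\log x)^2 \le \frac{2n\log n}{x}$. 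Assembling these estimates gives \eqref{eqn:oBc11-estimate}, with the implied constant independent of $x$ throughout $1 \le x \le n$.
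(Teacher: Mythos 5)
Your proof is correct and follows essentially the same route as the paper: the same split $\frac{1}{b-1}=\frac{1}{b}+\frac{1}{b(b-1)}$, the same use of Lemma \ref{lem:27} for $J(n)-J(x)$, the same telescoping bound $\sum_{b>x}\frac{1}{b(b-1)}=\frac{1}{\lfloor x\rfloor}\le\frac{2}{x}$, and the same final observation that the leftover $\frac{1}{2}n\bigl((\log n)^2-(\log x)^2\bigr)$ is absorbed via $(\log n)^2-(\log x)^2=\log\frac{n}{x}\cdot\log(nx)\le\frac{n}{x}\cdot2\log n$. There is nothing further to compare.
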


\begin{proof}
We start from \eqref{eqn:B11c} and use the identity \eqref{eqn:identity1} to rewrite $\frac{1}{n(n-1)}\oB_{11}^c(n, x)$:
\begin{equation}\label{eqn:oBc11-start}
\frac{1}{n(n-1)}\oB_{11}^c(n, x) = \sum_{x < b \le n} \frac{\log b}{b} + \sum_{x< b \le n} \frac{\log b}{b (b-1)}.
\end{equation}
The contribution from the last sum in \eqref{eqn:oBc11-start} is negligible:
\begin{equation}\label{eqn:oBc11-error}
 0\le \sum_{x< b \le n} \frac{\log b}{b (b-1)}<(\log n)\sum_{b>x}\frac{1}{b(b-1)}=\frac{\log n}{\lfloor x\rfloor}<\frac{2\log n}{x}.
\end{equation}
The first sum on the right in \eqref{eqn:oBc11-start} can be estimated using Lemma \ref{lem:27}: 
\begin{equation}\label{eqn:oBc11-main}
\sum_{x < b \le n} \frac{\log b}{b}=J(n)- J(x)=\frac{1}{2}(\log n)^2 - \frac{1}{2}(\log x)^2  + O\left( \frac{\log n}{x}\right).
\end{equation}
On inserting \eqref{eqn:oBc11-error} and \eqref{eqn:oBc11-main} into \eqref{eqn:oBc11-start}, we obtain
$$
\frac{1}{n(n-1)}\oB_{11}^c(n,x) = \frac{1}{2}(\log n)^2-\frac{1}{2}(\log x)^2+O\left(\frac{\log n}{x}\right).
$$
On multiplying by $n(n-1)$, we obtain
$$
\oB_{11}^c(n,x) = \frac{1}{2}n^2(\log n)^2-\frac{1}{2}n^2(\log x)^2-\left(\frac{1}{2}n(\log n)^2-\frac{1}{2}n(\log x)^2\right)+O\left(\frac{n^2\log n}{x}\right).
$$
Since $e^t=1+t+\frac{t^2}{2}+\dots>t$ for $t>0$, it follows that $\log\frac{n}{x}<\frac{n}{x}$ and
$$
0\le\frac{1}{2}n(\log n)^2-\frac{1}{2}n(\log x)^2=\frac{1}{2}n\left(\log\frac{n}{x}\right)(\log n+\log x)<\frac{1}{2}n\left(\frac{n}{x}\right)(2\log n)=\frac{n^2\log n}{x}.
$$
Hence \eqref{eqn:oBc11-estimate} follows.
\end{proof}

%
%

\subsection{Estimate for $\oB^c(n,x)$}\label{subsec:54}

We obtain an asymptotic estimate for $\oB^c(n,x)$.

%
%

\begin{prop}\label{prop:56} 
For all integers $n \ge 2$ and real numbers $x$ such that $\sqrt{n}\le x\le n$, we have
\begin{eqnarray}\label{eqn:oBc0} 
 \oB^c(n,x) &=& n^2\left( J \left( \frac{n}{x} \right) - \frac{1}{2} \left(\log \frac{n}{x}\right)^2 \right)  -
 (n^2 \log n - n^2) \left( H_{\lfloor \frac{n}{x} \rfloor}  - \log \frac{n}{x}\right) \nonumber \\
  &&+\,n^2\left( 1- \left\lfloor \frac{n}{x} \right\rfloor \frac{x}{n} \right) \log \frac{n}{x} 
 + (n^2\log n -n^2) \left\lfloor \frac{n}{x} \right\rfloor \frac{x}{n}
 +O \left( n^{3/2} \log n\right).
 \end{eqnarray} 
\end{prop}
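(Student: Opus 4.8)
The plan is to assemble the proposition from three preparatory results already in hand: the decomposition $\oB^c(n,x) = \oB_{11}^c(n,x) - (n-1)\bigl(\oC(n,n)-\oC(n,x)\bigr)$ of Lemma \ref{lem:51a}, valid for $\sqrt n\le x\le n$; the estimate for $\oB_{11}^c(n,x)$ in Lemma \ref{lem:52}; and the evaluation of $\oC(n,n)-\oC(n,x)$ supplied by Proposition \ref{prop:23n}(2), that is, equations \eqref{eqn:oCn-diff} and \eqref{eqn:oCn-integral}. All three hold on the range $\sqrt n\le x\le n$, so the argument is purely algebraic simplification together with bookkeeping of error terms.

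First I would treat $\oB_{11}^c(n,x)$. Lemma \ref{lem:52} gives $\oB_{11}^c(n,x)=\frac12 n^2(\log n)^2-\frac12 n^2(\log x)^2+O(n^2\log n/x)$; substituting $\log x=\log n-\log\frac nx$ and expanding the difference of squares rewrites the main term as $n^2(\log\frac nx)\log n-\frac12 n^2(\log\frac nx)^2$, and since $x\ge\sqrt n$ the error is $O(n^{3/2}\log n)$.

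Next I would treat the subtracted term. By Proposition \ref{prop:23n}(2), $\oC(n,n)-\oC(n,x)=\int_x^n\lfloor n/u\rfloor\log u\,du+O(n\log n/x)$, and \eqref{eqn:oCn-integral} writes the integral as $\bigl(H_{\lfloor n/x\rfloor}-\frac xn\lfloor \frac nx\rfloor\bigr)(n\log n-n)-\bigl(J_{\lfloor n/x\rfloor}-\frac xn\lfloor\frac nx\rfloor\log\frac nx\bigr)n$. The key point is that $x\ge\sqrt n$ forces $\frac nx\le\sqrt n$, whence $H_{\lfloor n/x\rfloor}\ll\log n$, $J_{\lfloor n/x\rfloor}\ll(\log n)^2$, and $\frac xn\lfloor\frac nx\rfloor\in[0,1]$. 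Therefore, multiplying by $n-1$ and replacing $(n-1)(n\log n-n)$ by $n^2\log n-n^2$ and $(n-1)n$ by $n^2$ introduces only errors of size $O(n(\log n)^2)$, while $(n-1)\cdot O(n\log n/x)=O(n^{3/2}\log n)$; here I use the elementary fact $(\log n)^2=O(\sqrt n)$. This gives
$$(n-1)\bigl(\oC(n,n)-\oC(n,x)\bigr)=(n^2\log n-n^2)\Bigl(H_{\lfloor n/x\rfloor}-\tfrac xn\lfloor\tfrac nx\rfloor\Bigr)-n^2\Bigl(J_{\lfloor n/x\rfloor}-\tfrac xn\lfloor\tfrac nx\rfloor\log\tfrac nx\Bigr)+O(n^{3/2}\log n).$$

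Subtracting this from the estimate for $\oB_{11}^c(n,x)$ and using $J_{\lfloor n/x\rfloor}=J(n/x)$, every term involving $H_{\lfloor n/x\rfloor}$, $J(n/x)$, $(\log\frac nx)^2$, or $\frac xn\lfloor\frac nx\rfloor\log\frac nx$ already matches one in \eqref{eqn:oBc0}; the only remaining check is that the leftover contribution $n^2(\log\frac nx)\log n$ from $\oB_{11}^c$ equals $(n^2\log n-n^2)\log\frac nx+n^2\log\frac nx$, which is immediate. This establishes \eqref{eqn:oBc0}. There is no conceptual difficulty here; the one place that warrants care is verifying that each discrepancy between the $(n-1)$-weighted quantities produced by Lemmas \ref{lem:51a} and \ref{lem:52} and Proposition \ref{prop:23n} and the $n^2$-weighted quantities in \eqref{eqn:oBc0} is absorbed into the stated $O(n^{3/2}\log n)$ remainder, each such step reducing to $(\log n)^2=O(\sqrt n)$ together with $x\ge\sqrt n$.
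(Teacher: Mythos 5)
Your proof is correct and follows the same route as the paper's: combine Lemma \ref{lem:51a}, Lemma \ref{lem:52}, and Proposition \ref{prop:23n}(2), replace $(n-1)$ by $n$ with a controlled error, and reorganize. Your error bookkeeping is in fact cleaner than the paper's own intermediate displays (the paper carries an $O(n^{3/2}(\log n)^2)$ through its displayed steps and has apparent sign typos there, though its final statement \eqref{eqn:oBc0} is correct), and your observation that $n(\log n)^2 = O(n^{3/2}\log n)$ is exactly the right justification for absorbing the $(n-1)\to n$ error.
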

\begin{proof}
We have
\begin{equation}\label{eqn:oBnc}
\oB^c(n,x)  = \oB_{11}^c(n,x) - (n-1)\left( \oC(n,n)- \oC(n, x)\right). 
\end{equation} 
We suppose $\sqrt{n} \le x\le n.$ From  Lemma \ref{lem:52} we obtain
\begin{equation}\label{eqn:oB11c} 
\oB_{11}^c(n,x) = \frac{1}{2} n^2 \left( (\log n)^2 - (\log x)^2\right) + O \left( n^{3/2} \log n\right).
\end{equation} 
Now Proposition  \ref{prop:23n}(2) gives for $2 \le x \le n$, 
\begin{equation}
\oC(n,n)-\oC(n,x) =
\left(H_{\left\lfloor\frac{n}{x}\right\rfloor}-\frac{x}{n}\left\lfloor\frac{n}{x}\right\rfloor\right)(n\log n-n) 
- \left( J_{\left\lfloor\frac{n}{x}\right\rfloor}- \frac{x}{n}\left\lfloor\frac{n}{x}\right\rfloor \log\frac{n}{x}
 \right)n +O\left(\frac{n\log n}{x}\right).
\end{equation} 
Substituting these estimates into \eqref{eqn:oBnc}, and assuming $x \ge \sqrt{n}$  yields
\begin{eqnarray}\label{eq:version1}
\oB^c(n,x)  &= &\frac{1}{2} n^2 \left( (\log n)^2 - (\log x)^2\right) + n \left(H_{\left\lfloor\frac{n}{x}\right\rfloor} 
-\frac{x}{n}\left\lfloor\frac{n}{x}\right\rfloor  \right)(n\log n-n)\nonumber \\
&&- n \left( J_{\left\lfloor\frac{n}{x}\right\rfloor}- \frac{x}{n}\left\lfloor\frac{n}{x}\right\rfloor \log\frac{n}{x}
\right) + O \left( n^{3/2}( \log n)^2 \right). 
\end{eqnarray} 
In this formula we also replaced  a factor $(n-1)$ by $n$, introducing an error $O ( n (\log n)^2)$ absorbed in the remainder term. 
Our goal is to simplify this expression to obtain \eqref{eqn:oBc0}. We  rewrite \eqref{eq:version1} as
\begin{eqnarray}
\oB^c(n,x)  &=&      n^2 \left( J_{\left\lfloor\frac{n}{x}\right\rfloor} - \frac{1}{2}\left(\log \frac{n}{x}\right)^2 \right )
+(n^2 \log n -n^2 )\left( H_{\left\lfloor\frac{n}{x}\right\rfloor} - \log \frac{n}{x} \right)\nonumber\\
&&+ \oB_{21}^c(n,x)+O \left( n^{3/2} (\log n)^2\right),\label{eq:version2}
\end{eqnarray}
where we define
\begin{eqnarray} \label{eq:version3}
\oB_{21}^c(n,x) &:= &\left( \frac{1}{ 2}n^2 \left(\log \frac{n}{x}\right)^2 - n^2 (\log n) \left( \log \frac{n}{x}\right)  + n^2 \left(\log \frac{n}{x}\right) \right)  +\frac{1}{2} n^2 \left( (\log n)^2 - (\log x)^2\right)\nonumber \\
&&
-\frac{x}{n}\left\lfloor\frac{n}{x}\right\rfloor \left(n^2\log n -n^2\right)
- \left(\frac{x}{n}\left\lfloor\frac{n}{x}\right\rfloor\right) n^2\log\frac{n}{x} .
\end{eqnarray} 
Expanding  $\log \frac{n}{x} = \log n - \log x$  in  the first two terms in \eqref{eq:version3}  gives 
$$
\frac{1}{2} \left(\log \frac{n}{x}\right)^2- n^2 (\log n) \left(\log \frac{n}{x}\right) = -\frac{1}{2} n^2(\log n)^2 + \frac{1}{2} n^2 (\log x)^2,
$$
which cancels the next two terms appearing in \eqref{eq:version3}.
Rearranging the remaining uncancelled terms results in 
\begin{equation} \label{eq;version4}
\oB_{21}^c(n,x) = \frac{x}{n}\left\lfloor\frac{n}{x}\right\rfloor \left(n^2\log n -n^2\right)
+ \left(1- \frac{x}{n}\left\lfloor\frac{n}{x}\right\rfloor\right) n^2 \log\frac{n}{x},
\end{equation} 
which  when substituted in \eqref{eq:version2} yields \eqref{eqn:oBc0}. 
\end{proof}

%
%

\subsection{Proof of Theorem \ref{thm:oBnx}}\label{subsec:55}

We obtain an estimate of $\oB(n,x)$.

\begin{proof}[Proof of Theorem \ref{thm:oBnx}]
For $n \ge x \ge 1$ we  have the decomposition
\begin{equation}
\oB(n,x) = B(n) -\oB^c(n,x).
\end{equation}
By Theorem \ref{thm:oBn} we have for $n \ge 2$, 
$$
 \oB(n)=(1-\gamma)n^2\log n+\left(\gamma+\gamma_1-1\right)n^2+O\left(n^{3/2}\log n\right).
$$
By Proposition \ref{prop:56} we have for $n \ge 2$ and $\sqrt{n} \le x \le n$, 
\begin{eqnarray*} 
\oB^c(n,x)  &:=&  n^2\left( J_{\lfloor \frac{n}{x} \rfloor} - \frac{1}{2} \bigg(\log \frac{n}{x}\bigg)^2 \right)  -
 \left(n^2 \log n - n^2\right) \left( H_{\lfloor \frac{n}{x} \rfloor}  - \log \frac{n}{x} \right) \nonumber \\
  && + \, n^2\left( 1- \bigg\lfloor \frac{n}{x} \bigg\rfloor \frac{x}{n} \right) \log \frac{n}{x} 
 + \left(n^2\log n -n^2\right)\bigg\lfloor \frac{n}{x} \bigg\rfloor \frac{x}{n} 
 +O \left( n^{3/2} \log n\right). 
\end{eqnarray*}
We obtain for $n \ge 2$ and $\sqrt{n} \le x \le n$, 
\begin{eqnarray}\label{eqn:oBnx-estimate51} 
\oB(n,x) & = & \oB_0(n, x) \, n^2 \log n + \oB_1 (n,x) \, n^2 + O \left(n^{3/2}\log n \right),
\end{eqnarray}
with
\begin{eqnarray*}
 \oB_0(n, x) &= & (1-\gamma) +\left( H_{\lfloor \frac{n}{x} \rfloor}  - \log \frac{n}{x} \right) -\bigg\lfloor \frac{n}{x} \bigg\rfloor \frac{x}{n}\quad\quad\quad\quad
\end{eqnarray*} 
and
\begin{eqnarray*}
 \oB_1(n, x) &=& \left(\gamma+\gamma_1-1\right)- 
 \left( H_{\lfloor \frac{n}{x} \rfloor}  - \log \frac{n}{x} \right)- \left( J_{ \lfloor \frac{n}{x} \rfloor} - \frac{1}{2} \bigg(\log \frac{n}{x}\bigg)^2 \right) 
 \\&&
  - \left( 1- \bigg\lfloor \frac{n}{x} \bigg\rfloor \frac{x}{n} \right)\log \frac{n}{x}   +\bigg\lfloor \frac{n}{x} \bigg\rfloor \frac{x}{n},
\end{eqnarray*} 
which is \eqref{eqn:oBnx-bound}.

Finally, for integers $n\ge2$ and real $x\in\left[1,\sqrt{n}\right]$, we have
\begin{align*}
\oB(n,x)&=\sum_{2\le b\le x}\frac{n-1}{b-1}d_b(n)\log b\\
&\le\sum_{2\le b\le x}(n-1)\log(n+1)\\
&<x(n-1)\log(n+1)\\
&<2n^{3/2}\log n,
\end{align*} 
where the bound of Lemma \ref{lem:dbn-Sbn-bound} for $d_b(n)$ was used in the first inequality.
We have obtained \eqref{eqn:oBnx-bound2}.
\end{proof}

%
%

\subsection{Proof of Theorem \ref{thm:oBnx-cor}}\label{subsec:56}

\begin{proof}[Proof of Theorem \ref{thm:oBnx-cor}]
We estimate $\oB(n, \alpha n)$.
The theorem follows on choosing  $x= \alpha n$ in Theorem \ref{thm:oBnx} and simplifying. Now $\oB_0(n, x)= f_{\oB}(\alpha)  $ is
a function of $\alpha= \frac{x}{n}$, with 
$$
f_{\oB}(\alpha) = (1-\gamma) + \left( H_{\lfloor \frac{1}{\alpha}\rfloor} - \log \frac{1}{\alpha} \right) - \alpha \bigg\lfloor \frac{1}{\alpha} \bigg\rfloor.
$$
Similarly $\oB_1(n,x) = g_{\oB}(\alpha)$ is a function of $\alpha$ with 
\begin{eqnarray*}
g_{\oB}(\alpha) &=& \left(\gamma+\gamma_1-1 \right)- 
 \left( H_{\lfloor \frac{1}{\alpha}  \rfloor}  - \log \frac{1}{\alpha} \right) - \left( J_{ \lfloor \frac{1}{\alpha} \rfloor} - 
 \frac{1}{2} \bigg(\log \frac{1}{\alpha}\bigg)^2 \right)  \\
 &&\quad + \left( \alpha \bigg\lfloor \frac{1}{\alpha}  \bigg\rfloor -1 \right) \log \frac{1}{\alpha}   + \bigg\lfloor \frac{1}{\alpha} \bigg\rfloor \alpha.
\end{eqnarray*} 
We allow $\frac{1}{\sqrt{n}} \le \alpha \le 1$, and for $n \ge 2$ the remainder term in the estimate is $O \left( n^{3/2} \log n\right)$,
independent of $\alpha$ in this range. For the range $x\in \left[1, \sqrt{n}\right]$ we use the
final estimate \eqref{eqn:oBnx-bound2}.
\end{proof}


\begin{rem}\label{rem:34}
The function $f_{\oB}(\alpha)$ has  $f_{\oB}(1)=1-\gamma$, and has  $\lim_{\alpha \to 0^+} f_{\oB}(\alpha)=0$   since $ H_{\lfloor \frac{1}{\alpha}\rfloor}- \log \frac{1}{\alpha}  \to \gamma$
as $\alpha \to 0^+$.  Various individual terms in the formulas for $f_{\oB}(\alpha)$ and $g_{\oB}(\alpha)$ are discontinuous at points $\alpha=\frac{1}{k}$ for $k \ge 1$.
The function $f_{\oB}(\alpha)$ was shown to be continuous on $[0,1]$ in \cite{DL:22}; the function $g_{\oB}(\alpha)$ can be checked to be continuous. 
\end{rem}

%
%

\section{Estimates for the generalized partial factorization sums $\oA(n,x)$}\label{sec:asymp-oAnx}

The main goal of this section is to prove the following theorem.

%
%

\begin{thm}\label{thm:oAnx}
Let $\oA(n,x) = \sum_{b=2}^{\lfloor x\rfloor}\frac{2}{b-1} S_b(n) \log b.$ 
Then for all integers $n \ge 2$ and real $x\in\left[\sqrt{n},n\right]$, 
\begin{equation}\label{eqn:oAnx-bound}
\oA(n,x) =  \oA_0(n,x) n^2 \log n + \oA_1(n, x) n^2 + O \left(n^{3/2} \log n \right),
\end{equation}
where the functions $\oA_0(n,x)$ and $\oA_1(n,x)$ only depend on $\frac{x}{n}$ and are given by
\begin{equation}\label{eqn:oA0-thm}
\oA_0(n, x) := \left(\frac{3}{2} - \gamma\right) + \left(H_{\lfloor \frac{n}{x} \rfloor} - \log \frac{n}{x} \right) 
+\frac{1}{2} \left(\frac{x}{n}\right)^2 \left\lfloor \frac{n}{x} \right\rfloor \left\lfloor \frac{n}{x} + 1\right\rfloor - 2 \left(\frac{x}{n}\right) \left\lfloor \frac{n}{x} \right\rfloor
\end{equation}
and
\begin{align}\label{eqn:oA1-thm} 
\oA_1(n, x) := &\left(\frac{3}{2} \gamma + \gamma_1 -\frac{7}{4}\right)- \frac{3}{2} \left(H_{\lfloor \frac{n}{x} \rfloor} - \log \frac{n}{x} \right) - \left( J_{\lfloor \frac{n}{x} \rfloor}  - \frac{1}{2} \left(\log \frac{n}{x}\right)^2 \right)  - \frac{3}{2} \log \frac{n}{x}  \nonumber   \\
&  - \frac{1}{2} \left(\log \frac{n}{x}\right) \left(\frac{x}{n}\right)^2 \left\lfloor \frac{n}{x} \right\rfloor \left\lfloor \frac{n}{x} +1\right\rfloor + 2 \left(\log \frac{n}{x}\right)  \frac{x}{n} \left\lfloor \frac{n}{x} \right\rfloor\nonumber\\
&-\frac{1}{4}  \left(\frac{x}{n}\right)^2 \left\lfloor \frac{n}{x} \right\rfloor \left\lfloor \frac{n}{x}+1 \right\rfloor +
2\left(\frac{x}{n}\right)  \left\lfloor \frac{n}{x} \right\rfloor.
\end{align}
Moreover, for all integers $n\ge2$ and real $x\in\left[1,\sqrt{n}\right]$,
\begin{equation}\label{eqn:oAnx-bound2}
\oA(n,x)=O\left(n^{3/2}\log n\right).
\end{equation}
\end{thm}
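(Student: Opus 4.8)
The plan is to obtain $\oA(n,x)$ from the known value $\oA(n)=\oA(n,n)$ of Theorem~\ref{thm:oAn} by removing the complementary tail
$$\oA^{c}(n,x):=\sum_{x<b\le n}\frac{2}{b-1}S_{b}(n)\log b ,\qquad \oA(n,x)=\oA(n)-\oA^{c}(n,x).$$
In the range $\sqrt n\le x\le n$ every base $b$ occurring in $\oA^{c}(n,x)$ satisfies $b>\sqrt n$, so each integer $1\le j\le n-1$ has at most two base-$b$ digits; then \eqref{eqn:dbn-formula} and \eqref{eqn:Sbn-formula} collapse to $S_{b}(n)=\tfrac12 n(n-1)-(b-1)\sum_{j=1}^{n-1}\lfloor j/b\rfloor$, and an elementary count gives $\sum_{j=1}^{n-1}\lfloor j/b\rfloor=n\lfloor n/b\rfloor-\tfrac12 b\lfloor n/b\rfloor(\lfloor n/b\rfloor+1)$. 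Substituting this, and recognising $n(n-1)\sum_{x<b\le n}\frac{\log b}{b-1}=\oB_{11}^{c}(n,x)$ from \eqref{eqn:B11c} and $\sum_{x<b\le n}\lfloor n/b\rfloor\log b=\oC(n,n)-\oC(n,x)$, is the recursion formula
$$\oA(n,x)=\oA(n)-\oB_{11}^{c}(n,x)+2n\bigl(\oC(n,n)-\oC(n,x)\bigr)-D(n,x),\qquad D(n,x):=\sum_{x<b\le n}\Bigl\lfloor\tfrac nb\Bigr\rfloor\Bigl(\Bigl\lfloor\tfrac nb\Bigr\rfloor+1\Bigr)b\log b.$$
First I would dispose of the three known pieces for $\sqrt n\le x\le n$: $\oA(n)$ by Theorem~\ref{thm:oAn}; $\oB_{11}^{c}(n,x)$ by Lemma~\ref{lem:52}, whose error $O(n^{2}\log n/x)$ is $O(n^{3/2}\log n)$ in this range; and $\oC(n,n)-\oC(n,x)$ by Proposition~\ref{prop:23n}(2) with the exact quadrature \eqref{eqn:oCn-integral}, which is precisely where $H_{\lfloor n/x\rfloor}$ and the Stieltjes-type sum $J_{\lfloor n/x\rfloor}$ enter.

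The core of the argument is the estimation of the new sum $D(n,x)$, which is the source of the extra piecewise-quadratic term $\tfrac12(x/n)^{2}\lfloor n/x\rfloor\lfloor n/x+1\rfloor$ in $\oA_{0}$. I would split $(x,n]$ into the blocks $b\in(\max(x,n/(j+1)),\,n/j]$ on which $\lfloor n/b\rfloor=j$, for $j=1,\dots,\lfloor n/x\rfloor$, write each inner sum $\sum b\log b$ as a difference of the partial sums $L_{1}(\lfloor n/j\rfloor)$ of Lemma~\ref{lem:Ln01-estimate}, and perform an Abel summation \emph{before} inserting any asymptotics. The weights $j(j+1)$ telescope and the double sum collapses to
$$D(n,x)=2\sum_{j=1}^{\lfloor n/x\rfloor}j\,L_{1}\!\bigl(\lfloor n/j\rfloor\bigr)-\lfloor n/x\rfloor\bigl(\lfloor n/x\rfloor+1\bigr)L_{1}\!\bigl(\lfloor x\rfloor\bigr).$$
Only now would I substitute $L_{1}(m)=\tfrac12 m^{2}\log m-\tfrac14 m^{2}+O(m\log m)$: because the telescoping was done first, the aggregated error is only $O(\lfloor n/x\rfloor\cdot n\log n)=O(n^{3/2}\log n)$ for $x\ge\sqrt n$, whereas a block-by-block estimate would contribute an error of size $n^{2}\log n$. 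The remaining sums evaluate, up to $O(n^{3/2}\log n)$, to $n^{2}\log n\,H_{\lfloor n/x\rfloor}-n^{2}J_{\lfloor n/x\rfloor}-\tfrac12 n^{2}H_{\lfloor n/x\rfloor}$ together with the boundary term $\lfloor n/x\rfloor(\lfloor n/x\rfloor+1)\bigl(\tfrac12 x^{2}\log x-\tfrac14 x^{2}\bigr)$, whose own error $O(\lfloor n/x\rfloor^{2}x\log x)$ is again $O(n^{3/2}\log n)$ in this range.

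Finally I would assemble the four estimates, expand $\log x=\log n-\log(n/x)$ and $\lfloor n/x+1\rfloor=\lfloor n/x\rfloor+1$ throughout, and collect powers of $n$: after the $H_{\lfloor n/x\rfloor}$ contributions from $2n(\oC(n,n)-\oC(n,x))$ and from $-D(n,x)$ regroup, the coefficient of $n^{2}\log n$ becomes $\oA_{0}(n,x)$ and the coefficient of $n^{2}$ becomes $\oA_{1}(n,x)$, each a function of $x/n$ alone, with total remainder $O(n^{3/2}\log n)$; this gives \eqref{eqn:oAnx-bound}. For $1\le x\le\sqrt n$ the bound \eqref{eqn:oAnx-bound2} is immediate from Lemma~\ref{lem:dbn-Sbn-bound}: $0\le\oA(n,x)=\sum_{2\le b\le x}\frac{2}{b-1}S_{b}(n)\log b\le\sum_{2\le b\le x}n\log n\le \sqrt n\,n\log n$. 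The main obstacle is the treatment of $D(n,x)$ — arranging the telescoping so that the error stays within the power-saving bound — together with the lengthy but routine bookkeeping required to check that the assembled main terms reduce exactly to $\oA_{0}(n,x)$ and $\oA_{1}(n,x)$.
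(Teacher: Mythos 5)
Your proposal is correct, and it follows a genuinely different route from the paper's. The paper (Lemmas 6.2--6.7) interchanges the order of summation in the tail $\sum_{x<b\le n}\frac{2}{b-1}S_b(n)\log b$ to produce $\oB^c_{11}(n,x)-2\sum_{x\le j<n}\bigl(\oC(j,j)-\oC(j,x)\bigr)$, approximates each $\oC(j,j)-\oC(j,x)$ by the integral of Proposition~\ref{prop:23n}(2), then pushes the $j$-sum inside (using the dilated-floor estimate of Lemma~\ref{lem:kernel-estimate}) to arrive at $\int_x^n\bigl(\lfloor n/u\rfloor+\{n/u\}^2\bigr)u\log u\,du$, and finally evaluates this integral via an Euler--Maclaurin identity (Lemmas~\ref{lem:integral-calculation-EM}, \ref{lem:sec6-integral-eval2}). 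You instead keep $b$ as the outer variable and apply the exact closed form $\sum_{j=1}^{n-1}\lfloor j/b\rfloor=n\lfloor n/b\rfloor-\tfrac12 b\lfloor n/b\rfloor(\lfloor n/b\rfloor+1)$ (which you correctly observe is an identity, not merely an $O(n/b)$-approximation), producing the three familiar pieces plus the new sum $D(n,x)=\sum_{x<b\le n}\lfloor n/b\rfloor(\lfloor n/b\rfloor+1)b\log b$; you then telescope $D$ by Abel summation into $2\sum_{j\le\lfloor n/x\rfloor}j\,L_1(\lfloor n/j\rfloor)-\lfloor n/x\rfloor(\lfloor n/x\rfloor+1)L_1(\lfloor x\rfloor)$ and only then insert the $L_1$ asymptotics of Lemma~\ref{lem:Ln01-estimate}. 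The key technical point you correctly identify --- that naive block-by-block substitution of $L_1$ in the pre-telescoped form would lose the power saving, giving an error of order $n^2\log n$ rather than $n^{3/2}\log n$ --- plays the same role that passing to the integral does in the paper. Your route avoids the Euler--Maclaurin machinery of Lemmas~\ref{lem:integral-calculation-EM}--\ref{lem:sec6-integral-eval2} entirely and relies only on the elementary $H$, $J$, $L_1$ estimates of Section~\ref{sec:2}, at the cost of a somewhat heavier final bookkeeping step (and it produces the $\lfloor n/x\rfloor\lfloor n/x+1\rfloor$ form of $\oA_0,\oA_1$ directly, without the $\{n/x\}\mapsto\frac nx-\lfloor n/x\rfloor$ conversion the paper performs at the end). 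The paper's integral kernel $\lfloor n/u\rfloor+\{n/u\}^2$ is conceptually cleaner and reusable, but both arguments are sound; your treatment of the range $x\in[1,\sqrt n]$ coincides with the paper's.
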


We derive estimates for $\oA(n,x)$ starting from $\oA(n,n)$ and working downward, via a recursion in Lemma \ref{lem:oA-diff-formula} below.

%
%

\subsection{Estimates for the complement sum $\oA(n,n)-\oA(n,x)$}

First, we show that $\oA(n,n)-\oA(n,x)$ can be written in terms of known quantities, namely $\oB^c_{11}(n,x)$ and $\oC(j,j)-\oC(j,x)$.

%
%

\begin{lem}\label{lem:oA-diff-formula}
For all integers $n\ge2$ and real numbers $x$ such that $\sqrt{n-1}\le x\le n$, we have
\begin{equation}\label{eqn:oAnn-oAnx-formula}
\oA(n,n)-\oA(n,x)=\oB^c_{11}(n,x)-2\sum_{x\le j<n}\left(\oC(j,j)-\oC(j,x)\right),
\end{equation}
where $\oC(n,x)$ and $\oB^c_{11}(n,x)$ are given by \eqref{eqn:oC-defn} and \eqref{eqn:B11c}, respectively.
\end{lem}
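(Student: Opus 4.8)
## Proof proposal for Lemma \ref{lem:oA-diff-formula}

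The plan is to expand the difference $\oA(n,n)-\oA(n,x)$ directly from the definition $\oA(n,x) = \sum_{b=2}^{\lfloor x\rfloor}\frac{2}{b-1}S_b(n)\log b$, producing the sum over the range $x < b \le n$, and then to rewrite $S_b(n)$ for $b$ in this range using the two-digit structure already established in Lemma \ref{lem:diff1} and the identity \eqref{eqn:dbn-formula}. First I would write
$$
\oA(n,n)-\oA(n,x) = \sum_{x < b \le n} \frac{2}{b-1} S_b(n) \log b = 2\sum_{x < b \le n} \frac{\log b}{b-1}\sum_{j=1}^{n-1} d_b(j).
$$
For each $b$ with $b > x \ge \sqrt{n-1}$ and each $j < n$ one has $b^2 > n-1 \ge j$, so $\lfloor j/b^i\rfloor = 0$ for $i \ge 2$, and hence by \eqref{eqn:dbn-formula}, $d_b(j) = j - (b-1)\lfloor j/b\rfloor$. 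Summing over $1 \le j \le n-1$ and using $\sum_{j=1}^{n-1} j = \binom{n}{2}$ should give $S_b(n) = \binom{n}{2} - (b-1)\sum_{j=1}^{n-1}\lfloor j/b\rfloor$.

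The next step is to manipulate $\sum_{j=1}^{n-1}\lfloor j/b\rfloor$ into a form involving the $\oC$ sums. I would interchange the order of summation: $\sum_{j=1}^{n-1}\lfloor j/b\rfloor = \sum_{j=1}^{n-1}\#\{i \ge 1 : ib \le j\}$, but it is cleaner to observe $\sum_{j=0}^{n-1}\lfloor j/b\rfloor = \sum_{j=1}^{n-1}\lfloor j/b\rfloor$ and relate this to a double sum over $x \le j < n$. Plugging the expression for $S_b(n)$ back in, the term $\binom{n}{2}\sum_{x<b\le n}\frac{2\log b}{b-1}$ is (up to the factor convention) exactly $\frac{1}{n}\oB^c_{11}(n,x) \cdot$ — more precisely $n(n-1)\sum_{x<b\le n}\frac{\log b}{b-1} = \oB^c_{11}(n,x)$ by \eqref{eqn:B11c}, and $2\binom{n}{2} = n(n-1)$, so this piece is precisely $\oB^c_{11}(n,x)$. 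The remaining piece is $-2\sum_{x<b\le n}\left(\sum_{j=1}^{n-1}\lfloor j/b\rfloor\right)\log b$, and the main task is to reorganize this into $-2\sum_{x \le j < n}\left(\oC(j,j) - \oC(j,x)\right)$.

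For that reorganization I would swap the order of the outer $b$-sum and the inner $j$-sum, writing $\sum_{x<b\le n}\sum_{j=1}^{n-1}\lfloor j/b\rfloor \log b = \sum_{j=1}^{n-1}\sum_{x<b\le n}\lfloor j/b\rfloor \log b$, then note that $\lfloor j/b\rfloor = 0$ once $b > j$, so the inner sum truncates to $\sum_{x < b \le j}\lfloor j/b\rfloor\log b = \oC(j,j) - \oC(j,x)$ whenever $j \ge x$ (and is empty when $j < x$, using here that $\lfloor j/b \rfloor = 0$ for $b > j \ge x$ is automatic, and that $\oC(j,j)-\oC(j,x)=0$ for $j \le x$ by the definition of $\oC$ as a sum over $b \le x$). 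This collapses the double sum to $\sum_{x \le j < n}\left(\oC(j,j)-\oC(j,x)\right)$, which gives the claimed formula. The main obstacle — really the only delicate point — is bookkeeping the truncation ranges and boundary cases correctly: making sure the hypothesis $x \ge \sqrt{n-1}$ is exactly what licenses dropping the higher-order digits for all relevant $b$ and $j$, and checking that the terms with $b \le x$ or $j < x$ vanish consistently so that the outer index genuinely runs over $x \le j < n$ and the inner $\oC$-difference is the full quantity $\oC(j,j)-\oC(j,x)$. I expect no analytic difficulty here; it is a purely combinatorial rearrangement, and the factor-of-$2$ and the $(b-1)$ versus $b$ denominators are the things to watch.
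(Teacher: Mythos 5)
Your argument is correct and follows essentially the same route as the paper's proof: you reduce $S_b(n)$ to $\frac{n(n-1)}{2}-(b-1)\sum_{j<n}\lfloor j/b\rfloor$ for $b>x\ge\sqrt{n-1}$ (the paper quotes \eqref{eqn:Sbn-formula} directly while you re-derive it from \eqref{eqn:dbn-formula}, but this is the same computation), split off the $\oB^c_{11}$ piece, and reorganize the remaining double sum via interchange of summation and the stabilization property \eqref{eqn:oCstab} of $\oC$. The bookkeeping you flag (truncating the $j$-range to $j\ge x$, collapsing $\oC(j,n)$ to $\oC(j,j)$) is exactly what the paper does.
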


\begin{proof}
From \eqref{eqn:oA-function}, we have
\begin{equation}\label{eqn:oAnn-oAnx-start}
\oA(n,n) =\oA(n,x) + \sum_{x<b\le n}\frac{2}{b-1}S_b(n)\log b  
\end{equation}
Observe that for positive integers $b>x$ and $j\le n-1$, we have $b^2>x^2\ge n-1\ge j$, and hence $\left\lfloor \frac{j}{b^i}\right\rfloor=0$ for all $i\ge2$. From \eqref{eqn:Sbn-formula}, if $b>x$, then
$$
S_b(n)=\frac{n(n-1)}{2}-(b-1)\sum_{j=1}^{n-1}\left\lfloor\frac{j}{b}\right\rfloor.
$$
On inserting this into \eqref{eqn:oAnn-oAnx-start}, we obtain
\begin{align*}
\oA(n,n)-\oA(n,x)&=n(n-1)\sum_{x<b\le n}\frac{\log b}{b-1}-2\sum_{j=1}^{n-1}\sum_{x<b\le n}\left\lfloor\frac{j}{b}\right\rfloor\log b\\
&=\oB^c_{11}(n,x)-2\sum_{j=1}^{n-1}\left(\oC(j,n)-\oC(j,x)\right).
\end{align*}
From \eqref{eqn:oCstab}, if $1\le j<n$, then $\oC(j,n)=\oC(j,j)$. Hence
$$
\oA(n,n)-\oA(n,x)=\oB^c_{11}(n,x)-2\sum_{1\le j<n}\left(\oC(j,j)-\oC(j,x)\right).
$$
From \eqref{eqn:oCstab}, if $1\le j<x$, then $\oC(j,x)=\oC(j,j)$. Hence \eqref{eqn:oAnn-oAnx-formula} follows.
\end{proof}

The next lemma gives an estimate for the sum of values of a dilated floor function. We will use this estimate to prove the main Lemma \ref{lem:oAnn-oAnx-estimate} below.

%
%

\begin{lem}\label{lem:kernel-estimate}
For all real numbers $t$ and $u$ such that $1\le u\le t$, we have
$$
\sum_{j=1}^{\lfloor t\rfloor}\left\lfloor\frac{j}{u}\right\rfloor=t\left\lfloor\frac{t}{u}\right\rfloor-\frac{1}{2}u\left\lfloor\frac{t}{u}\right\rfloor^2-\frac{1}{2}u\left\lfloor\frac{t}{u}\right\rfloor+O\left(\frac{t}{u}\right).
$$
\end{lem}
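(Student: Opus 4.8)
The plan is to interchange the order of summation and then account for the (at most unit) discrepancy between $\left\lfloor \lfloor t\rfloor/u\right\rfloor$ and $\lfloor t/u\rfloor$. Write $N := \lfloor t\rfloor$ and $M := \lfloor t/u\rfloor$, and note that the left-hand side depends only on $N$. First I would use $\lfloor j/u\rfloor = \#\{k\ge 1 : ku\le j\}$ and swap the two sums:
\[
\sum_{j=1}^{N}\left\lfloor\frac{j}{u}\right\rfloor
=\sum_{k\ge 1}\#\{\,j : \lceil ku\rceil \le j\le N\,\}
=\sum_{k=1}^{M'}\left(N+1-\lceil ku\rceil\right),
\]
where $M' := \lfloor N/u\rfloor$, since $\lceil ku\rceil\le N$ is equivalent to $ku\le N$, i.e. $k\le M'$. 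Writing $\lceil ku\rceil = ku + O(1)$ and summing the resulting arithmetic progression gives
\[
\sum_{j=1}^{N}\left\lfloor\frac{j}{u}\right\rfloor
=(N+1)M' - \frac{u}{2}M'(M'+1) + O(M')
=(N+1)M' - \frac{u}{2}M'(M'+1) + O\!\left(\frac{t}{u}\right),
\]
the last step because $M'\le t/u$.

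Next I would compare $M'$ with $M$. Since $u\ge 1$ and $t-1 < N\le t$, we have $(M-1)u \le t-u\le t-1 < N \le t$, so $M-1\le M'\le M$; thus $M'\in\{M-1,M\}$. In the case $M'=M$, substituting and using $N = t+O(1)$ together with $M = O(t/u)$ yields exactly
\[
\sum_{j=1}^{N}\left\lfloor\frac{j}{u}\right\rfloor
= tM - \frac{u}{2}M^2 - \frac{u}{2}M + O\!\left(\frac{t}{u}\right),
\]
which is the claim. In the case $M'=M-1$ one necessarily has $Mu>N$ (otherwise $M'\ge M$), while $Mu\le t<N+1$; hence $Mu\in(N,N+1)$. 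A short computation shows that substituting $M'=M-1$ into $(N+1)M' - \frac{u}{2}M'(M'+1)$ produces $tM-\frac{u}{2}M^2-\frac{u}{2}M$ plus the correction $M(N+1-t)+(Mu-N-1)$; since $N+1-t\in(0,1]$ and $Mu-N-1\in(-1,0)$, this correction is $O(M)+O(1)=O(t/u)$, and the claim follows again.

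The only real subtlety is the case split in the previous paragraph: $\left\lfloor\lfloor t\rfloor/u\right\rfloor$ can fall one below $\lfloor t/u\rfloor$, and the point is that this happens exactly when $u\lfloor t/u\rfloor$ lies in the length-one interval $(\lfloor t\rfloor, t]$, which is precisely what makes the correction term negligible. Everything else is routine manipulation of floor and ceiling functions and of a finite arithmetic progression.
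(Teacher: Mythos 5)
Your proof is correct and follows essentially the same strategy as the paper: interchange the order of summation, estimate the ceiling via $\lceil v\rceil = v + O(1)$, and sum the resulting arithmetic progression. The only difference is your case split on $M'\in\{M-1,M\}$; the paper sidesteps it by running $k$ up to $\lfloor t/u\rfloor$ directly and observing that the count $\lfloor t\rfloor-\lceil uk\rceil+1$ automatically equals $0$ for the possible extra index, which makes the boundary correction disappear without a case analysis.
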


\begin{proof} We write $\left\lfloor\frac{j}{u}\right\rfloor=\sum_{1\le k\le\frac{j}{u}}1$ and interchange the order of summation, obtaining
$$
\sum_{j=1}^{\lfloor t\rfloor}\left\lfloor\frac{j}{u}\right\rfloor
=\sum_{1\le j\le t}\bigg( \sum_{1\le k\le\frac{j}{u}}1\bigg) =\sum_{1\le k\le\frac{t}{u}}\bigg( \sum_{uk\le j\le t}1\bigg).
$$
The inner sum on the right counts the number of integers from $\lceil uk\rceil$ to $\lfloor t\rfloor$. Hence the above is
$$
\sum_{j=1}^{\lfloor t\rfloor}\left\lfloor\frac{j}{u}\right\rfloor=\sum_{1\le k\le\frac{t}{u}}\left(\lfloor t\rfloor-\lceil uk\rceil+1\right)=(\lfloor t\rfloor+1)\left\lfloor\frac{t}{u}\right\rfloor-\sum_{k=1}^{\left\lfloor\frac{t}{u}\right\rfloor}\lceil uk\rceil.
$$
By using the estimate $\lceil v\rceil=v+O(1)$, we obtain
\begin{align*}
\sum_{j=1}^{\lfloor t\rfloor}\left\lfloor\frac{j}{u}\right\rfloor&=(\lfloor t\rfloor+1)\left\lfloor\frac{t}{u}\right\rfloor-\sum_{k=1}^{\left\lfloor\frac{t}{u}\right\rfloor}uk+O\left(\frac{t}{u}\right)\\
&=t\left\lfloor\frac{t}{u}\right\rfloor-\frac{1}{2}u\left\lfloor\frac{t}{u}\right\rfloor^2-\frac{1}{2}u\left\lfloor\frac{t}{u}\right\rfloor+O\left(\frac{t}{u}\right)
\end{align*}
as desired.
\end{proof}

The following lemma gives an estimate for the complement sum $\oA(n,n)-\oA(n,x)$.

%
%

\begin{lem}\label{lem:oAnn-oAnx-estimate}
For all integers $n\ge2$ and real numbers $x$ such that $\sqrt{n-1}\le x\le n$, we have
\begin{equation}\label{eqn:oAnn-oAnx-asymp}
\oA(n,n)-\oA(n,x)=\int_x^n\left(\left\lfloor\frac{n}{u}\right\rfloor+\left\{\frac{n}{u}\right\}^2\right)u\log u\,du+O\left(\frac{n^2\log n}{x}\right).
\end{equation}
\end{lem}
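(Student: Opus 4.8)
The plan is to run the recursion of Lemma~\ref{lem:oA-diff-formula}, which for $\sqrt{n-1}\le x\le n$ reads
\[
\oA(n,n)-\oA(n,x)=\oB_{11}^c(n,x)-2\sum_{x\le j<n}\bigl(\oC(j,j)-\oC(j,x)\bigr),
\]
and to show that, modulo an error $O(n^2\log n/x)$, the right side collapses to the asserted integral. First I would write $\oC(j,j)-\oC(j,x)=\sum_{x<b\le j}\lfloor j/b\rfloor\log b$ and interchange the summations, using $\lfloor j/b\rfloor=0$ for $j<b$, to obtain
\[
\sum_{x\le j<n}\bigl(\oC(j,j)-\oC(j,x)\bigr)=\sum_{x<b\le n-1}(\log b)\sum_{j=1}^{n-1}\left\lfloor\frac jb\right\rfloor .
\]
The inner sum is evaluated by Lemma~\ref{lem:kernel-estimate} (with $t=n$, then removing the term $\lfloor n/b\rfloor=O(n/b)$) together with the elementary identity $n\lfloor n/b\rfloor-\tfrac12 b\lfloor n/b\rfloor^2=\tfrac{n^2}{2b}-\tfrac b2\{n/b\}^2$, which gives
\[
\sum_{j=1}^{n-1}\left\lfloor\frac jb\right\rfloor=\frac{n^2}{2b}-\frac b2\left(\left\lfloor\frac nb\right\rfloor+\left\{\frac nb\right\}^2\right)+O\!\left(\frac nb\right);
\]
the $O$-term, weighted by $\log b$ and summed over $b\in(x,n-1]$, contributes only $O\bigl(n\sum_{x<b\le n}\tfrac{\log b}{b}\bigr)=O(n^2\log n/x)$, using $\tfrac{\log b}{b}\le\tfrac{\log n}{x}$ on $(x,n]$ and that there are at most $n-x$ such $b$.

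Next I would dispose of the ``$n^2/(2b)$'' contribution: weighted by $\log b$ it equals $n^2\bigl(J(n-1)-J(x)\bigr)$, which by Lemma~\ref{lem:27} is $\tfrac{n^2}{2}\bigl((\log n)^2-(\log x)^2\bigr)+O(n^2\log n/x)$, and by Lemma~\ref{lem:52} this is exactly $\oB_{11}^c(n,x)+O(n^2\log n/x)$. Thus the $\oB_{11}^c$ term cancels the $n^2/(2b)$ term, leaving
\[
\oA(n,n)-\oA(n,x)=\sum_{x<b\le n-1}b\left(\left\lfloor\frac nb\right\rfloor+\left\{\frac nb\right\}^2\right)\log b+O\!\left(\frac{n^2\log n}{x}\right).
\]

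It then remains to replace this sum by $\int_x^n\bigl(\lfloor n/u\rfloor+\{n/u\}^2\bigr)u\log u\,du$. The key observation is that, although $u\mapsto\lfloor n/u\rfloor$ and $u\mapsto\{n/u\}^2$ each jump at the points $u=n/q$, the function $h(y):=\lfloor y\rfloor+\{y\}^2$ is \emph{continuous} and nondecreasing on $[0,\infty)$ (on $[q,q+1]$ it equals $q+(y-q)^2$, and successive pieces agree at the integers), with $y-1\le h(y)\le y$; hence $g(u):=h(n/u)\,u\log u$ is continuous on $[x,n]$ and piecewise $C^1$. A standard bounded-variation estimate then yields $\bigl|\sum_{x<b\le n-1}g(b)-\int_x^n g(u)\,du\bigr|\ll V(g;[x,n])+n\log n$. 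Writing $g(u)=n\log u-r(u)$ with $0\le r(u)=\bigl(\{n/u\}-\{n/u\}^2\bigr)u\log u\le\tfrac14 u\log u$, the increasing part has $V(n\log u;[x,n])=n\log(n/x)\ll n^2\log n/x$, while $r$ vanishes at every $u=n/q$, so its variation on each of the pieces (the intervals $[n/(q+1),\,n/q]$ and the leftmost piece down to $x$) is at most $2\sup r\le\tfrac12(n/q)\log n$; summing over $1\le q\le\lfloor n/x\rfloor$ gives $V(r;[x,n])\le\tfrac12 n\log n\cdot H_{\lfloor n/x\rfloor}\le\tfrac{n^2\log n}{2x}$. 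Altogether $V(g;[x,n])\ll n^2\log n/x$, which completes the proof.

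The main obstacle is this last sum-to-integral comparison: the integrand is not monotone, so one cannot sandwich the sum directly; the resolution is to recognize the hidden continuity of $\lfloor y\rfloor+\{y\}^2$ and to peel off the term $n\log u$, which carries essentially all of the size but has negligible total variation, from the oscillatory remainder $r$, whose variation is controlled because $r$ dies at each breakpoint $u=n/q$. A secondary but pervasive point is that several naive error bounds (such as $O(n(\log n)^2)$) must be sharpened to $O(n^2\log n/x)$ by exploiting that the $b$-sums run over at most $\min(n-x,\,n)$ terms, or telescope; this is where the hypothesis $x\ge\sqrt{n-1}$ enters.
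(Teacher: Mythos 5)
Your argument is correct, and it reaches the integral by a genuinely different route. After invoking Lemma~\ref{lem:oA-diff-formula} (as the paper does), you interchange the double sum so that the $j$-sum is inside, evaluate $\sum_{j=1}^{n-1}\lfloor j/b\rfloor$ by Lemma~\ref{lem:kernel-estimate}, cancel the resulting $n^2/(2b)$ contribution against $\oB_{11}^c(n,x)$ via Lemmas~\ref{lem:52} and~\ref{lem:27}, and are left with the \emph{discrete} sum $\sum_{x<b\le n-1} b\bigl(\lfloor n/b\rfloor+\{n/b\}^2\bigr)\log b$, which you then pass to the integral at the very end by an Euler--Maclaurin/bounded-variation comparison, hinging on the observation that $y\mapsto\lfloor y\rfloor+\{y\}^2$ is continuous. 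The paper instead brings the integral in at the start: it applies Proposition~\ref{prop:23n}(2) to replace each $\oC(j,j)-\oC(j,x)$ by $\int_x^j\lfloor j/u\rfloor\log u\,du+O(j\log j/x)$, swaps the sum with the integral, applies Lemma~\ref{lem:kernel-estimate} to the inner sum $\sum_{x\le j<n}\lfloor j/u\rfloor$ inside the integrand, and the kernel $\lfloor n/u\rfloor+\{n/u\}^2$ then emerges by pure algebra, with no final sum-to-integral step required. The paper's version is shorter because it reuses the already-proved integral estimate in Proposition~\ref{prop:23n}(2); yours trades that for a self-contained sum-to-integral argument, at the cost of the extra continuity observation and the variation control of $r(u)=(\{n/u\}-\{n/u\}^2)\,u\log u$.

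One caveat on that last step: your claim that $V(r;[n/(q+1),n/q])\le 2\sup r$ amounts to asserting that $r$ is unimodal on each block, which is not obvious, since $r$ is the product of the unimodal factor $\{n/u\}(1-\{n/u\})$ with the \emph{increasing} factor $u\log u$. The safe repair is the variation product rule $V(\phi\psi)\le\|\phi\|_\infty V(\psi)+\|\psi\|_\infty V(\phi)$: on the $q$-th block this gives $V(r)\ll \frac{n\log n}{q^2}+\frac{n\log n}{q}$, so $V(r;[x,n])\ll n\log n\,H_{\lfloor n/x\rfloor}\ll n^2\log n/x$, recovering your bound. With that patch the argument is complete.
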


\begin{proof}
We start from \eqref{eqn:oAnn-oAnx-formula} and apply Proposition \ref{prop:23n} to estimate each 
$\left(\oC(j,j)-\oC(j,x)\right)$:
\begin{align}\label{eqn:oAnn-oAnx-start2}
\oA(n,n)-\oA(n,x)&=\oB^c_{11}(n,x)-2\sum_{x\le j<n}\left(\oC(j,j)-\oC(j,x)\right)\nonumber\\
&=\oB^c_{11}(n,x)-2\sum_{x\le j<n}\int_x^j\left\lfloor\frac{j}{u}\right\rfloor\log u\,du+O\bigg(\frac{1}{x}\sum_{j=1}^nj\log j\bigg).
\end{align}
Now, we estimate each term on the right of \eqref{eqn:oAnn-oAnx-start}. 
To simplify the error term, by Lemma \ref{lem:Ln01-estimate}, we have
\begin{equation}\label{eqn:oAnn-oAnx-error}
\frac{1}{x}\sum_{j=1}^nj\log j=O\left(\frac{n^2\log n}{x}\right).
\end{equation}
The first term can be estimated by Lemma \ref{lem:52}:
\begin{align}\label{eqn:oAnn-oAnx-first}
\oB^c_{11}(n,x)&=\frac{1}{2}n^2(\log n)^2-\frac{1}{2}n^2(\log x)^2+O\left(\frac{n^2\log n}{x}\right)\nonumber\\
&=\int_x^n\frac{n^2\log u}{u}\,du++O\left(\frac{n^2\log n}{x}\right).
\end{align}
For the second term, we observe that $\left\lfloor\frac{j}{u}\right\rfloor=0$ for $0<j<u$. Hence
\begin{align}\label{eqn:oAnn-oAnx-second-1}
-2\sum_{x\le j<n}\int_x^j\left\lfloor\frac{j}{u}\right\rfloor\log u\,du&=-2\sum_{x\le j<n}\int_x^n\left\lfloor\frac{j}{u}\right\rfloor\log u\,du\nonumber\\
&=-2\int_x^n\bigg(\sum_{x\le j<n}\left\lfloor\frac{j}{u}\right\rfloor\bigg)\log u\,du.
\end{align}
The inner sum on the right of \eqref{eqn:oAnn-oAnx-second-1} can be estimated using Lemma \ref{lem:kernel-estimate}. If  $1\le j<x$ and $u \ge x$, then $0<j<u$, and then $\left\lfloor\frac{j}{u}\right\rfloor=0$. Hence $\sum_{1\le j<x}\left\lfloor\frac{j}{u}\right\rfloor=0$ and
\begin{align*}
\sum_{x\le j<n}\left\lfloor\frac{j}{u}\right\rfloor&=\sum_{j=1}^n\left\lfloor\frac{j}{u}\right\rfloor-\left\lfloor\frac{n}{u}\right\rfloor\\
&=n\left\lfloor\frac{n}{u}\right\rfloor-\frac{1}{2}u\left\lfloor\frac{n}{u}\right\rfloor^2-\frac{1}{2}u\left\lfloor\frac{n}{u}\right\rfloor+O\left(\frac{n}{u}\right)\\
&=n\left\lfloor\frac{n}{u}\right\rfloor-\frac{1}{2}u\left\lfloor\frac{n}{u}\right\rfloor^2-\frac{1}{2}u\left\lfloor\frac{n}{u}\right\rfloor+O\left(\frac{n}{x}\right).
\end{align*}
On inserting this into \eqref{eqn:oAnn-oAnx-second-1}, we obtain
\begin{equation}\label{eqn:oAnn-oAnx-second-2}
-2\sum_{x\le j<n}\int_x^j\left\lfloor\frac{j}{u}\right\rfloor\log u\,du=\int_x^n\left(-2n\left\lfloor\frac{n}{u}\right\rfloor+u\left\lfloor\frac{n}{u}\right\rfloor^2+u\left\lfloor\frac{n}{u}\right\rfloor\right)\log u\,du+O\left(\frac{n^2\log n}{x}\right).
\end{equation}
On inserting \eqref{eqn:oAnn-oAnx-error}, \eqref{eqn:oAnn-oAnx-first}, and \eqref{eqn:oAnn-oAnx-second-2} into \eqref{eqn:oAnn-oAnx-start2}, we obtain
\begin{align*}
\oA(n,n)-\oA(n,x)&=\int_x^n\left(\frac{n^2}{u}-2n\left\lfloor\frac{n}{u}\right\rfloor+u\left\lfloor\frac{n}{u}\right\rfloor^2+u\left\lfloor\frac{n}{u}\right\rfloor\right)\log u\,du+O\left(\frac{n^2\log n}{x}\right)\\
&=\int_x^n\left(\left(\frac{n}{u}-\left\lfloor\frac{n}{u}\right\rfloor\right)^2+\left\lfloor\frac{n}{u}\right\rfloor\right)u\log u\,du+O\left(\frac{n^2\log n}{x}\right)\\
&=\int_x^n\left(\left\lfloor\frac{n}{u}\right\rfloor+\left\{\frac{n}{u}\right\}^2\right)u\log u\,du
+O\left(\frac{n^2\log n}{x}\right)
\end{align*}
as desired.
\end{proof}

The next lemma shows that the main term in \eqref{eqn:oAnn-oAnx-asymp} can be written in the form $f\left(\frac{x}{n}\right)n^2\log n+g\left(\frac{x}{n}\right)n^2$.

%
%

\begin{lem}\label{lem:sec6-integral-eval1}
For all real numbers $n$ and $x$ such that $0<x\le n$, we have
\begin{equation}\label{eqn:sec6-main-term-evaluation}
\int_x^n\left(\left\lfloor\frac{n}{u}\right\rfloor+\left\{\frac{n}{u}\right\}^2\right)u\log u\,du
=n^2(\log n)\int_1^\frac{n}{x}\frac{\lfloor v\rfloor+\{v\}^2}{v^3}\,dv-n^2\int_1^\frac{n}{x}\frac{\lfloor v\rfloor+\{v\}^2}{v^3}\log v\,dv.
\end{equation}
\end{lem}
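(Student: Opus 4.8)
The plan is to prove \eqref{eqn:sec6-main-term-evaluation} by the single substitution $v = \frac{n}{u}$, which is exactly the change of variables that converts a sum/integral over the base $u \in [x,n]$ into one over the ratio $v \in [1, \frac{n}{x}]$. First I would record the elementary consequences of this substitution: $u = \frac{n}{v}$, $du = -\frac{n}{v^2}\,dv$, the limits $u = x \leftrightarrow v = \frac{n}{x}$ and $u = n \leftrightarrow v = 1$, together with the identities $\left\lfloor \frac{n}{u} \right\rfloor = \lfloor v \rfloor$, $\left\{ \frac{n}{u} \right\} = \{ v \}$, and $\log u = \log \frac{n}{v} = \log n - \log v$.

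Substituting these into the left-hand integrand $\left(\left\lfloor \frac{n}{u}\right\rfloor + \left\{\frac{n}{u}\right\}^2\right) u \log u$, accounting for the Jacobian factor and the reversal of orientation (the two sign changes cancel), the integral becomes
$$
\int_x^n\left(\left\lfloor\frac{n}{u}\right\rfloor+\left\{\frac{n}{u}\right\}^2\right)u\log u\,du = \int_1^{\frac{n}{x}} \left(\lfloor v \rfloor + \{v\}^2\right)\frac{n^2}{v^3}\left(\log n - \log v\right)\,dv.
$$
Splitting the right side by linearity, pulling the constant $n^2 \log n$ out of the first piece and $-n^2$ out of the second, yields precisely the right-hand side of \eqref{eqn:sec6-main-term-evaluation}.

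The only point that deserves a word of care is the legitimacy of the change of variables, since the integrand involves the discontinuous functions $\lfloor \cdot \rfloor$ and $\{ \cdot \}$. This is harmless: on each interval $u \in \left(\frac{n}{k+1}, \frac{n}{k}\right]$ the function $\left\lfloor \frac{n}{u}\right\rfloor$ equals the constant $k$, so the integrand is continuous there and the substitution is valid on that piece; summing over the finitely many such pieces meeting $[x,n]$ (there are at most $\left\lfloor \frac{n}{x}\right\rfloor$ of them) gives the identity on all of $[x,n]$. Alternatively, one observes that the integrand is bounded and piecewise continuous, hence Riemann integrable, while $v \mapsto \frac{n}{v}$ is a $C^1$ diffeomorphism of $\left[1, \frac{n}{x}\right]$ onto $[x,n]$, so the standard change-of-variables theorem applies directly. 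There is no genuine obstacle; this lemma is purely a bookkeeping step that recasts the integral appearing in Lemma \ref{lem:oAnn-oAnx-estimate} into the scale-invariant form needed to extract the limit functions $f_{\oA}(\alpha)$ and $g_{\oA}(\alpha)$.
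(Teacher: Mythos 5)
Your proof is correct and takes essentially the same approach as the paper: the substitution $v = \frac{n}{u}$, followed by the expansion $\log\frac{n}{v} = \log n - \log v$ and a split by linearity. The additional paragraph you include justifying the change of variables for the piecewise-continuous integrand is sound, though the paper omits this remark.
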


\begin{proof}
By the substitution $v=\frac{n}{u}$, we see that
\begin{align*}
\int_x^n\left(\left\lfloor\frac{n}{u}\right\rfloor+\left\{\frac{n}{u}\right\}^2\right)u\log u\,du
&=\int_\frac{n}{x}^1\left(\lfloor v\rfloor+\{v\}^2\right)\frac{n}{v}\left(\log\frac{n}{v}\right)\left(-\frac{n}{v^2}\right)\,dv\\
&=n^2\int_1^\frac{n}{x}\frac{\lfloor v\rfloor+\{v\}^2}{v^3}\log\frac{n}{v}\,dv.
\end{align*}
Since $\log\frac{n}{v}=\log n-\log v$, we obtain
$$
\int_x^n\left(\left\lfloor\frac{n}{u}\right\rfloor+\left\{\frac{n}{u}\right\}^2\right)u\log u\,du=n^2(\log n)\int_1^\frac{n}{x}\frac{\lfloor v\rfloor+\{v\}^2}{v^3}\,dv-n^2\int_1^\frac{n}{x}\frac{\lfloor v\rfloor+\{v\}^2}{v^3}\log v\,dv
$$
as desired.
\end{proof}

To evaluate the integrals on the right of \eqref{eqn:sec6-main-term-evaluation}, we use the following lemma.

%
%

\begin{lem}\label{lem:integral-calculation-EM}
Suppose that $f$ is a twice differentiable function with continuous second derivative on the interval $[1,\infty)$. Then for all real numbers $\beta\ge1$,
\begin{equation}\label{eqn:integral-calculation-EM}
\frac{1}{2}\int_1^\beta\left(\lfloor v\rfloor+\{v\}^2\right)f''(v)\,dv=\int_1^\beta f(v)\,dv-\sum_{b=2}^{\lfloor\beta\rfloor}f(b)-\{\beta\}f(\beta)+\frac{1}{2}\left(\lfloor\beta\rfloor+\{\beta\}^2\right)f'(\beta)-\frac{1}{2}f'(1).
\end{equation}
\end{lem}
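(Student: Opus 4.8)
The plan is to obtain \eqref{eqn:integral-calculation-EM} from two elementary integrations by parts, the key observation being that the kernel $h(v):=\lfloor v\rfloor+\{v\}^2$ appearing on the left is \emph{continuous} on $[1,\infty)$. First I would record the structure of $h$: on each interval $[b,b+1)$ one has $\lfloor v\rfloor=b$ and $\{v\}=v-b$, so $h(v)=b+(v-b)^2$; since $h((b+1)^{-})=b+1=h(b+1)$, the function $h$ is continuous on $[1,\infty)$, is piecewise quadratic (hence Lipschitz on every bounded interval), and satisfies $h'(v)=2\{v\}$ at every non-integer $v$. Also $h(1)=1$ and $h(\beta)=\lfloor\beta\rfloor+\{\beta\}^2$.

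The second step is a single integration by parts. Because $h$ is absolutely continuous on $[1,\beta]$ and $f'\in C^{1}$,
\begin{align*}
\int_1^\beta h(v)f''(v)\,dv&=h(\beta)f'(\beta)-h(1)f'(1)-\int_1^\beta h'(v)f'(v)\,dv\\
&=\bigl(\lfloor\beta\rfloor+\{\beta\}^2\bigr)f'(\beta)-f'(1)-2\int_1^\beta\{v\}f'(v)\,dv.
\end{align*}
(If one prefers to avoid the absolutely continuous form of integration by parts, the same identity follows by applying the classical formula on each full interval $[b,b+1]$ and on $[\lfloor\beta\rfloor,\beta]$ and summing: the interior boundary contributions $h(b)f'(b)$ telescope away because $h$ is continuous at the integers, leaving only the endpoints $v=1$ and $v=\beta$.)

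The third step evaluates $\int_1^\beta\{v\}f'(v)\,dv$ by splitting at the integers. With $N:=\lfloor\beta\rfloor$ and $\{v\}=v-b$ on $[b,b+1)$,
\[
\int_1^\beta\{v\}f'(v)\,dv=\sum_{b=1}^{N-1}\int_b^{b+1}(v-b)f'(v)\,dv+\int_N^\beta(v-N)f'(v)\,dv,
\]
and a one-line integration by parts on each piece gives $\int_b^{b+1}(v-b)f'(v)\,dv=f(b+1)-\int_b^{b+1}f(v)\,dv$ and $\int_N^\beta(v-N)f'(v)\,dv=\{\beta\}f(\beta)-\int_N^\beta f(v)\,dv$. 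Summing telescopes the $f$-integrals into $\int_1^\beta f(v)\,dv$ and collects the point values into $\sum_{b=2}^{\lfloor\beta\rfloor}f(b)$, so that
\[
\int_1^\beta\{v\}f'(v)\,dv=\sum_{b=2}^{\lfloor\beta\rfloor}f(b)+\{\beta\}f(\beta)-\int_1^\beta f(v)\,dv.
\]
Substituting this into the display from the second step and dividing by $2$ gives exactly \eqref{eqn:integral-calculation-EM}. I do not expect a serious obstacle here; the only points needing a little care are the justification of the first integration by parts across the corners of $h$ (handled by the Lipschitz property recorded above) and the bookkeeping of the empty-sum boundary cases $\beta\in[1,2)$ and $\beta\in\ZZ$, both of which one checks directly are consistent with the stated formula.
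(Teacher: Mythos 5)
Your argument is correct and delivers the identity in full. You should check, though, that your route is genuinely different from the paper's: the paper does not integrate by parts against the kernel $h(v)=\lfloor v\rfloor+\{v\}^2$ directly, but instead invokes the Euler--Maclaurin summation formula (quoted from Montgomery--Vaughan) to produce the term $\tfrac{1}{2}\int_1^\beta\left(\{v\}^2-\{v\}+\tfrac{1}{6}\right)f''(v)\,dv$, and then adds a separate integration by parts for $\tfrac{1}{2}\int_1^\beta\left(v-\tfrac{1}{6}\right)f''(v)\,dv$; these two sum to the left side precisely because $\lfloor v\rfloor+\{v\}^2=\left(v-\tfrac{1}{6}\right)+\left(\{v\}^2-\{v\}+\tfrac{1}{6}\right)$. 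Your approach is instead self-contained: you observe that $h$ is continuous across the integers with almost-everywhere derivative $2\{v\}$, do one integration by parts to drop to $\int_1^\beta\{v\}f'(v)\,dv$, and then evaluate that by splitting at the integers and integrating by parts a second time, with the $f$-integrals telescoping. The paper's route is quicker if one already has Euler--Maclaurin at hand; yours is more elementary, making the continuity of the kernel (which is the real reason the formula is clean) explicit rather than leaving it hidden in the coincidence of coefficients. Both give the same conclusion, and your handling of the edge cases ($\beta\in[1,2)$ giving empty sums, $\beta$ integral giving $\{\beta\}=0$) is sound.
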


\begin{proof}
By the Euler--Maclaurin summation formula (cf. \cite[Theorem~B.5]{MV07}),
\begin{align*}
\sum_{b=2}^{\lfloor\beta\rfloor}f(b)=&\int_1^\beta f(v)\,dv-\left(\{\beta\}-\frac{1}{2}\right)f(\beta)-\frac{1}{2}f(1)+\frac{1}{2}\left(\{\beta\}^2-\{\beta\}+\frac{1}{6}\right)f'(\beta)\\
&-\frac{1}{12}f'(1)-\frac{1}{2}\int_1^\beta\left(\{v\}^2-\{v\}+\frac{1}{6}\right)f''(v)\,dv.
\end{align*}
Rearranging the terms, we obtain
\begin{align}
\frac{1}{2}\int_1^\beta\left(\{v\}^2-\{v\}+\frac{1}{6}\right)f''(v)\,dv=&\int_1^\beta f(v)\,dv-\sum_{b=2}^{\lfloor\beta\rfloor}f(b)-\left(\{\beta\}-\frac{1}{2}\right)f(\beta)\nonumber\\
&+\frac{1}{2}\left(\{\beta\}^2-\{\beta\}+\frac{1}{6}\right)f'(\beta)-\frac{1}{2}f(1)-\frac{1}{12}f'(1).\label{eqn:Euler-Maclaurin-2}
\end{align}
On the other hand, we use integration by parts to see that
\begin{align}
\frac{1}{2}\int_1^\beta\left(v-\frac{1}{6}\right)f''(v)\,dv&=\frac{1}{2}\left(v-\frac{1}{6}\right)f'(v)\bigg|_{v=1}^\beta-\frac{1}{2}\int_1^\beta f'(v)\,dv\nonumber\\
&=-\frac{1}{2}f(\beta)+\frac{1}{2}\left(\beta-\frac{1}{6}\right)f'(\beta)+\frac{1}{2}f(1)-\frac{5}{12}f'(1).\label{eqn:by-parts}
\end{align}
Adding \eqref{eqn:Euler-Maclaurin-2} and \eqref{eqn:by-parts}, we obtain \eqref{eqn:integral-calculation-EM}.
\end{proof}

We apply Lemma \ref{lem:integral-calculation-EM} to evaluate the integrals on the right of \eqref{eqn:sec6-main-term-evaluation}.

%
%

\begin{lem}\label{lem:sec6-integral-eval2}
For all real numbers $\alpha$ such that $0<\alpha\le1$, we have
\begin{eqnarray}\label{eqn:oAnn-oAnx-integral-f-new}
\int_1^\frac{1}{\alpha}\frac{\lfloor v\rfloor+\{v\}^2}{v^3}\,dv&=&\frac{3}{2}-\left(H_{\lfloor\frac{1}{\alpha}\rfloor}-\log\frac{1}{\alpha}\right)-\alpha\left(\left\{\frac{1}{\alpha}\right\}+\frac{1}{2}\right)\nonumber\\
&&-\alpha^2\left(\frac{1}{2}\left\{\frac{1}{\alpha}\right\}^2-\frac{1}{2}\left\{\frac{1}{\alpha}\right\}\right),
\end{eqnarray}
\begin{eqnarray}\label{eqn:oAnn-oAnx-integral-g-new}
\int_1^\frac{1}{\alpha}\frac{\lfloor v\rfloor+\{v\}^2}{v^3}\log v\,dv&=&\frac{7}{4}-\frac{3}{2}\left(H_{\lfloor\frac{1}{\alpha}\rfloor}-\log\frac{1}{\alpha}\right)-\left(J\left(\frac{1}{\alpha}\right)-\frac{1}{2}\left(\log\frac{1}{\alpha}\right)^2\right)\nonumber\\
&&-\left(\alpha\log\frac{1}{\alpha}\right)\left(\left\{\frac{1}{\alpha}\right\}+\frac{1}{2}\right)-\alpha\left(\frac{3}{2}\left\{\frac{1}{\alpha}\right\}+\frac{1}{4}\right)\nonumber\\
&&-\left(\alpha^2\log\frac{1}{\alpha}\right)\left(\frac{1}{2}\left\{\frac{1}{\alpha}\right\}^2-\frac{1}{2}\left\{\frac{1}{\alpha}\right\}\right)\nonumber\\
&&-\alpha^2\left(\frac{1}{4}\left\{\frac{1}{\alpha}\right\}^2-\frac{1}{4}\left\{\frac{1}{\alpha}\right\}\right).
\end{eqnarray}
\end{lem}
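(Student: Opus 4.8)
The plan is to derive both identities as direct applications of Lemma \ref{lem:integral-calculation-EM}, taking $\beta = \frac{1}{\alpha}$ (note $\beta \ge 1$ since $0 < \alpha \le 1$) and choosing in each case a function $f$ whose second derivative reproduces, up to the factor $\frac12$ on the left of \eqref{eqn:integral-calculation-EM}, the weight appearing in the integrand. For \eqref{eqn:oAnn-oAnx-integral-f-new} I would take $f(v) = \frac{1}{v}$, which is $C^2$ on $[1,\infty)$ with $f'(v) = -v^{-2}$, $f''(v) = 2v^{-3}$, and $f'(1) = -1$; then the left side of \eqref{eqn:integral-calculation-EM} is exactly $\int_1^{1/\alpha} \frac{\lfloor v\rfloor + \{v\}^2}{v^3}\,dv$, while on the right side one computes $\int_1^{1/\alpha} f = \log\frac1\alpha$, $\sum_{b=2}^{\lfloor 1/\alpha\rfloor} f(b) = H_{\lfloor 1/\alpha\rfloor} - 1$, the boundary terms $\{\tfrac1\alpha\}f(\tfrac1\alpha) = \alpha\{\tfrac1\alpha\}$ and $\tfrac12(\lfloor\tfrac1\alpha\rfloor + \{\tfrac1\alpha\}^2)f'(\tfrac1\alpha) = -\tfrac{\alpha^2}{2}(\lfloor\tfrac1\alpha\rfloor + \{\tfrac1\alpha\}^2)$, and $\tfrac12 f'(1) = -\tfrac12$. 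Collecting these gives $\frac32 - (H_{\lfloor 1/\alpha\rfloor} - \log\frac1\alpha) - \alpha\{\tfrac1\alpha\} - \tfrac{\alpha^2}{2}(\lfloor\tfrac1\alpha\rfloor + \{\tfrac1\alpha\}^2)$, and the displayed form \eqref{eqn:oAnn-oAnx-integral-f-new} then follows by substituting $\alpha^2\lfloor\tfrac1\alpha\rfloor = \alpha - \alpha^2\{\tfrac1\alpha\}$ into the last term.

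For \eqref{eqn:oAnn-oAnx-integral-g-new} I would take $f(v) = \frac{\log v}{v} + \frac{3}{2v}$; a one-line computation shows $f'(v) = -\frac{\log v}{v^2} - \frac{1}{2v^2}$, $f''(v) = \frac{2\log v}{v^3}$, and $f'(1) = -\frac12$, and $f$ is again $C^2$ on $[1,\infty)$, so Lemma \ref{lem:integral-calculation-EM} applies and its left side becomes $\int_1^{1/\alpha}\frac{\lfloor v\rfloor + \{v\}^2}{v^3}\log v\,dv$. On the right side one evaluates $\int_1^{1/\alpha} f = \frac12(\log\frac1\alpha)^2 + \frac32\log\frac1\alpha$, $\sum_{b=2}^{\lfloor 1/\alpha\rfloor} f(b) = J_{\lfloor 1/\alpha\rfloor} + \frac32(H_{\lfloor 1/\alpha\rfloor} - 1)$ (using $J_1 = 0$, so the $\frac{\log b}{b}$ part sums to $J_{\lfloor 1/\alpha\rfloor} = J(\tfrac1\alpha)$), the boundary terms $\{\tfrac1\alpha\}f(\tfrac1\alpha)$ and $\tfrac12(\lfloor\tfrac1\alpha\rfloor + \{\tfrac1\alpha\}^2)f'(\tfrac1\alpha)$, and $\tfrac12 f'(1) = -\tfrac14$. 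Gathering the terms, grouping the $H$-part as $-\tfrac32(H_{\lfloor 1/\alpha\rfloor} - \log\frac1\alpha)$ and the $J$-part as $-(J(\tfrac1\alpha) - \tfrac12(\log\frac1\alpha)^2)$, and once more rewriting $\alpha^2\lfloor\tfrac1\alpha\rfloor$ as $\alpha - \alpha^2\{\tfrac1\alpha\}$ in the boundary contribution at $\beta = \tfrac1\alpha$, yields \eqref{eqn:oAnn-oAnx-integral-g-new}.

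Both steps are essentially mechanical; there is no genuine analytic obstacle once Lemma \ref{lem:integral-calculation-EM} is in hand. The only points requiring care are the correct choice of antiderivatives (in particular, the constant $\tfrac{3}{2v}$ in the second case, which collects the two constants of integration produced in passing from $f''$ to $f$ — note that any additive constant in $f$ cancels out of the right side of \eqref{eqn:integral-calculation-EM} by virtue of $\lfloor\beta\rfloor + \{\beta\} = \beta$), the bookkeeping of the $\frac12$-factor in \eqref{eqn:integral-calculation-EM}, and the final algebraic rearrangement converting the floor form delivered by the Euler--Maclaurin computation into the fractional-part form stated in the lemma. I expect this last piece of algebra to be the most error-prone, but not conceptually difficult.
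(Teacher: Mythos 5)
Your proposal is correct and coincides with the paper's own proof: both apply Lemma \ref{lem:integral-calculation-EM} with $\beta = \frac{1}{\alpha}$ and the same choices $f(v)=\frac{1}{v}$ and $f(v)=\frac{\log v}{v}+\frac{3}{2v}$, then substitute $\left\lfloor\frac{1}{\alpha}\right\rfloor=\frac{1}{\alpha}-\left\{\frac{1}{\alpha}\right\}$ and rearrange. Your computations of $f'$, $f''$, the integrals, the harmonic- and $J$-sums, and the boundary terms all check out, so this is essentially the paper's argument verbatim.
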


\begin{proof}
For \eqref{eqn:oAnn-oAnx-integral-f-new}, apply Lemma \ref{lem:integral-calculation-EM} with $f(v)=\frac{1}{v}$ and $\beta=\frac{1}{\alpha}$:
$$
\int_1^\frac{1}{\alpha}\frac{\lfloor v\rfloor+\{v\}^2}{v^3}\,dv=\log\frac{1}{\alpha}-\left(H_{\lfloor\frac{1}{\alpha}\rfloor}-1\right)-\alpha\left\{\frac{1}{\alpha}\right\}-\frac{1}{2}\alpha^2\left(\left\lfloor\frac{1}{\alpha}\right\rfloor+\left\{\frac{1}{\alpha}\right\}^2\right)+\frac{1}{2}.
$$
Replacing $\left\lfloor\frac{1}{\alpha}\right\rfloor+\left\{\frac{1}{\alpha}\right\}^2$ by $\frac{1}{\alpha}-\left\{\frac{1}{\alpha}\right\}+\left\{\frac{1}{\alpha}\right\}^2$ and rearranging the terms, we obtain \eqref{eqn:oAnn-oAnx-integral-f-new}.\\

For \eqref{eqn:oAnn-oAnx-integral-g-new}, apply Lemma \ref{lem:integral-calculation-EM} with $f(v)=\frac{3}{2v}+\frac{\log v}{v}$ and $\beta=\frac{1}{\alpha}$:
\begin{align*}
\int_1^\frac{1}{\alpha}\frac{\lfloor v\rfloor+\{v\}^2}{v^3}\log v\,dv=&\left(\frac{3}{2}\log\frac{1}{\alpha}+\frac{1}{2}\left(\log\frac{1}{\alpha}\right)^2\right)-\left(\frac{3}{2}H_{\lfloor\frac{1}{\alpha}\rfloor}+J\left(\frac{1}{\alpha}\right)-\frac{3}{2}\right)\\
&-\alpha\left\{\frac{1}{\alpha}\right\}\left(\frac{3}{2}+\log\frac{1}{\alpha}\right)-\alpha^2\left(\left\lfloor\frac{1}{\alpha}\right\rfloor+\left\{\frac{1}{\alpha}\right\}^2\right)\left(\frac{1}{4}+\frac{1}{2}\log\frac{1}{\alpha}\right)+\frac{1}{4}.
\end{align*}
Replacing $\left\lfloor\frac{1}{\alpha}\right\rfloor+\left\{\frac{1}{\alpha}\right\}^2$ by $\frac{1}{\alpha}-\left\{\frac{1}{\alpha}\right\}+\left\{\frac{1}{\alpha}\right\}^2$ and rearranging the terms, we obtain \eqref{eqn:oAnn-oAnx-integral-g-new}.
\end{proof}

%
%

\subsection{Proof of Theorem \ref{thm:oAnx}}
\label{subsec:63n}

We combine results in the previous subsection to obtain an estimate for $\oA(n,x)$ as stated in Theorem \ref{thm:oAnx}.

\begin{proof}[Proof of Theorem \ref{thm:oAnx}]
Combining Theorem \ref{thm:oAn} and Lemma \ref{lem:oAnn-oAnx-estimate}, which estimate $\oA(n,n)$ and $\oA(n,n)-\oA(n,x)$ respectively, we obtain an estimate for $\oA(n,x)$:
\begin{eqnarray*}
\oA(n,x)&=&\left(\frac{3}{2}-\gamma\right)n^2\log n+\left(\frac{3}{2}\gamma+\gamma_1-\frac{7}{4}\right)n^2\\
&&-\int_x^n\left(\left\lfloor\frac{n}{u}\right\rfloor+\left\{\frac{n}{u}\right\}^2\right)u\log u\,du+O\left(n^{3/2}\log n\right).
\end{eqnarray*}
The integral on the right can be evaluated using Lemma \ref{lem:sec6-integral-eval1}:
\begin{align*}
\oA(n,x)=&\,\left(\frac{3}{2}-\gamma\right)n^2\log n+\left(\frac{3}{2}\gamma+\gamma_1-\frac{7}{4}\right)n^2\\
&-n^2(\log n)\int_1^\frac{n}{x}\frac{\lfloor v\rfloor+\{v\}^2}{v^3}\,dv+n^2\int_1^\frac{n}{x}\frac{\lfloor v\rfloor+\{v\}^2}{v^3}\log v\,dv+O\left(n^{3/2}\log n\right)\\
=&\,\oA_0(n,x)n^2\log n+\oA_1(n,x)n^2+O\left(n^{3/2}\log n\right),
\end{align*}
where the functions $\oA_0(n,x)$ and $\oA_1(n,x)$ are given by
\begin{equation}\label{eqn:oA0-integral}
\oA_0(n,x):=\left(\frac{3}{2}-\gamma\right)-\int_1^\frac{n}{x}\frac{\lfloor v\rfloor+\{v\}^2}{v^3}\,dv,
\end{equation}
\begin{equation}\label{eqn:oA1-integral}
\oA_1(n,x):=\left(\frac{3}{2}\gamma+\gamma_1-\frac{7}{4}\right)+\int_1^\frac{n}{x}\frac{\lfloor v\rfloor+\{v\}^2}{v^3}\log v\,dv.
\end{equation}
It remains to show that \eqref{eqn:oA0-integral} is equivalent to \eqref{eqn:oA0-thm} and that \eqref{eqn:oA1-integral} is equivalent to \eqref{eqn:oA1-thm}. To that end, we apply Lemma \ref{lem:sec6-integral-eval2} with $\alpha=\frac{x}{n}$ to evaluate the integrals in \eqref{eqn:oA0-integral} and \eqref{eqn:oA1-integral}. 
We obtain 
$$
\oA_0(n,x) =\left( H_{\lfloor \frac{n}{x} \rfloor} - \log \frac{n}{x} -\gamma \right) 
+ \frac{x}{n} \left( \left\{ \frac{n}{x} \right\} + \frac{1}{2} \right) -\left(\frac{x}{n}\right)^2
\left( \frac{1}{2}\left\{\frac{n}{x}\right\}^2-\frac{1}{2}\left\{\frac{n}{x}\right\}\right),
$$
\begin{eqnarray*} 
\oA_1(n,x) &= &
-\frac{3}{2}\left(H_{\lfloor\frac{n}{x}\rfloor}-\log\frac{n}{x}-\gamma\right)
-\left(J\left(\frac{n}{x}\right)-\frac{1}{2}\left(\log\frac{n}{x}\right)^2 -\gamma_1\right)\nonumber\\
&&-\left(\frac{x}{n}\log\frac{n}{x}\right)\left(\left\{\frac{n}{x}\right\}+\frac{1}{2}\right)
-\frac{x}{n} \left(\frac{3}{2}\left\{\frac{n}{x}\right\}+\frac{1}{4}\right)\nonumber\\
&&-\left(\frac{x}{n}\right)^2\left(\log\frac{n}{x}\right)\left(\frac{1}{2}\left\{\frac{n}{x}\right\}^2
-\frac{1}{2}\left\{\frac{n}{x}\right\}\right)
-\left(\frac{x}{n}\right)^2\left(\frac{1}{4}\left\{\frac{n}{x}\right\}^2-\frac{1}{4}\left\{\frac{n}{x}\right\}\right).
\end{eqnarray*}
Replacing $\left\{\frac{n}{x}\right\}$ by $\frac{n}{x}-\left\lfloor\frac{n}{x}\right\rfloor$ and rearranging the terms, we obtain the formulas \eqref{eqn:oA0-thm} and \eqref{eqn:oA1-thm}.\\

Finally, for integers $n\ge2$ and real $x\in\left[1,\sqrt{n}\right]$, we have
\begin{align*}
\oA(n,x)&=\sum_{2\le b\le x} \frac{2}{b-1}S_b(n)\log b\\
&\le\sum_{2\le b\le x}n\log n <xn\log n\\
 &\le n^{3/2}\log n,
\end{align*}
where the bound of Lemma \ref{lem:dbn-Sbn-bound} for $S_b(n)$ was used in the first inequality.
We have obtained \eqref{eqn:oAnx-bound2}.
\end{proof}

%
%

\subsection{Proof of Theorem \ref{thm:oAnx-cor}}
\label{subsec:64n}

\begin{proof}[Proof of Theorem \ref{thm:oAnx-cor}] 
The result for the range $x\in [\sqrt{n}, n]$ 
follows from Theorem \ref{thm:oAnx} on choosing  $x= \alpha n$ and simplifying.
For the range $x\in \left[1, \sqrt{n}\right]$ we use the
final estimate \eqref{eqn:oAnx-bound2}.
\end{proof} 


\begin{rem}\label{rem:38}
The function $f_\oA(\alpha)$ has  $f_\oA(1)=\frac{3}{2} -\gamma$, and has  $\lim_{\alpha \to 0^+} f_\oA(\alpha)=0$   since $ H_{\lfloor \frac{1}{\alpha}\rfloor}- \log \frac{1}{\alpha}  \to \gamma$
as $\alpha \to 0^+$.
\end{rem}

%
%

\section{Estimates for partial factorizations $\oH(n,x)$}\label{sec:oHnx}

We deduce asymptotics of $\oH(n,x)$.

%
%

\begin{thm}\label{thm:oHnx}
Let $\oH(n, x) = \prod_{b=2}^{\lfloor x\rfloor} b^{\Gnu(n,b)}$. Then for all integers $n\ge2$ and real $x\in\left[\sqrt{n},n\right]$,
\begin{equation}\label{eqn:oHnx-main-two-var1} 
\log \oH(n, x ) = \oH_0(n,x)\,n^2\log n + \oH_1(n,x) n^2
+  O \left(n^{3/2}  \log n \right),
\end{equation}
where the functions $\oH_0(n,x)$ and $\oH_1(n,x)$ only depend on $\frac{x}{n}$ and are given by
\begin{equation}\label{eqn:oH0nx}
\oH_0(n,x) := \frac{1}{2}  +\frac{1}{2} \left(\frac{x}{n}\right)^2 \left\lfloor \frac{n}{x}\right\rfloor \left\lfloor \frac{n}{x}+1\right\rfloor
- \frac{x}{n} \left\lfloor \frac{n}{x} \right\rfloor
\end{equation}
and 
\begin{align}
\oH_1(n,x) := \left( \frac{1}{2} \gamma - \frac{3}{4} \right) - \frac{1}{2} \left( H_{\lfloor \frac{n}{x}\rfloor} - \log \frac{n}{x} \right)-\frac{1}{2}\log\frac{n}{x} -\frac{1}{2}\left(\log \frac{n}{x} \right) \left(\frac{x}{n}\right)^2 \left\lfloor \frac{n}{x} \right\rfloor \left\lfloor \frac{n}{x}+1 \right\rfloor  \nonumber\\
+\left(\log \frac{n}{x} \right)\frac{x}{n} \left\lfloor \frac{n}{x} \right\rfloor 
- \frac{1}{4} \left(\frac{x}{n}\right)^2 \left\lfloor \frac{n}{x} \right\rfloor \left\lfloor \frac{n}{x} +1\right\rfloor   + \frac{x}{n} \left\lfloor \frac{n}{x} \right\rfloor. \label{eqn:oH1nx}
\end{align}
Moreover, for all integers $n\ge2$ and real $x\in\left[1, \sqrt{n}\right]$,
\begin{equation} \label{eqn:oHnx-main-two-var2} 
\log \oH(n, x ) = O \left( n^{3/2}  \log n\right).
\end{equation}
\end{thm}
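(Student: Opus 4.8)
The plan is to derive Theorem~\ref{thm:oHnx} directly from the additive identity~\eqref{eqn:oHABx},
\[
\log \oH(n,x) = \oA(n,x) - \oB(n,x),
\]
which holds for all integers $n \ge 2$ and real $1 \le x \le n$ because it arises termwise on taking logarithms in the product~\eqref{eqn:Hnx-def} and substituting the radix formula~\eqref{eqn:gnu-b} for each $\Gnu(n,b)$. There is essentially no new analytic content: Theorem~\ref{thm:oAnx} and Theorem~\ref{thm:oBnx} already supply expansions for $\oA(n,x)$ and $\oB(n,x)$ with matching error terms $O(n^{3/2}\log n)$, so subtracting them produces the asserted asymptotics for $\log\oH(n,x)$.

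For the main range $x \in [\sqrt{n},n]$ I would insert $\oA(n,x) = \oA_0(n,x)\,n^2\log n + \oA_1(n,x)\,n^2 + O(n^{3/2}\log n)$ and the analogous expansion of $\oB(n,x)$ into the identity, so that the coefficients of the leading and secondary terms of $\log\oH(n,x)$ are $\oH_0(n,x) = \oA_0(n,x) - \oB_0(n,x)$ and $\oH_1(n,x) = \oA_1(n,x) - \oB_1(n,x)$, with error terms combining additively; these depend only on $x/n$ since the $\oA_i$ and $\oB_i$ do. It then remains to check that these two differences reduce to the closed forms~\eqref{eqn:oH0nx} and~\eqref{eqn:oH1nx}. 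In $\oA_0 - \oB_0$ the terms $H_{\lfloor n/x\rfloor} - \log(n/x)$ cancel, the constant collapses via $(\tfrac32-\gamma)-(1-\gamma)=\tfrac12$, and one is left only with the floor polynomials in $x/n$; in $\oA_1 - \oB_1$ the Stieltjes pieces $J_{\lfloor n/x\rfloor} - \tfrac12(\log(n/x))^2$ cancel entirely, the constant becomes $(\tfrac32\gamma+\gamma_1-\tfrac74)-(\gamma+\gamma_1-1)=\tfrac12\gamma-\tfrac34$, and collecting the remaining logarithmic and floor contributions yields~\eqref{eqn:oH1nx}. This is routine bookkeeping.

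For the complementary range $x \in [1,\sqrt{n}]$ I would add the two unconditional bounds~\eqref{eqn:oAnx-bound2} and~\eqref{eqn:oBnx-bound2}, each of size $O(n^{3/2}\log n)$, to obtain~\eqref{eqn:oHnx-main-two-var2}; alternatively one can bound $\log\oH(n,x) = \sum_{2\le b\le x}\Gnu(n,b)\log b$ directly, using nonnegativity of $\Gnu(n,b)$ from Theorem~\ref{thm:nub} together with $\Gnu(n,b) \le \tfrac{2}{b-1}S_b(n) \le \tfrac{n\log n}{\log b}$ from Lemma~\ref{lem:dbn-Sbn-bound}, so that $\Gnu(n,b)\log b \le n\log n$ and the sum over $2\le b\le\sqrt{n}$ is $O(n^{3/2}\log n)$. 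The only real obstacle is the algebraic verification that $\oH_0 = \oA_0 - \oB_0$ and $\oH_1 = \oA_1 - \oB_1$ simplify to the stated forms; there is no genuinely new estimate to carry out.
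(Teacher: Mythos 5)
Your proposal is correct and follows the paper's own proof essentially verbatim: the paper likewise invokes the identity $\log\oH(n,x)=\oA(n,x)-\oB(n,x)$ and plugs in the expansions from Theorems~\ref{thm:oAnx} and~\ref{thm:oBnx} for both ranges of $x$. Your sketch of the algebraic simplification of $\oA_0-\oB_0$ and $\oA_1-\oB_1$ is accurate (the $H_{\lfloor n/x\rfloor}-\log(n/x)$ and $J_{\lfloor n/x\rfloor}-\tfrac12(\log(n/x))^2$ pieces cancel or reduce exactly as you say), and the alternative direct bound for $x\in[1,\sqrt n]$ is also valid, though the paper simply cites the corresponding bounds from the $\oA$ and $\oB$ theorems.
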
 

\begin{proof}
Recall from \eqref{eqn:oHABx} the identity 
$$
\log \oH(n, x) = \oA(n, x) - \oB(n,x).
$$
The result  \eqref{eqn:oHnx-main-two-var1}
follows for the range $x \in [\sqrt{n}, n]$  by inserting  the formulas \eqref{eqn:oAnx-bound} in Theorem \ref{thm:oAnx}   and \eqref{eqn:oBnx-bound} in Theorem \ref{thm:oBnx}. 
The formula \eqref{eqn:oHnx-main-two-var2} in the range $x \in \left[1, \sqrt{n}\right]$ follows from the
corresponding range bounds in Theorems \ref{thm:oAnx} and \ref{thm:oBnx}.  
\end{proof}

%
%

\subsection{Proof of Theorem \ref{thm:oHnx-main}} \label{subsec:81}

\begin{proof}[Proof of Theorem \ref{thm:oHnx-main}]
The theorem follows from Theorem \ref{thm:oHnx}  on choosing $x=\alpha n$ and simplifying.
The $O$-constant  in the remainder term is absolute for the range $\frac{1}{\sqrt{n}}  \le \alpha \le 1.$ 
Here $\frac{n}{x}= \frac{1}{\alpha}$. 
\end{proof}

%
%

\subsection{Properties of the limit scaling function $g_{\oH}(\alpha)$} \label{subsec:82}

%
%

\begin{lem}\label{lem:93}
Let $g_{\oH} (\alpha)$  be the  function 
 defined for all $0 < \alpha \le1$ by
\begin{eqnarray*}
g_{\oH}(\alpha) &=&  \bigg( \frac{1}{2} \gamma - \frac{3}{4} \bigg) -\frac{1}{2}\bigg( H_{\lfloor\frac{1}{\alpha}\rfloor}- \log \frac{1}{\alpha} \bigg) 
 + \bigg(\log \frac{1}{\alpha} \bigg)\bigg( -\frac{1}{2} - \frac{1}{2} \alpha^2 \bigg\lfloor \frac{1}{\alpha} \bigg\rfloor \bigg\lfloor \frac{1}{\alpha} +1\bigg\rfloor + \alpha \bigg\lfloor \frac{1}{\alpha} \bigg\rfloor \bigg)  
 \nonumber \\
&&- \frac{1}{4} \alpha^2 \bigg\lfloor \frac{1}{\alpha} \bigg\rfloor \bigg\lfloor \frac{1}{\alpha} +1\bigg\rfloor + \alpha \bigg\lfloor \frac{1}{\alpha} \bigg\rfloor.\label{eqn:oHnx-parametrized-3}
\end{eqnarray*}
It has the following properties:
\begin{enumerate}
\item[\emph{(1)}]
$g_{\oH}(\alpha)$ continuously extends to the domain $[0,1]$, taking the value 
$g_{\oH}(0)=0$.
\item[\emph{(2)}]
Let $\tilde{g}(\alpha)$ be defined for all $0 < \alpha \le 1$ by
\begin{equation}\label{eqn:g-tilde-def}
\tilde{g}(\alpha) = - \frac{1}{2} \psi\left(\frac{1}{\alpha}+1\right) + \frac{1-\alpha}{2}\log \frac{1}{\alpha} - \frac{\alpha}{4},
\end{equation}
where $\psi(x) = \frac{\Gamma'}{\Gamma}(x)$ is the digamma function.  It is real-analytic on $(0,1]$ and
continuously extends to $[0,1]$, taking the value $\tilde{g}(0)=0$.
It  interpolates $g_{\oH}(\alpha)$ at reciprocal integer values; i.e., $\tilde{g}\left(\frac{1}{j}\right)=g_{\oH}\left(\frac{1}{j}\right)$ for all integers $j \ge 1$. 
\item[\emph{(3)}]
The function $\tilde{g}(\alpha)$ is strictly convex on $0 \le \alpha \le 1.$
The function $g_{\oH}(\alpha)$ is strictly concave on each interval $I_j=\left[\frac{1}{j+1}, \frac{1}{j}\right]$
for each integer $j \ge 2$.
\item[\emph{(4)}]
We have
\begin{equation}\label{eqn:ineq-tilde-g}
\tilde{g}(\alpha) \le g_{\oH}(\alpha) \quad \mbox{for all} \quad  0 \le \alpha \le 1.
\end{equation}

\end{enumerate}
\end{lem}

\begin{proof}
(1) Let $j\ge1$ be an integer, and suppose that $\frac{1}{j+1}<\alpha\le\frac{1}{j}$. Then $\left\lfloor\frac{1}{\alpha}\right\rfloor=j$, and
\begin{eqnarray}
g_{\oH}(\alpha) &=& \bigg(\frac{1}{2}\gamma-\frac{3}{4}\bigg)-\frac{1}{2}\bigg(H_j-\log\frac{1}{\alpha}\bigg)+\bigg(\log\frac{1}{\alpha}\bigg)\bigg(-\frac{1}{2}-\frac{1}{2}\alpha^2j(j+1)+\alpha j\bigg)\nonumber\\
&&-\frac{1}{4}\alpha^2j(j+1)+\alpha j.\label{eqn:oHnx-parametrized-3-j}
\end{eqnarray}
We first show the continuity of $g_{\oH}(\alpha)$ at transition points $\alpha=\frac{1}{2},\frac{1}{3},\frac{1}{4},\dots$. By \eqref{eqn:oHnx-parametrized-3-j}, it follows that
\begin{equation}\label{eqn:g_oH(1/j)}
\lim_{\varepsilon\rightarrow0^+}g_{\oH}\left(\frac{1}{j}-\varepsilon\right)=g_{\oH}\left(\frac{1}{j}\right)=-\frac{1}{2}\left(-\gamma+H_j\right)+\bigg(\frac{1}{2}-\frac{1}{2j}\bigg)\log j-\frac{1}{4j}
\end{equation}
and that
$$
\lim_{\varepsilon\rightarrow0^+}g_{\oH}\left(\frac{1}{j+1}+\varepsilon\right)=-\frac{1}{2}\left(-\gamma+H_j\right)+\frac{j}{2(j+1)}\log(j+1)-\frac{3}{4(j+1)}.
$$
Replacing $j$ by $j-1$ and using the identity $H_{j-1}=H_j-\frac{1}{j}$, we obtain
$$
\lim_{\varepsilon\rightarrow0^+}g_{\oH}\left(\frac{1}{j}+\varepsilon\right)=-\frac{1}{2}\left(-\gamma+H_j\right)+\bigg(\frac{1}{2}-\frac{1}{2j}\bigg)\log j-\frac{1}{4j}=g_{\oH}\left(\frac{1}{j}\right).
$$
Thus $g_{\oH}(\alpha)$ is continuous at $\alpha=\frac{1}{j}$ for all integers $j\ge2$.

By \eqref{eqn:oHnx-parametrized-3-j}, we see that $g_{\oH}(\alpha)$ is continuous on $\left(\frac{1}{j+1},\frac{1}{j}\right]$ for each integer $j\ge1$. So it remains to show that $\lim_{\alpha\rightarrow0^+}g_{\oH}(\alpha)=0$. Using the identity $\left\lfloor\frac{1}{\alpha}\right\rfloor=\frac{1}{\alpha}-\left\{\frac{1}{\alpha}\right\}$, we may rewrite $g_{\oH}(\alpha)$ as
\begin{eqnarray}
g_{\oH}(\alpha)&=&-\frac{1}{2}\bigg(H_{\lfloor\frac{1}{\alpha}\rfloor}-\log\frac{1}{\alpha}-\gamma\bigg)+\bigg(-\frac{1}{2}+\frac{1}{2}\left\{\frac{1}{\alpha}\right\}\alpha-\frac{1}{2}\left\{\frac{1}{\alpha}\right\}^2\alpha\bigg)\alpha\log\frac{1}{\alpha}\nonumber\\
&&-\frac{1}{4}\alpha-\frac{1}{2}\left\{\frac{1}{\alpha}\right\}\alpha+\frac{1}{4}\left\{\frac{1}{\alpha}\right\}\alpha^2-\frac{1}{4}\left\{\frac{1}{\alpha}\right\}^2\alpha^2,\label{eqn:oHnx-parametrized-3-fractional}
\end{eqnarray}
It follows that that $\lim_{\alpha\rightarrow0^+}g_{\oH}(\alpha)=0$, noting that
$H_{\lfloor\frac{1}{\alpha}\rfloor}-\log\frac{1}{\alpha}-\gamma \to 0$ as $\alpha \to 0^{+}$ by Lemma \ref{lem:21n}.

(2) The function $\tilde{g}(\alpha)$ is real-analytic on the interval $(0,1]$. 
Let $j\ge1$ be an integer. We first show that $\tilde{g}(\alpha)$ interpolates $g_{\oH}(\alpha)$ at $\alpha=\frac{1}{j}$. It is known that
\begin{equation}
\psi(j+1)=-\gamma+H_j,
\end{equation}
where $H_j$ is the $j$-th harmonic number; see, e.g., \cite[(5.4.14)]{NIST:DLMF} and \cite[(6.3.2)]{AS:64}. Substituting $\alpha=\frac{1}{j}$ in \eqref{eqn:g-tilde-def} and using \eqref{eqn:g_oH(1/j)}, we obtain
$$
\tilde{g}\left(\frac{1}{j}\right)=-\frac{1}{2}\left(-\gamma+H_j\right)+\bigg(\frac{1}{2}-\frac{1}{2j}\bigg)\log j-\frac{1}{4j}=g_{\oH}\left(\frac{1}{j}\right).
$$

Now, we show that $\lim_{\alpha\rightarrow0^+}\tilde{g}(\alpha)=0$. It is known that
$$
\psi(z+1)=\psi(z)+\frac{1}{z}=\log z+O\left(\frac{1}{z}\right)\quad\text{as}\quad z\rightarrow+\infty;
$$
see, e.g., \cite[(5.5.2)]{NIST:DLMF} and \cite[(5.11.2) \& \S5.11(ii)]{NIST:DLMF}. It follows from \eqref{eqn:g-tilde-def} that
$$
\tilde{g}(\alpha)=-\frac{1}{2}\alpha\log\frac{1}{\alpha}+O(\alpha)\quad\text{as}\quad\alpha\rightarrow0^+.
$$
So $\tilde{g}(\alpha)$ tends to $0$ as $\alpha\rightarrow0^+$.

(3) We first prove the convexity of $\tilde{g}(\alpha)$. Consider $0<\alpha<1$. From \eqref{eqn:g-tilde-def}, it follows that,
with $'$ denoting $\frac{d}{d\alpha}$ on the left, 
$$
\tilde{g}''(\alpha)=-\frac{1}{\alpha^3}\psi'\left(\frac{1}{\alpha}+1\right)-\frac{1}{2\alpha^4}\psi''\left(\frac{1}{\alpha}+1\right)+\frac{1}{2\alpha^2}+\frac{1}{2\alpha},
$$
but by convention $\psi'(z) := \frac{d}{dz} \psi(z)$ and $\psi''(z) := \frac{d^2}{dz^2} \psi(z)$
on the right side. 
The polygamma functions $\psi'$ and $\psi''$ can be expressed in partial fractions as
$$
\psi'\left(z\right)=\sum_{k=0}^\infty\frac{1}{\left(z+k\right)^2}\quad\text{and}\quad\psi''\left(z\right)=
-2\sum_{k=0}^\infty\frac{1}{\left(z+k\right)^3},
$$
with $'$ denoting $\frac{d}{dz}$; see, e.g., 
\cite[(6.4.10)]{AS:64}. So, with $'$ denoting $\frac{d}{d\alpha}$, taking $z= \frac{1}{\alpha}+1$, we obtain
$$
\tilde{g}''(\alpha)=-\frac{1}{\alpha^3}\sum_{k=1}^\infty\frac{k}{\left(\frac{1}{\alpha}+k\right)^3}+\frac{1}{2\alpha^2}+\frac{1}{2\alpha}.
$$
The following inequality is proved in \cite[Proposition 4.18]{BE:24}:
\begin{equation}\label{eqn:BE-ineq}
\sum_{k=1}^\infty\frac{k}{(a+k)^3}<\frac{1}{2a}\quad\text{for all}\quad a>0.
\end{equation}
Applying \eqref{eqn:BE-ineq} with $a=\frac{1}{\alpha}$ yields
$$
\tilde{g}''(\alpha)>\frac{1}{2\alpha}>0.
$$
So $\tilde{g}(\alpha)$ is convex on $[0,1]$.

Now, we prove that $g_{\oH}(\alpha)$ is concave on $I_j=\left[\frac{1}{j+1},\frac{1}{j}\right]$ for each integer $j\ge2$. Fix an integer $j\ge2$, and consider $\frac{1}{j+1}<\alpha<\frac{1}{j}$. Then \eqref{eqn:oHnx-parametrized-3-j} holds. Differentiating twice yields
$$
g_{\oH}''(\alpha)=j\left(j+1-\frac{1}{\alpha}-(j+1)\log\frac{1}{\alpha}\right),
$$
from which we observe that $g_{\oH}''(\alpha)$ is increasing. Since $j\ge2$, it follows that
$$
g_{\oH}''(\alpha)\le\lim_{\varepsilon\rightarrow0^+}g_{\oH}''\left(\frac{1}{j}-\varepsilon\right)=j(1-(j+1)\log j)\le j(1-3\log2)<0.
$$
So $g_{\oH}(\alpha)$ is concave on $I_j$ for each $j\ge2$. One may check $g_{\oH}(\alpha)$ is not                                                                                                                                                                                                                                                                                                                                                     concave on $I_1$, it has an inflection point.

(4) The inequality \eqref{eqn:ineq-tilde-g} on intervals $I_j$ for all $j\ge2$ follows from item (3). The functions $g_{\oH} (\alpha)$ and $\tilde{g}(\alpha)$ agree at the endpoints of the interval $I_j$ by  item (2).
The convexity of   $\tilde{g}(\alpha)$ on $I_j$ 
says its graph lies on or below the line segment connecting the endpoints $\alpha= \frac{1}{j+1}$ and $\alpha= \frac{1}{j}$ in its graph, 
while the concavity of $g_{\oH}(\alpha)$
puts it on or above this line segment.

The inequality \eqref{eqn:ineq-tilde-g} holds at $\alpha=0$, since $\tilde{g}(0)=0=g_{\oH}(0)$. It also holds at $\alpha=1$, by item (2). So it suffices to prove the inequality assuming that $\frac{1}{2}<\alpha<1$. It follows from \eqref{eqn:oHnx-parametrized-3-j} that
$$
g_{\oH}(\alpha)=-\frac{1}{2}\alpha^2+\alpha+\frac{1}{2}\gamma-\frac{5}{4}+\left(-\alpha^2+\alpha\right)\log\frac{1}{\alpha}.
$$
Subtracting by \eqref{eqn:g-tilde-def} yields
\begin{equation}\label{eqn:diff-g(alpha)}
g_{\oH}(\alpha)-\tilde{g}(\alpha)=\frac{1}{2}\psi\left(\frac{1}{\alpha}+1\right)-\frac{1}{2}\alpha^2+\frac{5}{4}\alpha+\frac{1}{2}\gamma-\frac{5}{4}+\left(-\alpha^2+\frac{3}{2}\alpha-\frac{1}{2}\right)\log\frac{1}{\alpha}.
\end{equation}
In \cite[Theorem 7]{Alzer:97}, the following inequality is given:
\begin{equation}\label{eqn:Alzer-ineq}
\psi(a+1)-\psi(a+s)>\frac{1-s}{a+s}\quad\text{for all}\quad a>0\quad\text{and}\quad0<s<1. 
\end{equation}
Applying \eqref{eqn:Alzer-ineq} with $a=\frac{1}{\alpha}$ and $s=2-\frac{1}{\alpha}$ and using $\psi(2)=-\gamma+1$, we obtain
$$
\psi\left(\frac{1}{\alpha}+1\right)>-\gamma+\frac{1}{2}+\frac{1}{2\alpha}.
$$
Putting this inequality in \eqref{eqn:diff-g(alpha)} yields
\begin{align*}
g_{\oH}(\alpha)-\tilde{g}(\alpha)&>-\frac{1}{2}\alpha^2+\frac{5}{4}\alpha-1+\frac{1}{4\alpha}+\left(-\alpha^2+\frac{3}{2}\alpha-\frac{1}{2}\right)\log\frac{1}{\alpha}\\
&=(1-\alpha)\left(\alpha-\frac{1}{2}\right)\left(\frac{1}{2}-\frac{1}{2\alpha}+\log\frac{1}{\alpha}\right)\\
&>0,
\end{align*}
where the last inequality is true because $\log(1+t)>\frac{1}{2}t$ for all $0<t<1$, which is \eqref{eqn:ineq-tilde-g}.
\end{proof}

%
%

\section{Concluding remarks}\label{sec:8}

\begin{enumerate}
\item[(1)]
 One may view  the general definition of generalized binomial products \eqref{eqn:oHn0}  as a kind of  discrete (multiplicative)
 ``integration"  operation; after taking logarithms, we sum over  terms indexed by $b \ge2$. The 
 smoothing aspect of this  operation  is evident in the 
 remainder term estimate: there are 
  of unconditional estimates giving a power-savings remainder term; the Riemann hypothesis is
  not needed.  We do not know of an  analogous notion of  multiplicative ``differencing"  that 
  recovers the sequence $\G_n$ from the sequence $\oH_n$.   One may need to know many  quantities beyond the sequence $\oH_n$
  to obtain a differencing-type reconstruction of the sequence $\G_n$.
 \item[(2)]
It may be possible to  iterate the process of defining generalized binomial coefficients
by  first completely factorizing $\oH_n$ into its prime factorization:
\begin{equation} \label{eqn:oH-gen}
\oH_n  = \prod_p p^{\ord_p(\oH_n)}.
\end{equation}
The resulting quantities  $\ord_p(\oH_n)$ may have 
(more complicated)  interpretations in terms of radix expansions. 
If so,   we may hope to define by analogy another
 product that computes the analogous statistics for all bases $b$
on the right side of \eqref{eqn:oH-gen}.
\item[(3)]
(Limit scaling functions)
A large class of limit scaling functions may occur 
in problems of this sort,  generalizing the limit function $f_{G}(\alpha)$
in  \cite{DL:22}, which equals $f_{\oH}(\alpha)$. This paper 
exhibited a new  limit scaling functions $g_{\oH}(\alpha)$.
It may be of interest to determine  and characterize the class of of such  scaling functions 
obtainable by iterated integral  constructions of this kind.
\item[(4)] 
It is an interesting question to heuristically account for the appearance and form of the secondary
limit scaling function $g_{\oH}( \alpha)$. We do not treat this question here,
but think the form of this function encodes information on
the distribution on the spacing of gaps between primes. 
\item[(5)]
We consider the problem of recovering $G(n,x)$ from $\oH(n,x)$, allowing $x$ to vary. 
The values  $\oH(n, x)$ for $1 \le x \le n$ contain full information 
about the partial binomial products $G(n,x)$ (which form a subset of terms of the product). 
We have $\frac{ \oH(n, b)}{\oH(n, b-1)} = b^{\Gnu(n,b)}$, 
and since $G(n,x)= \prod_{p \le x} p^{\Gnu(n,p)}$,
we have
$$
G(n, x) = \prod_{p \le x} \frac{ \oH(n, p)}{\oH(n, p-1)}.
$$
We raise the question of finding more natural ``inversion formulas" connecting $G(n,x)$ and $\oH(n,x)$, 
 without introducing primes, perhaps using all  
$\oH(m, y)$ for $1 \le y \le m \le n$. (Note that $\oH(m, y) = \oH(m,m)$ whenever $y \ge m$.)
Perhaps  additional extended binomial partial products must be introduced, which 
consider other extended binomial coefficients (as in Section \ref{subsubsec:212})
 associated to restricted sets $\sT \subseteq \NN$,
such as $\sT$ in arithmetic progressions.
 \item[(6)]
In a related paper (\cite{LY:24c}) two of the authors study behavior of the extended factorials
$n!_{\ZZ, \NN}$.  There is a smooth asymptotic formula for $\log n!_{\ZZ, \NN}$,
analogous to the initial terms of Stirling's formula.  It  implies for fixed $\alpha$  the
growth of (scaled)  individual binomial coefficients 
$\log \binom{n}{\lfloor \alpha n\rfloor}_{\ZZ, \NN}$
 has asymptotic growth with a smooth main term, and with a remainder term  $O \left( n^{3/2}\right)$. 
\end{enumerate}

\medskip

\noindent{\bf Acknowledgements}\\
Theorems \ref{thm:oBn}  and \ref{thm:oAn} were first established in Chapter 3.6 
of the PhD thesis of the first author (\cite{Du:2020}),
using \eqref{eqn:oHn0} as the definition.  The first author  thanks  Trevor Wooley for helpful comments. 
 The authors thank D. Harry Richman for
helpful comments, and for earlier versions of plots of the scaling functions.

\medskip

\section*{Declarations}

\noindent{\bf Funding}\\
Work of the first author was  partially supported by NSF-grant DMS-1701577.
Work of the  second author was partly supported by NSF-grant  DMS-1701576 and
 a Simons  Fellowship in Mathematics in 2019. Work of the third author was
partially supported by  NSF-grants DMS-1701576 and DMS-1701577.\medskip

%
%

\end{document}